\documentclass[oneside,12pt]{article}
\usepackage[english]{babel}
\usepackage{amssymb}
\usepackage{latexsym}
\usepackage{graphics}
\usepackage{graphicx}
\RequirePackage{latexsym}
\RequirePackage{amsthm}
\RequirePackage{amsmath}
\RequirePackage{amssymb}
\RequirePackage{makeidx}
\usepackage{amsthm}

\pagestyle{myheadings}
\usepackage{fancyhdr}
\pagestyle{fancy}

\fancyhf{} 
\fancyhead[LE,RO]{\bfseries\thepage}
\fancyhead[LO]{\bfseries\rightmark}
\fancyhead[RE]{\bfseries\leftmark}

\addtolength{\headheight}{0.5pt} 
\fancypagestyle{plain}{%
	\fancyhead{} 
}

\usepackage[utf8x]{inputenc}
\usepackage{latexsym}
\usepackage{amssymb}
\usepackage{amsmath}
\usepackage{amsfonts}
\usepackage{amsthm}
\usepackage{fontenc}
\usepackage{paralist}
\usepackage{enumerate}
\usepackage{mathrsfs}
\usepackage[retainorgcmds]{IEEEtrantools}
\usepackage[usenames,dvipsnames]{color} 
\usepackage{float} 

\usepackage{graphicx}
\theoremstyle{definition}
\newtheorem{defin}{Definition}[section]
\newtheorem{prop}[defin]{Proposition}

\newtheorem{theorem}[defin]{Theorem}

\newtheorem{lemma}[defin]{Lemma}
\newtheorem{cor}[defin]{Corollary}

\newtheorem*{remark}{Remark}

\theoremstyle{plain} \newtheorem{exa}[defin]{Example}


\usepackage{graphicx}

\usepackage[colorinlistoftodos, textwidth=2.6cm, obeyDraft]{todonotes} 
\usepackage[T1]{fontenc}
\usepackage{epigraph}



\makeindex


\begin{document}
\title{{Irreducible laminations for IWIP Automorphisms of a free product and Centralisers}}	
	
\author{Dionysios Syrigos}
\date{}
\maketitle

	\begin{abstract}
	For every free product decomposition $G = G_{1} \ast ...\ast G_{q} \ast F_{r}$, where $F_r$ is a finitely generated free group, of a group $G$ of finite Kurosh rank, we can associate some (relative) outer space $\mathcal{O}$. In this paper, we develop the theory of (stable) laminations for (relative) irreducible with irreducible powers (IWIP) automorphisms. In particular, we examine the action of $Out(G, \mathcal{O}) \leq Out(G)$ (i.e. the automorphisms which preserve the set of conjugacy classes of $G_i$'s) on the set of laminations. We generalise the theory of the attractive laminations associated to automorphisms of finitely generated free groups. The strategy is the same as in the classical case (see \cite{BFM}), but some statements are slightly different because of the existence of the $G_i$'s. More precisely, we prove that the stabiliser of the lamination of a relative IWIP is a $\mathbb{Z}$-extension of a subgroup that is consisted of virtually elliptic automorphisms. Note that in the free case, virtually elliptic automorphisms are exactly the finite order automorphisms of $Out(F_n)$.\\
	As a corollary of the previous theorem, we generalise the fact that the centraliser of an IWIP automorphism of a free group, is virtually cyclic. As a direct corollary, if $Out(G)$ is virtually torsion free and every $Aut(G_i)$ is finite, we prove that the centraliser of an IWIP is virtually cyclic. Finally, we give an example which shows that we cannot expect that in general the centraliser of a relative IWIP (and as a consequence the stabiliser of its stable lamination) is virtually cyclic.
	
	\end{abstract}

		\newpage
	\tableofcontents
	\newpage
\section{Introduction}
\par
Let $G$ be a group which splits as a free product $G = G_{1} \ast ...\ast G_{q} \ast F_{r}$. Guirardel and Levitt in \cite{GF} constructed an outer space relative to any free product decomposition for a f.g. group. Let $Out(G, \mathcal{O})$ be the subgroup of $Out(G)$, which consists of the automorphisms which preserve the conjugacy classes of $G_i$'s (note that in the case of the Grushko decomposition, $Out(G) = Out(G, \mathcal{O})$).
We could define the notion of irreducibility using representatives of automorphisms which leave invariant subgraphs, but here it is less complicated to use the notion of free factor systems. 
More specifically, we say that an element $ \phi \in Out(G, \mathcal{O}) $ is irreducible relative to $\mathcal{O}$, if the corresponding free factor system $\mathcal{G} = \{ [G_i] : 1 \leq i  \leq q \}$ is a maximal proper, $\phi$-invariant free factor system. Therefore we define the notion of an irreducible with irreducible powers (or simply IWIP) automorphism relative to $\mathcal{O}$, as in the special case where $G$ is a finitely generated free group.\par
In this paper, we study IWIP automorphisms and in particular we show that we can define the stable (and unstable) lamination $\Lambda$ associated to an IWIP, using exactly the same method as in the free case. In the classical case, it can be proved that the stabiliser of the lamination is virtually cyclic (see \cite{BFM}).
However, in the general case, the presence of the $G_i$'s, does not allow us to get the same statement and as we will see later this is not true in general, but can prove the following generalisation (note that if $\phi \in Out(F_n)$ fixes a point of $CV_n$, it has finite order):

\begin{theorem}
Let $\phi$ be an IWIP relative to some relative outer space $\mathcal{O}$.
Let's denote by $Stab( \Lambda ) $ the $Out(G,\mathcal{O})$- stabiliser of the stable lamination $\Lambda$.
Then $Stab(\Lambda)$ is an infinite cyclic extension of a subgroup $A$, where every $\phi \in A$ virtually fixes a point of $\mathcal{O}$.
\end{theorem}
Note that the stabilisers on any point of $\mathcal{O}$ is well known by a theorem of \cite{GF} and it depends on the valence of non-free vertices of the points and on the $G_i$'s.\\
As a consequence of our proof, we can get the following theorems for some special cases:
\begin{theorem}
If we suppose that every $Inn(G_i)$ is finite, then we have that:
	\begin{enumerate}
		\item There is a normal periodic subgroup $A$ of $Stab(\Lambda)$, such that the group $Stab(\Lambda) / A$ has a normal subgroup $B$ isomorphic to subgroup of $\bigoplus \limits_{i=1} ^{q} Out(G_i)$ and $(Stab(\Lambda) / A) / B$ is isomorphic to $\mathbb{Z}$.
		\item Let's suppose that $Out(G)$ is virtually torsion free. Then $Stab(\Lambda)$ has a (torsion free) finite index subgroup $K$ such that $K / B'$ is isomorphic to $\mathbb{Z}$, where $B'$ is a normal subgroup of $K$ isomorphic to subgroup of $\bigoplus \limits _{i=1} ^{q} Out(G_i)$.
	\end{enumerate}
\end{theorem}
Similarly, it can be proved the following:
\begin{theorem}
	If every $Aut(G_i)$ is finite and $Out(G)$ is virtually torsion free, then $Stab(\Lambda)$ is virtually infinite cyclic.
\end{theorem}
As a further motivation to study the laminations for this general case is that we can find the notion of laminations for a free group in a lot of different forms and contexts in the literature and study of them implies important results (for example, see \cite{BFM} \cite{BFMH1}, \cite{BFMH2}, \cite{CHL1}, \cite{CHL2} and \cite{HL}), therefore it looks like interesting to generalise this notion in a more general context. In addition, further motivation is that we can find natural generalisations for a lot of facts about $CV_n$ in the general case, for example in $\cite{FM}$, Francaviglia and Martino  generalised a lot of tools like train track maps and the Lipschitz metric. But there are some recent papers that show we can also use further methods of studying $Out(Fn)$ for $Out(G)$ (where $G$ is written as free product as above) such that the closure of outer space, the Tits alternatives for $Out(G)$, the hyperbolic complex corresponding to $Out(G)$ and the asymmetry of the outer space of a free product (see \cite{H1}, \cite{H2}, \cite{H3}, \cite{K2} and \cite{S1}). Finally, the author as an application of the results of the present paper generalises a result of Hilion (\cite{Hi}), about the stabiliser of attractive fixed point of an IWIP automorphism (\cite{S2}).\par

Given a group $G$ and an element $g \in G,$ a natural question is to study the centraliser $C(g)$ of $g$ in $G$. In several classes of groups, centralisers of elements are reasonably well-understood and sometimes they are useful to the study of the group. For example, Feighn and Handel in \cite{FH} classified abelian subgroups in $Out(F_n)$ by studying centralisers of elements.
Moreover, a well known result for an IWIP automorphism of a free groups (there are several proofs, see \cite{BFM}, \cite{L} or \cite{KL1}) states that their centralisers are virtually cyclic. Again, it's not true for a relative IWIP, but in the general case we can get similar statements like the theorems above and namely:
\begin{theorem}
	Let $\phi$ be an IWIP as above. Let's denote by $C(\phi)$ the centraliser of $\phi$ in $Out(G, \mathcal{O})$.
	Then there is normal subgroup $A$ of $C(\phi)$, where $C(\phi)$ a cyclic extension of $A$. Moreover, every $\phi \in A$ virtually fixes a point of $\mathcal{O}$.
\end{theorem}

Moreover, in the special case that the inner automorphisms groups of the factors are finite, we get:

 \begin{theorem}
 	If we suppose that every $Inn(G_i)$ is finite, then we can get the following:
 	\begin{enumerate}
 		
 		\item There is a normal periodic subgroup $A_1$ of $C(\phi)$, such that the group $C(\phi) / A_1$ has a normal subgroup $B_1$ isomorphic to subgroup of $\bigoplus \limits_{i=1} ^{q} Out(G_i)$ and $(C(\phi) / A_1) / B_1$ is isomorphic to $\mathbb{Z}$.
 		\item Let's also suppose that $Out(G)$ is virtually torsion free. Then $C(\phi)$ has a (torsion free) finite index subgroup $A_1 '$ such that $A_1 ' / B_1$ is isomorphic to $\mathbb{Z}$, where $B_1$ is a normal subgroup of $A_1$ isomorphic to subgroup of $\bigoplus \limits _{i=1} ^{q} Out(G_i)$.

 	\end{enumerate}
 \end{theorem}
 Also, the following theorem holds (and it is a direct generalisation of the theorem of the free case):
 \begin{theorem}
 	If every $Aut(G_i)$ is finite and $Out(G)$ is virtually torsion free, then $Stab(\Lambda)$ is virtually infinite cyclic.
 \end{theorem}
 
In fact, we can get stronger results for commensurators of the automorphisms instead of centralisers.\\
Note that there are a lot of IWIP automorphisms that don't commute with the factor automorphisms of $G_i$'s and in particular for them, the theorem above implies that their centralisers are virtually cyclic.\\
On the other hand, there are examples of IWIP automorphisms which have big centraliser (in particular, they are not virtually cyclic).\\
\begin{exa}
 We fix the free product decomposition $G = G_1 \ast <b_1> \ast <b_2>$, where $b_i$ are of infinite order and we denote by $F_2 = <b_1> \ast <b_2>$ the "free part". Then in the corresponding outer space $\mathcal{O}(G, G_1, F_2)$, which we denote by $\mathcal{O}$. In each tree $T \in \mathcal{O}$ there is exactly one non free vertex $v_1$ s.t $G_{v_1} = G_1$. Then we define the outer automorphism $\phi$, which satisfies $\phi(a) = a$ for every $a \in G_1$, $\phi(b_1)= b_2g_1, \phi(b_2) = b_1 b_2$ for some $g_1 \in G_1$, then we can see that $\phi \in Out(G, \mathcal{O})$ is an IWIP relative to $\mathcal{O}$. But then every factor automorphism of $G_1$ that fixes $g_1$ commutes with $\phi$ and therefore $C(\phi)$ contains the subgroup $A$ of  $Aut(G_1)Inn(G)$ that fixes $g_1$. So if $A$ is sufficiently big, the relative centraliser is not virtually cyclic, but the quotient modulo its intersection with $A$ is (by applying the previous theorem).
 Since we can change $G_1$ with any group (of finite Kurosh rank) and we can get automorphisms with arbitarily big centralisers. For example, if $G_1$ is isomorphic to $F_3$ and $g_1$ an element of its free basis, we have that $C(\phi)$ contains a subgroup which is isomorphic to $Aut(F_2)Inn(G)$.
\end{exa}

\textbf{Strategy of the proof}:
The paper is organized as follows:\par
In Section 2, we recall some preliminary definitions, facts and well known results  about  the outer space of a free product. In Section 3, we prove a useful technical lemma for $\mathcal{O}$-maps, more specifically we prove that every two such maps are equal except possibly two bounded (depends only on the map, not the path) paths near the endpoints. 
The next sections form the main part of this paper and we follow exactly the same approach as in \cite{BFM}.
In section 4, we define the lamination using train track representatives, and then we extend the notion to any tree. Also, we list some useful properties.
In Section 5, we define the action of $Out(G,\mathcal{O})$ on the set of irreducible laminations. In Section 6 we define the notion of a subgroup which carries the lamination and then we prove that any such subgroup has finite index in the whole group.
In Section 7, which is the most  crucial for our arguments, we construct  a homomorphism from the stabiliser to group of positive real numbers. Then in Section 8, we study the kernel of this homomorphism, and in particular, we  prove that any element of the kernel is non-exponentially growing and in the reducible case it has a relative train track representative with a very good form restricted to the lower strata.
Also, we prove the discreteness of the image which allows us to think the previous map, as a homomorphism from the stabiliser to the group of integers. Finally, in
Section 9, we prove some useful lemmas and the main results.\\
\textbf{Acknowledgements.} I warmly thank my advisor Armando Martino for his help, suggestions, ideas, and corrections.\\
I would also like to thank Lee Mosher for pointing me out the inconsistency of a statement of the first version of this paper and Ashot Minasyan for his suggestion to generalise the main result using commensurators instead of centralisers.
Finally, I would like to thank the anonymous referees that pointed me out that the kernel of the action is trivial and for providing me a counter-example for a previous statement of my results.

\section{Preliminaries}

\subsection{Outer space and $\mathcal{O}$-maps}
In this subsection we recall the definitions of outer space and some basic properties. For example, the existence of $\mathcal{O}$ - maps between any two elements of the space which is a very useful tool.\\
Everything in the present and the next subsection about the outer space, the $\mathcal{O}$ - maps and the train track representatives can be found in \cite{FM}.\\  
Let $G$ be a group which splits as a finite free product of the following form $G = H_{1} \ast ...\ast H_{q} \ast F_{r}$, where every $H_i$ is non-trivial, not isomorphic to $\mathbb{Z}$ and freely indecomposable. We say that such a group has \textit{ finite Kurosh rank} and such a decomposition is called \textit{Gruskho decomposition}. For example, every f.g. group admits a splitting as above (by the Grushko's theorem). We are interested only for groups which have finite Kurosh rank.
\\
Now for a group $G$, as above, we fix an arbitary (non-trivial) free product decomposition $G = H_{1} \ast ...\ast H_{q} \ast F_{r}$ (without the assumption that the $H_i$'s are not isomorphic to $\mathbb{Z}$ or freely indecomposable), but we additionally suppose that $r>0$. These groups admit co-compact actions on $\mathbb{R}$-trees (and vice-versa). It is useful that we can also apply the theory in the case that $G$ is free, and the $G_i$'s are certain free factors of $G$ (relative free case).\\
We will define an outer space $\mathcal{O} = \mathcal{O}(G, (G_i)^{p}_{i=1}, F_r)$ relative to the free product decomposition (or relative outer space). The elements of the outer space can be thought as simplicial metric $G$-trees, up to $G$-equivariant homothety.
Moreover, we require that these trees also satisfy the following:
\begin{itemize}
\item The action of $G$ on $T$ is minimal.
\item The edge stabilisers are trivial.
\item There are finitely many orbits of vertices with non-trivial stabiliser, more precisely for every $H_i$, $i = 1,..., q$ (as above) there is exactly one vertex $v_i$ with stabiliser $H_i$ (all the vertices in the orbits of $v_i$'s are called \textit{non-free vertices}).
\item All other vertices have trivial stabiliser (and we call them \textit{free vertices}).
\end{itemize}
The quotient $G / T$ is a finite graph of groups. We could also define the outer space as the space of "marked metric graph of groups" using the quotients instead of the trees, but we won't use this point of view because here it is easier to work using the trees. However, we use the quotients when the statements in this context are less complicated.\\
We would like to define a natural action of $Out(G)$ on $\mathcal{O}$, but this is not possible since it not always the case that the automorphisms preserve the structure of the trees (i.e. they don't send non-free vertices to non-free vertices). However, we can describe here the action of a specific subgroup of $Out(G)$ (namely, the automorphisms that preserve the decomposition or equivalently the structure of the trees) on $\mathcal{O}$.\\
Let $Aut(G, \mathcal{O})$ be the subgroup of $Aut(G)$  that preserve the set of conjugacy classes of the $G_i$ 's. Equivalently, $\phi \in Aut(G)$ belongs to $Aut(G,\mathcal{O})$ iff $\phi(G_i)$ is conjugate to one of the $G_j$ 's. The group $Aut(G,\mathcal{O})$ admits a natural action on a simplicial tree by "changing the action", i.e. for $\phi \in Aut(G, \mathcal{O})$ and $T \in \mathcal{O}$, we define $\phi(T)$ to be the element with the same underlying tree with $T$, the same metric but the action is given by $g*x = \phi(g)x$ (where the action in the right hand side is the action of the $G$-tree $T$). Now since the set of inner automorphisms of $G$, $Inn(G)$ acts trivially on $\mathcal{O}$ we can define $Out (G,\mathcal{O}) = Aut(G,\mathcal{O})/ Inn(G)$ which acts on $\mathcal{O}$ as above. Note that in the case of the Grushko decomposition we have $Out(G) = Out(G,\mathcal{O})$.\\
We say that a map between trees $A,B \in \mathcal{O}$, $f : A \rightarrow B$ is an \textbf{$\mathcal{O}$- map}, if it is a $G$-equivariant, Lipschitz continuous, surjective function.
Note here that we denote by $Lip(f)$ the Lipschitz constant of $f$.
\\
It is very useful to know that there are such maps between any two trees. This is true and, additionally, by their construction they coincide on the non - free vertices (and in section 3, we prove that every two such maps "almost" coincide). More specifically, by \cite{FM}, we get:
\begin{lemma}
For every pair $A,B \in \mathcal{O}$; there exists a $\mathcal{O}$-map $f : A \rightarrow B$.
Moreover, any two $\mathcal{O}$-maps from $A$ to $B$ coincide on the non-free vertices.
\end{lemma}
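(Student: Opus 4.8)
The plan is to treat the two assertions separately: the existence of an $\mathcal{O}$-map, and the fact that any two of them agree on the non-free vertices. For existence, I would build the map equivariantly from a fundamental domain of the $G$-action on $A$. Passing to the finite quotient graph of groups $G \backslash A$, I would fix a set of orbit representatives for the vertices and the (oriented) edges of $A$, define the map on these representatives, and then extend by $G$-equivariance.

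Concretely, I would first define $f$ on vertices. For a non-free vertex $v$ of $A$ whose stabiliser $G_v$ is conjugate to some $G_i$, equivariance already forces $f(v)$ to be fixed by $G_v$, so I would send $v$ to the non-free vertex $w$ of $B$ with $G_w = G_v$; for the representatives of the free-vertex orbits I would choose images in $B$ arbitrarily and extend $G$-equivariantly, which is well defined because free vertices have trivial stabiliser. For each representative edge $e$ with endpoints $v_0,v_1$ I would map $e$ affinely onto the geodesic $[f(v_0),f(v_1)]$ in $B$ and set $f(g e) = g\, f(e)$; this is consistent because edge stabilisers are trivial, so no nontrivial element fixes $e$ and distinct translates of the open edge are disjoint. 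The resulting $f$ is continuous and $G$-equivariant. Since $A$ has finitely many orbits of edges and $B$ is simplicial, the connecting geodesics $[f(v_0),f(v_1)]$ have uniformly bounded length, so $f$ is Lipschitz. Finally $f(A)$ is connected, hence a subtree of $B$, and it is $G$-invariant; by minimality of the action on $B$ any nonempty $G$-invariant subtree equals $B$, so $f$ is surjective. Thus $f$ is an $\mathcal{O}$-map.

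For the second assertion, let $f,g : A \to B$ be two $\mathcal{O}$-maps and let $v$ be a non-free vertex with (nontrivial) stabiliser $G_v$. By $G$-equivariance, $f(v) = f(hv) = h\,f(v)$ for every $h \in G_v$, so $f(v) \in \mathrm{Fix}(G_v)$, and likewise $g(v) \in \mathrm{Fix}(G_v)$. The crucial point is that in $B$ the fixed-point set of the nontrivial subgroup $G_v$ is a single vertex: if $G_v$ fixed two distinct points it would fix the geodesic joining them, hence fix an edge, contradicting triviality of edge stabilisers. Since $G_v$ does fix the corresponding non-free vertex $w$ of $B$, we conclude $\mathrm{Fix}(G_v) = \{w\}$ and therefore $f(v) = w = g(v)$.

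The routine but care-demanding step is the equivariant extension in the existence part, namely checking that the edgewise definition is consistent across the fundamental domain and genuinely continuous. The conceptual heart, however, is the uniqueness argument, where triviality of edge stabilisers collapses the fixed-point set of each $G_v$ to a single vertex; this is exactly what forces every $\mathcal{O}$-map to behave rigidly on the non-free part, and it is what makes the coincidence on non-free vertices hold with no control whatsoever on the free vertices.
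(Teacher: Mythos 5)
Your proof is correct; note that the paper does not actually prove this lemma but quotes it from the reference [FM] (Francaviglia--Martino), and your argument --- equivariant extension from orbit representatives, surjectivity via minimality, and uniqueness on non-free vertices from the fact that trivial edge stabilisers force $\mathrm{Fix}(G_v)$ to be a single point --- is exactly the standard construction given there. No gaps.
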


Let $f: A \rightarrow A$ be a simplicial (sending vertices to vertices and edges to edge-paths) $\mathcal{O}$-map, where $A \in \mathcal{O}$. Then $f$ induces a map (here we denote by $Df$ the map which sends every edge $e$ to the first edge of the edge path $f(e)$) on the set of turns, sending every turn $(e_1, e_2)$ to the turn $(Df(e_1),Df(e_2))$. Then as usually, we say that the turn $(e_1, e_2)$ is \textit{legal}, if for every $k$ the turn $(Df^{k}(e_1), Df^{k}(e_2))$ is non-degenerate. This induces a pre-train track structure on the set of edges at each vertex. But there are also different pre-train track structures and one of which we will use later, therefore we need the general definition.
\begin{defin}
\begin{enumerate}
\item A \textbf{pre-train track structure} on a $G$-tree $T$ is a $G$-invariant equivalence relation on the set of germs of edges at each vertex of $T$.
Equivalence classes of germs are called \textbf{gates}.
\item A \textbf{train track structure} on a $G$-tree T is a pre-train track structure with at least two gates at every vertex.
\item A \textbf{turn} is a pair of germs of edges emanating from the same vertex. A \textbf{legal turn} is called a turn for which the two germs belong to different equivalent classes. A \textbf{legal path}, is a path that contains only legal turns.
\end{enumerate}	
\end{defin}

A  pre-train track structure induced by some $\mathcal{O}$ - map is not always a train track structure, but there are some $\mathcal{O}$ - maps (we call them optimal maps) which induce train track structures. But firstly we need the notion of PL maps (which corresponds to piecewise linear homotopy equivalence in the free case). We call a map between two elements of the outer space \textbf{PL}, if it is piecewise linear and $\mathcal{O}$-map.
We denote by $A_{max}(f)$ the subgraph of $A$ consisting on those edges $e$ of $A$ for which $S_{f,e} = Lip(f)$ (i.e. the set of edges which are maximally stretched by $f$).
Note that $A_{max}$ is $G$-invariant and that in literature the set $A_ {max}$ is often referred to as \textit{tension graph}.

As we have seen in the discussion above, for every map there is an induced structure. More specifically, if $A,B \in \mathcal{O}$ and $f : A \rightarrow B$ is a PL-map, then\textbf{ the pre-train track structure induced by $f$} on $A$ is defined by declaring germs of edges to be equivalent if they have the same \textit{non-degenerate} $f$-image (so if two maps that are collapsed by $f$, they are not equivalent).

We are now in position to define optimal maps:
\begin{defin}
Let $A,B \in \mathcal{O}$. A PL-map $f : A \rightarrow B$ is not optimal at $v$, if $A_{max}$ has only one gate at $v$ for the pre-train track structure induced by $f$. Otherwise, $f$ is \textbf{optimal at $v$}. The map $f$ is \textbf{optimal}, if it is optimal at all vertices.
\end{defin}
\begin{remark}
A PL-map $f : A \rightarrow B$ is optimal if and only if the pre-train
track structure induced by $f$ is a train track structure on $A_{max}$. In particular, if $f : A \rightarrow B$ is an optimal map, then at every vertex $v$ of $A_{max}$ there is a legal turn in $A_{max}$.
\end{remark} 
Note also that by \cite{FM}, every  PL-map is optimal at non-free vertices and for every $A,B \in \mathcal{O}$ there exists an optimal map from $A$ to $B$. Therefore we can always choose our $\mathcal{O}$ - maps to be optimal and we will use optimal maps without further mention.

\subsection{Relative Automorphisms}

We denote by $Out(G, \{ G_i\} ^{t})$ the subgroup of $Out(G, \mathcal{O})$ made of those automorphisms that act as a conjugation by an element of $G$ on each $G_i$.
Since the $G_i$'s are free factors of $G$, each subgroup $G_i$ is equal to its normalizer in $G$. Therefore, any element of $Out(G, \mathcal{O})$ (i.e. that preserves the conjugacy class of the $G_i$'s)
induces a well-defined outer automorphism of $G_i$. Therefore there is a natural homomorphism
$Out(G, \{ G_i \} ^{t}) \rightarrow Out(G_i)$ and by taking the product over all groups $G_i$, we get a (surjective) homomorphism  $Out(G, \mathcal{O}) \rightarrow \bigoplus \limits ^{p} _{i = 1} Out(Gi)$, with kernel exactly $Out(G, \{G_i \} ^{t})$.

\subsection{Train Track Maps and Irreducibility}
In this section we will define the notion of a "good" representative of an outer automorphism. It is a generalisation of train track representatives of automorphisms of free groups, but as we have already mentioned we work in the trees instead of their quotients. For more details for this approach see \cite{FM, Syk}. As we have seen there are representatives of every outer automorphism (i.e. $\mathcal{O}$-maps from $A$ to $\phi(A)$), but sometimes we can find representatives with better properties. These maps, which are called \textit{train track maps}, are very useful and every irreducible automorphism has such a representative (we can choose it to be simplicial, as well).\\
For $T \in \mathcal{O}$ we say that a Lipschitz surjective map $f : T \rightarrow T$ \textbf{represents}
$\phi$ if for any $g \in G$ and $t \in T$ we have $f(gt) = \phi(g)(f(t))$. (In other words,
if it is an $\mathcal{O}$-map from $T$ to $\phi(T)$.) We give below the definition of a train track map representing an outer automorphism. We are interested for these maps because we can control their cancellation (it is not possible to avoid it).

\begin{defin}
If $T \in \mathcal{O}$ then a PL-map $f : T \rightarrow T$, which representing $\phi$, is a train track map if there is a train track structure on $T$ so that
\begin{enumerate}
 \item  f maps edges to legal paths (in particular, $f$ does not collapse edges)
 \item If $f(v)$ is a vertex, then $f$ maps inequivalent germs at $v$ to inequivalent germs at $f(v)$.
 \end{enumerate}
\end{defin}

In the free case, an automorphism $\phi$ is called  \textit{irreducible}, if it there is no $\phi$-invariant free factor up to conjugation (or equivalently the topological representatives of $\phi$ haven't non-trivial proper invariant subgraphs). In our case we know that the $G_i$'s are invariant free factors, but we don't want to have "more invariant free factors". More precisely, 
we will define the irreducibility of some automorphism \textit{relative} to the space $\mathcal{O}$ or to the free product decomposition.
\begin{defin}
We say $\Phi \in  Out(G, \mathcal{O})$ is $\mathcal{O}$-\textit{irreducible} (or simply irreducible) if for any $T \in \mathcal{O}$ and for any $f : T \rightarrow T$ representing $\Phi$, if $W \subseteq T$ is a proper $f$-invariant $G$-subgraph then $ G / W$ is a union of trees each of which contains at most one non-free vertex.
\end{defin}

We can also give an alternative algebraic definition, but we need the notion of a free factor system. Suppose that $G$ can be written as a free product, $G = G_1 \ast G_2 \ast ...G_p \ast G_{\infty}$. Then we say that the set $\mathcal{A} = \{ [G_i] : 1 \leq i  \leq p \}$ is a\textbf{ free factor system for $G$}, where $[A]$ = $\{ gAg^{-1} : g \in G \}$ is the set of conjugates of $A$.\\
Now we define an order on the set of free factor systems for $G$. More specifically, given two free factor systems $\mathcal{G} = \{ [G_i] : 1 \leq i  \leq p \}$ and $\mathcal{H} = \{ [H_j] : 1 \leq j  \leq m \}$, we write $\mathcal{G} \sqsubseteq \mathcal{H}$ if for each $i$ there exists a $j$ such that $G_i \leq gH_jg^{-1}$ for some $g \in G$. The inclusion is strict, and we write $\mathcal{G} \sqsubset \mathcal{H}$, if some $G_i$ is contained strictly in some conjugate of $H_j$. We can see  $ \{[G] \} $ as a free factor system and in fact, it is the maximal (under $\sqsubseteq$) free factor system. Any free factor system that is contained strictly to $\mathcal{G}$ is called \textbf{proper}. Note also that the Grushko decomposition induces a free factor system, which is actually the minimal free factor system (relative to $\sqsubseteq$). A more detailed discussion for the theory of free factor systems can be found in \cite{HL}.\\
We say that $\mathcal{G} = \{ [G_i] : 1 \leq i  \leq p \}$ is $\phi$ - \textbf{invariant} for some $\phi \in Out(G)$, if $\phi$ preserves the conjugacy classes of $G_i$'s. We are only interested for free factor systems that $G_{\infty}$ is a finitely generated free group. In particular, we suppose that $G = G_1 \ast G_2 \ast ... G_p \ast G_{\infty}$, and $G_{\infty}= F_k$ for some f.g. free group $F_k$. In each free factor system $\mathcal{G} = \{ [Gi] : 1 \leq i  \leq k \}$, we associate the outer space $\mathcal{O} = \mathcal{O}(G, (G_i)^{p}_{i=1}, F_k)$ and any $\phi \in Out(G)$ leaving $\mathcal{G}$ invariant, will act on $\mathcal{O}$ in the same way as we have described earlier.
\begin{defin}
Let $\mathcal{G}$ be a free factor system of $G$ which is $\Phi$- invariant for some $\Phi \in Out(G)$. Then $\Phi$ is called \textit{irreducible relative to $\mathcal{G}$}, if $\mathcal{G}$ is a maximal (under $\sqsubseteq$) proper, $\Phi$-invariant free factor system.
\end{defin}

The next lemma confirms that the two definitions of irreducibility are related.

\begin{lemma}
Suppose $\mathcal{G}$ is a free factor system of $G$ with associated space of trees $\mathcal{O}$, and further suppose that $\mathcal{G}$ is $\phi$-invariant. Then $\phi$ is irreducible relative to $\mathcal{G}$ if and only if $\phi$ is $\mathcal{O}$-irreducible.
\end{lemma}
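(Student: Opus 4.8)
The plan is to set up a dictionary between proper $G$-invariant subgraphs of a tree in $\mathcal{O}$ and free factor systems lying between $\mathcal{G}$ and $\{[G]\}$, and then read off both definitions as the same statement through this dictionary. Fix $T \in \mathcal{O}$ and a $G$-invariant subgraph $W \subseteq T$. Collapsing each component of $W$ to a point gives a $G$-tree whose nontrivial vertex stabilisers are exactly the stabilisers of the components $\widetilde{W}$ of $W$; I associate to $W$ the free factor system $\mathcal{H}_W = \{[Stab(\widetilde{W})] : Stab(\widetilde{W}) \neq 1\}$. The three observations I would verify are: (a) since every non-free vertex $v_i$ of $T$ collapses into a component whose stabiliser contains $G_i$, we always have $\mathcal{G} \sqsubseteq \mathcal{H}_W$, and $\mathcal{H}_W = \{[G]\}$ iff $W = T$; (b) a component of $G/W$ is a tree with at most one non-free vertex iff the corresponding $Stab(\widetilde{W})$ is trivial or a single conjugate of some $G_i$, so ``$G/W$ is a union of trees each with at most one non-free vertex'' is equivalent to $\mathcal{H}_W = \mathcal{G}$, while its failure (a loop, or two non-free vertices in one component) forces $Stab(\widetilde{W})$ to be a free factor properly above the $G_i$'s it contains, i.e. $\mathcal{G} \sqsubset \mathcal{H}_W$; and (c) if $f : T \to T$ represents $\phi$ and leaves $W$ invariant, then $f$ permutes the components of $W$, and the relation $f(gx) = \phi(g)f(x)$ forces $\phi$ to permute the classes $[Stab(\widetilde{W})]$, so $\mathcal{H}_W$ is $\phi$-invariant.

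Granting this dictionary, the \textbf{forward direction} (irreducible relative to $\mathcal{G}$ $\Rightarrow$ $\mathcal{O}$-irreducible) is immediate by contraposition. If $\phi$ failed to be $\mathcal{O}$-irreducible, there would exist $T$, a representative $f$, and a proper $f$-invariant $G$-subgraph $W$ with some component of $G/W$ either not a tree or carrying at least two non-free vertices. By (a)--(c) the system $\mathcal{H}_W$ would then be a $\phi$-invariant free factor system with $\mathcal{G} \sqsubset \mathcal{H}_W \sqsubset \{[G]\}$, contradicting the maximality of $\mathcal{G}$ among proper $\phi$-invariant free factor systems.

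For the \textbf{reverse direction} ($\mathcal{O}$-irreducible $\Rightarrow$ irreducible relative to $\mathcal{G}$) I argue again by contraposition, and here the content is in realising an abstract free factor system geometrically. Suppose $\mathcal{G}$ is not maximal, so there is a $\phi$-invariant $\mathcal{H} = \{[H_j]\}$ with $\mathcal{G} \sqsubset \mathcal{H} \sqsubset \{[G]\}$. Since $\mathcal{G} \sqsubseteq \mathcal{H}$, I would start from a tree $\bar{T}$ in the outer space associated to the decomposition $G = H_1 \ast \cdots \ast H_m \ast F_{r'}$ realising $\mathcal{H}$, and blow up each $H_j$-vertex into the minimal $H_j$-tree for the induced decomposition of $H_j$ (the one carrying the relevant $G_i$'s as its own non-free vertices). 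This produces $T \in \mathcal{O}$ together with a proper $G$-invariant subforest $W$, the orbit of the blown-up pieces, whose collapse recovers $\bar{T}$ and whose associated system is exactly $\mathcal{H}_W = \mathcal{H}$. Because $\mathcal{H}$ is $\phi$-invariant, $\phi$ acts on the outer space of $\mathcal{H}$ and has a representative $\bar{f} : \bar{T} \to \bar{T}$; I then lift $\bar{f}$ to a representative $f : T \to T$ of $\phi$ that preserves $W$, using that $\phi$ merely permutes the classes $[H_j]$ and induces an outer automorphism on each $H_j$ that can be realised on the corresponding blown-up piece. The resulting triple $(T, f, W)$ has $G/W$ ``bad'' (as $\mathcal{H}_W = \mathcal{H} \sqsupsetneq \mathcal{G}$), contradicting $\mathcal{O}$-irreducibility.

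The routine part is the dictionary, which is standard Bass--Serre bookkeeping once edge stabilisers are trivial. The hard part will be the realisation step in the reverse direction: simultaneously producing a tree in $\mathcal{O}$ whose invariant subforest yields exactly $\mathcal{H}$ with the non-free vertices correctly nested, and, crucially, lifting the representative $\bar{f}$ to an honest $\mathcal{O}$-map $f$ on $T$ that genuinely preserves $W$. It is precisely here that the $\phi$-invariance of $\mathcal{H}$ is used, and one must check that the blow-up can be chosen $\phi$-equivariantly so that the lift is Lipschitz, surjective and $G$-equivariant; the remaining verifications reduce to the properties of $\mathcal{O}$-maps and train track representatives already recorded above.
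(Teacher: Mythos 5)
The paper does not actually prove this lemma: it appears in the preliminaries and is imported from Francaviglia--Martino \cite{FM}, so there is no in-paper argument to compare against. Your dictionary between proper invariant subgraphs and free factor systems is the standard route, and the overall shape of your argument (contraposition in both directions, with the blow-up/realisation step carrying the real content of the reverse implication) is the right one.

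Two points in the forward direction are, however, stated incorrectly or too quickly. First, claim (a) is false as written: a proper $f$-invariant $G$-subgraph $W$ need not contain any non-free vertex (for instance an invariant orbit of edges joining free vertices), in which case the collapsed tree retains the $v_i$ as genuine vertices with stabiliser $G_i$, these classes do not occur among the component stabilisers, and $\mathcal{G}\sqsubseteq\mathcal{H}_W$ fails. The fix is harmless --- either define $\mathcal{H}_W$ using all nontrivial vertex stabilisers of the collapsed tree, or enlarge $W$ by the orbits of the non-free vertices, which keeps it proper, $f$-invariant (an $\mathcal{O}$-map sends non-free vertices to non-free vertices) and ``bad'' --- but as stated you do not reach a contradiction with maximality of $\mathcal{G}$, since maximality only forbids proper $\phi$-invariant systems lying \emph{above} $\mathcal{G}$. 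Second, $f(W)\subseteq W$ together with equivariance only yields $\phi(\mathcal{H}_W)\sqsubseteq\mathcal{H}_W$ (two orbits of components may be pushed into one), not that $\phi$ permutes the classes; one needs the standard Kurosh-rank/complexity argument that a free factor system cannot be properly nested below its image under an automorphism. The reverse direction is correctly identified as the hard step: blowing up a tree for $\mathcal{H}$ along $H_j$-minimal trees and lifting $\bar f$ equivariantly is exactly how one realises an invariant free factor system by an invariant subgraph, and your sketch, though not carried out, contains the right ingredients, in particular the correct use of $\phi$-invariance of $\mathcal{H}$ to make $\phi$ act on the smaller outer space and of $\mathcal{G}\sqsubset\mathcal{H}$ to guarantee that some component of $G/W$ is not a tree with at most one non-free vertex.
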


Moreover, one interesting fact is that for an irreducible automorphism we can give a characterisation of train track maps using the axes of hyperbolic elements. More specifically, if $\phi$ is irreducible, then for a map $f$ representing $\phi \in Out(G, \mathcal{O})$, to be a train track map is equivalent to the condition that there is $g \in G$ (hyperbolic element) so that $L = axis_T (g)$ (the axis of $g$) is legal and $f^k(L)$ is legal $k \in \mathbb{N}$.
\\
\\
Now let's give the definition of an irreducible automorphism with irreducible powers relative to $\mathcal{O}$, which are the automorphisms that we will study.
\begin{defin}
An outer automorphism $\phi \in Out(G, \mathcal{O})$ is called \textbf{IWIP} (\textit{irreducible with irreducible powers} or \textit{fully irreducible}), if every $\phi ^{k}$ is irreducible relative to $\mathcal{O}$.
\end{defin}

The next theorem is very important since we can always choose representatives of irreducible automorphisms with nice properties, as in the free case. It generalises the well known theorem of Bestvina and Handel (see \cite{BH}) . In particular, we can apply it on every power of some $IWIP$.
\begin{theorem} [Francaviglia- Martino]
Let $\phi \in Out(G, \mathcal{O})$ be irreducible. Then there exists a (simplicial) train track map representing $\phi$.
\end{theorem}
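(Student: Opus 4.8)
The plan is to follow the metric strategy of Francaviglia and Martino rather than the original combinatorial algorithm of Bestvina and Handel (\cite{BH}), using the Lipschitz displacement on $\mathcal{O}$ as the complexity to be minimised; the role played by the Perron--Frobenius eigenvalue of a transition matrix in the classical argument is here taken over by the optimal stretching factor. Concretely, I would introduce
\[
\lambda(\phi) \;=\; \inf \set{\, Lip(f) \;:\; A \in \mathcal{O},\ f : A \to \phi(A) \text{ an optimal } \mathcal{O}\text{-map} \,},
\]
which is well defined because optimal $\mathcal{O}$-maps between any two trees exist by \cite{FM} (and are recalled in the Remark on optimality above). After normalising every tree in $\mathcal{O}$ to have total volume one, so that scaling is factored out, the goal is to show that this infimum is attained by a pair $(A,f)$ and that at any minimiser the optimal map $f$ is automatically a train track map.

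First I would prove that a minimising sequence $(A_n,f_n)$ cannot degenerate, i.e.\ cannot leave a compact part of normalised outer space. This is exactly where the hypothesis is used: if the $A_n$ escaped, a subsequential limit would collapse some edges, and the collapsed $G$-subgraph would be, in the limit, $f$-invariant, hence would determine a proper $\phi$-invariant free factor system strictly larger than $\mathcal{G}$. Since $\phi$ is $\mathcal{O}$-irreducible and $\mathcal{G}$ is maximal among proper invariant free factor systems (using the equivalence of the two notions of irreducibility established above), no such system exists, so the sequence must remain in a compact region and a subsequence converges to a genuine optimal minimiser $(A,f)$ with $Lip(f)=\lambda(\phi)$.

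Next, at the minimiser I would show that the tension graph $A_{max}(f)$ is all of $A$ and that $f$ is legal on it. Since $A_{max}$ is $G$-invariant, a proper tension graph would again produce a proper invariant free factor system and contradict $\mathcal{O}$-irreducibility, so $A_{max}=A$. For legality I would use a folding argument: if $f$ sent some edge to a non-legal path, i.e.\ if some turn taken by $f$ inside $A_{max}$ were illegal, then folding the two germs with a common non-degenerate image would yield a new optimal representative of $\phi$ with strictly smaller Lipschitz constant, contradicting the minimality of $\lambda(\phi)$. Optimality already guarantees at least two gates at every vertex of $A_{max}=A$ (by the Remark above), hence a legal turn at each vertex; combining this with the absence of illegal taken turns gives precisely conditions (i) and (ii) in the definition of a train track map. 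As a cleaner repackaging of this step one may instead invoke the axis characterisation recalled after the lemma relating the two irreducibility notions: it suffices to exhibit a hyperbolic $g$ whose axis is legal and remains legal under every $f^k$, which holds at the minimiser.

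Finally, to make the representative \emph{simplicial} I would subdivide the edges of $A$ at the finitely many $f$-preimages of vertices, so that $f$ sends vertices to vertices and edges to edge-paths while preserving the train track structure; this is routine. The step I expect to be the main obstacle is the attainment of the infimum, that is, ruling out degeneration of minimising sequences, since this is the only place where irreducibility is genuinely needed and it requires converting a metric degeneration carefully into the language of invariant free factor systems. By contrast, the folding reduction that upgrades an optimal minimiser to a train track map is local and combinatorial, and the passage to a simplicial model is purely technical.
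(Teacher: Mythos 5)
This theorem is imported by the paper from \cite{FM} and is stated there without proof, so there is no internal argument to compare against; what can be said is that your outline correctly identifies the strategy actually used by Francaviglia and Martino (minimise the displacement $\lambda(\phi)=\inf_{A}\mathrm{Lip}(f)$ over normalised trees and optimal maps, show the infimum is attained, and show a minimiser carries a train track structure), rather than the Bestvina--Handel combinatorial algorithm. As a proof, however, the sketch has two genuine gaps, both at the points you yourself flag as delicate. First, the attainment step: a minimising sequence $(A_n,f_n)$ in projectivised outer space need not subconverge at all --- the space is a union of infinitely many simplices and the sequence may wander through infinitely many of them, and even inside one simplex the maps $f_n$ vary with $n$, so there is no single map $f$ for which the degenerating subgraph is ``$f$-invariant in the limit.'' Moreover, by the paper's own definition of $\mathcal{O}$-irreducibility, an invariant subgraph whose components are trees containing at most one non-free vertex is \emph{permitted}; you must separately rule out (or render harmless, by collapsing) the case where the degenerating subgraph is of this inessential type, since it yields no proper invariant free factor system and hence no contradiction. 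In \cite{FM} this is handled by quantitative estimates showing the displacement blows up along any degeneration that is not of the collapsible type; merely invoking irreducibility does not close this step.

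Second, the folding step overclaims: folding the two germs of an illegal turn with common non-degenerate image produces a representative with Lipschitz constant $\leq$ the original, not $<$ in general, so you do not immediately contradict minimality. The correct bookkeeping is that each such move either strictly decreases $\mathrm{Lip}$ (done), or strictly shrinks the tension graph $A_{max}$; since the latter can happen only finitely often before $A_{max}$ becomes a proper invariant subgraph (which must then be checked to be essential, returning you to the caveat above), one eventually reaches a train track map at the same minimal constant. Relatedly, the ``cleaner repackaging'' via the axis characterisation is not a shortcut: that criterion characterises which representatives of an irreducible automorphism are train track maps, but exhibiting a hyperbolic $g$ with legal, iteratively legal axis at the minimiser is essentially equivalent to what is to be proved. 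The passage to a simplicial model by subdividing at preimages of vertices is, as you say, routine.
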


The discussion above implies that we can always find an optimal train track representative of an irreducible $\phi \in Out(G, \mathcal{O})$. This map has the property that the image of every legal path (in particular, of edges) is stretched by a constant number $\lambda \geq 1$ which depends only on $\phi$.\newline

We close this subsection with an interesting remark.
\begin{remark}
Every outer automorphism $\phi \in Out(G)$ is irreducible relative to some appropriate space (or relative to some free product decomposition). 
\end{remark}

\subsection{Bounded Cancellation Lemma}

Let $T, T′ \in \mathcal{O}$ and $f : T \rightarrow T'$ be an $\mathcal{O}$- map. If we have a concatenation of legal paths $ab$ where the corresponding turn is illegal, then it is possible to have cancellation in $f(a)f(b)$. But the cancellation is bounded, with some bound that depends only on $f$ and not on $a,b$ . In particular,  we can define the bounded cancellation constant of $f$ (let's denote it $BCC(f)$) to be the supremum of all real numbers $N$ with the property that there exist $A, B, C$ some points of $T$ with $B$ in the (unique) reduced path between $A$ and $C$ such that $d_{T′}(f(B), [f(A), f(C)]) = N$ (the distance of $f(B)$ from the reduced path connecting $f(A)$ and $f(C)$ ), or equivalently is the lowest upper bound of the cancellation for a fixed $\mathcal{O}$-map.\\
The existence of such number is well known, for example a bound has given in \cite{H1}:
\begin{lemma}
Let $T \in \mathcal{O}$, let $T′ \in \mathcal{O}$, and let $f : T \rightarrow T'$ be a Lipschitz map. Then $BCC(f) \leq Lip(f) qvol(T)$, where $qvol(T)$ the quotient volume of $T$, defined as the infimal volume of a finite subtree of $T$ whose $G$-translates cover $T$.
\end{lemma}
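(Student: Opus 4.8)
The plan is to fix $A,B,C \in T$ with $B$ on the reduced segment $[A,C]$, set $N = d_{T'}\bigl(f(B),[f(A),f(C)]\bigr)$, and bound $N$ uniformly. Let $m$ be the nearest-point projection of $f(B)$ onto the geodesic $[f(A),f(C)]$; equivalently $m$ is the median of $f(A),f(B),f(C)$, so in particular $m \in [f(A),f(B)]$. Since $f$ is continuous and its image on $[A,B]$ contains the geodesic $[f(A),f(B)]$, the point $m$ has a preimage on $[A,B]$; let $a \in [A,B]$ be the preimage of $m$ closest to $B$. By this choice $f$ carries the half-open arc from $a$ to $B$ into the component $U$ of $T' \setminus \{m\}$ that contains $f(B)$, and since $f(a)=m$ we obtain $N = d_{T'}(f(B),m) = d_{T'}(f(B),f(a)) \le Lip(f)\, d_T(a,B)$.

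Thus the whole lemma reduces to the single geometric inequality $d_T(a,B) \le qvol(T)$. I would prove this by a fundamental-domain/periodicity argument modelled on the one-dimensional picture, where $f$ twisted-commutes with a translation of length $qvol(T)$, so the closest overshoot preimage necessarily lies within one period of $B$. Concretely, fix a finite subtree $K$ with $GK = T$ and $\mathrm{vol}(K) \le qvol(T)+\varepsilon$. If $d_T(a,B) > qvol(T)$, then the segment $[a,B]$ projects to a path of length exceeding $\mathrm{vol}(G\backslash T)$ in the quotient, so it crosses a complete fundamental domain; extracting a repeated, coherently oriented edge yields a nontrivial $g \in G$ together with a point $a' = g\,p$ on $[a,B]$ strictly between $a$ and $B$. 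Using $G$-equivariance, $f$ sends this configuration to $g$-translates in $T'$; because $g\cdot m$ lies outside $U$, the connected image $f\bigl([a,a']\bigr)$ must return to $m$ at an interior point, producing a preimage of $m$ strictly closer to $B$ than $a$. This contradicts the choice of $a$, so $d_T(a,B) \le qvol(T)+\varepsilon$, and letting $\varepsilon \to 0$ gives $N \le Lip(f)\,qvol(T)$; taking the supremum over all admissible $A,B,C$ yields $BCC(f) \le Lip(f)\,qvol(T)$.

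The main obstacle is the fundamental-domain step. I expect the reduction $N \le Lip(f)\,d_T(a,B)$ to be routine, but turning \emph{``the backtracking arc is longer than one fundamental domain''} into an actual nearer preimage of $m$ requires care: one must extract the group element $g$ with a coherent orientation so that $g$ genuinely pushes points of $[a,B]$ toward $B$, verify that $g\cdot m \notin U$ (handling the degenerate case where $g$ fixes $m$ by choosing a hyperbolic $g$ whose axis crosses $[a,B]$), and keep track throughout the equivariance computations of the automorphism twisting the $G$-action on $T'$ relative to $T$. Once the orientation and the placement of $g\cdot m$ outside $U$ are pinned down, the contradiction with the minimality in the choice of $a$ closes the argument.
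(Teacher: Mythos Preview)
The paper does not give its own proof of this lemma; it is quoted from \cite{H1}. So there is no in-paper argument to compare against, and I evaluate your proposal on its own merits.

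Your reduction $N \le Lip(f)\,d_T(a,B)$ is correct and standard. The real problem is the second step, and it is more serious than an orientation bookkeeping issue. Your argument hinges on the claim that $\phi(g)\cdot m \notin U$ (you write $g\cdot m$, but the equivariance is twisted by some $\phi$). This claim can simply fail: nothing prevents the axis of $\phi(g)$ in $T'$ from lying entirely inside the component $U$. Concretely, if $m$ is not on that axis and the nearest-point projection of $m$ to the axis lands in $U$, then both $\phi(g)\cdot m$ and $\phi(g)^{-1}\cdot m$ lie in $U$, so no choice of orientation for $g$ rescues the argument. Your suggestion to ``choose a hyperbolic $g$ whose axis crosses $[a,B]$'' constrains $g$ in $T$, not $\phi(g)$ in $T'$, and gives no control over where the $T'$-axis sits relative to $m$. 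Even granting the claim, there is a second gap: the preimage of $m$ you would extract lies in $g\cdot[a,p]=[ga,a']$, and unless $a$ itself lies on the axis of $g$ in $T$, the point $ga$ need not belong to $[a,B]$, so the contradiction with the minimality of $a$ does not close.

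The argument Horbez records (going back to Gaboriau--Jaeger--Levitt--Lustig in the free case) sidesteps this geometry entirely: after rescaling $T$ so that $f$ is $1$-Lipschitz and linear on edges, one factors $f$ as a finite composition of equivariant folds; a single fold identifying two segments of length $\ell$ has $BCC\le\ell$ and decreases the quotient volume by exactly $\ell$, so summing gives $BCC(f)\le Lip(f)\,qvol(T)-qvol(T')\le Lip(f)\,qvol(T)$. If you want to repair your direct approach rather than switch to the fold decomposition, you would need a genuinely new idea relating the position of the $T'$-axis of $\phi(g)$ to the branch $U$; nothing in the current setup provides that.
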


We can also, exactly as in the free case, define a critical constant, $C_{crit}$ corresponding to a train track map.\\
Let's suppose that $f$ is train track map with expanding factor $\lambda$ (for example, a train track representative of some IWIP $\phi$). If we take $a,b,c$ legal paths and $abc$ is a path in the tree, and let's denote  $l =length(b)$ the length of the middle segment. If we suppose further that satisfies $\lambda l - 2BCC(f) > l$, then iteration and tightening of $abc$ 
will produce paths with the length of the legal leaf segment corresponding to $b$ to be arbitarily long. This is equivalent to require that $l > \frac{2BCC(f)}{\lambda -1}$ , and we call the number $C_{crit} =  \frac{2BCC(f)}{\lambda -1}$, \textit{the critical constant for $f$}.\\
For every $C$ that exceeds the critical constant there is $m > 0$ such that $b$, as above, has length at least $C$ then the length of the legal leaf segment of  $[f^{k} (abc)]$ corresponding to $b$  is at least $m \lambda^{k} length(b)$.\\
Therefore we can see that any path which contains a legal segment of length at least $C_{crit}$, has the property that the lengths of reduced $f$-iterates of the path are going to infinity.

\subsection{N-periodic paths}

A difference between the free and the general case is that it is not always the case that there are finitely many orbits of paths of a specific length (if there are non-free vertices with infinite stabiliser), but it is true that there are finitely many paths that have different projection in the quotient. Therefore the role of Nielsen periodic paths play the N-periodic paths that we define below. Note that if $h: S \rightarrow S$, we say that a point $x \in S$ is $h$-periodic, if there are $g \in G$ and some natural $k$  s.t. $h^{k}(x) = gx$.

\begin{defin}
\begin{enumerate}
\item Two paths $p, q$ in $S \in \mathcal{O}$ are called \textit{equivalent}, if they project to the same path in the quotient $G/S$. In particular, their endpoints $o(p), o(q)$ and $t(p), t(q)$ are in the same orbits, respectively.
\item Let $h : S \rightarrow S$ be a representative of some outer automorphism $\psi$, let $p$ be a path in $S$ and let's suppose that the endpoints of $p$ are $h$ - periodic (with period $k$), then we say that a path $p$ in $S$ is \textit{ N-periodic} (with period $k$), if the paths $[h^{k}(p)], p$ are equivalent.
\end{enumerate}
\end{defin}

\textbf{Geometric and non-Geometric automorphisms}:
We will define here some notions for automorphisms that have been motivated by the properties of geometric and non-geometric automorphisms, respectively. The terminology also comes from the free case. In that case, we say that $\phi$ is geometric if it can be represented as a (pseudo-Anosov) homeomorphism of a punctured surface. It is well known that for the non-geometric case there is an integer $m$ such that it is impossible to concatenate more than $m$ indivisible Nielsen paths for every map $f$ which represents $\phi$. We will generalise this property in order to give our definitions, using the notion of an indivisible N-periodic path as in the free case. In particular:

\begin{defin}
We say that some $\phi$ has the $NGC$ property , if it is impossible to concatenate more than $m$ indivisible N-periodic paths for every $\mathcal{O}$-map $f$ which represents $\phi$. Otherwise, we say that $\phi$ has the $GC$ property.
  \end{defin}

\subsection{Relative train-track maps}
Having good representatives of outer automorphisms, is very useful. If our automorphism is irreducible, it is possible to find train track representatives, as we have seen. But even in the reducible case we can find relative train track representative. The existence of such maps it follows from \cite{FM} or \cite{CT}.\\
That we have is that every automorphism can be represented as an $\mathcal{O}$-map $f : T \rightarrow T$ such that $T$ has a filtration $T_0 \subseteq T_1 \subseteq ...\subseteq T_k = T$ by $f$-invariant $G$-subgraphs, where $T_0$ contains every non- free vertex, we denote  by $H_r = cl(T_r - T_{r-1})$ and we suppose that the transition matrix (it can be defined as in the free case but we count orbits of edges) of every $H_r$ is irreducible (or zero matrix) so we can correspond in every $H_{r}$ some PF eigenvalue (let's denote it $\lambda _r$ ) . In addition, $f$ has some train track properties (such as mixed turns are legal and the map is $r$-legal). There is a very interesting corollary that we will use: for every edge-path $a$ in $H_r$, the reduced image of $a$, $[f(a)]$, can be written as a concatenation of non-degenerate edge-paths in $T_{i-1}$ and $H_i$ with the first and the last contained in $H_i$.\\
For such $a$, we can distinguish between two cases for the strata: if there exists some edge of $e$ in $H_r$ such that $[f(e)]$ contains at least two copies (orbits) of $e$, then we say that the stratum is \textit{exponentially growing} and we can see the $r$-lengths of images of edges in $H_r$ expands by $\lambda _r >1$ and in particular the lengths of reduced $f$-iterates of edges in $H_r$ are going to infinity (using the train track properties). Otherwise, the stratum called non-exponentially growing and the map $f$ (if we ignore the lower strata) is just a permutation of edges of the same length. An automorphism is called \textit{exponentially growing} if some representative has at least one exponentially growing stratum. In other case, it is called \textit{non-exponentially growing} automorphism. 

\subsection{Graph of Groups and Subgroups}
We will recall only some facts for the graph of groups. For more about graph of groups and their subgroups, see \cite{S}.\\
In the special case that we are interested, a graph of groups can be defined as a finite connected graph $X$ (let call $\Gamma$ the underlying graph) for which in every vertex $v$ we correspond some (vertex) group $G_v$. We call \textit{non -free} the vertices for which the corresponding group is non-trivial. Then the fundamental group of $X$, $\pi _1 (X)$ is the free product of $\pi _1 (\Gamma)$ (which is a f.g. free group) and the vertex groups.\\
We will use a specific kind of subgroups of $\pi _1 (X)$. Let $\gamma$ be a loop in $v_0 \in V(\Gamma$). Then starting from $v_o$ and following the path of $\gamma$ we meet some non-free vertices (we can return back also, but we have always follow $\gamma$). So we can read words of a fixed form, and this process produces words of the fundamental group (we can see it as the group which it consists of all the words constructed as above but without fixing some loop $\gamma$). In fact, the set of all such words is a subgroup of $\pi_1(X)$, which corresponds to $\gamma$.

\section{Every two $\mathcal{O}$ - maps coincide}
In \cite{FM} it has been proved the existence of $\mathcal{O}$-maps. We will prove that even if in the construction of such maps there is a lot of freedom, the reduced images of all of them coincide, up to bounded error. As a consequence we obtain that their lengths are comparable.

\begin{theorem}\label{O-maps}
	Let $f,h : A \rightarrow B$ be $\mathcal{O}$ - maps. Then there exists a positive constant $C$ (which depends only on $f$, $h$ and $A$), so that for every path $L$ in $A$, then  $[f(L)]$ and $[h(L)]$ are equal, except possibly some subpaths near their endpoints which their lengths are bounded by $C$.
\end{theorem}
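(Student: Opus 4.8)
The plan is to reduce the statement to an elementary fellow-travelling property of geodesics in a tree, after first showing that $f$ and $h$ stay uniformly close. \textbf{Step 1 (a uniform bound).} First I would prove that $C_0 := \sup_{x \in A} d_B(f(x), h(x))$ is finite and depends only on $f$, $h$ and $A$. The function $x \mapsto d_B(f(x), h(x))$ is continuous, since $f$ and $h$ are Lipschitz and $d_B$ is continuous, and it is $G$-invariant: because $f$ and $h$ are $G$-equivariant and $G$ acts on $B$ by isometries, $d_B(f(gx), h(gx)) = d_B(gf(x), gh(x)) = d_B(f(x), h(x))$. Hence it descends to a continuous function on the quotient $A/G$, which is a finite graph and in particular compact, so it is bounded. (This is consistent with the earlier lemma, which forces the value $0$ at every non-free vertex.)

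\textbf{Step 2 (reduction to geodesics).} Next I would use that $B$ is a tree. For a path $L$ in $A$ from $P$ to $Q$, the image $f(L)$ is a path in $B$ from $f(P)$ to $f(Q)$, and since $B$ is uniquely geodesic its reduction is the geodesic segment, so $[f(L)] = [f(P), f(Q)]$ and likewise $[h(L)] = [h(P), h(Q)]$. By Step 1, $d_B(f(P), h(P)) \le C_0$ and $d_B(f(Q), h(Q)) \le C_0$. Thus the theorem becomes the purely metric assertion that in a tree two geodesic segments whose corresponding endpoints are $C_0$-close coincide outside bounded neighbourhoods of their endpoints.

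\textbf{Step 3 (the tree lemma).} Finally I would establish this metric fact. The intersection $I = [f(P), f(Q)] \cap [h(P), h(Q)]$ is convex, hence a (possibly empty or degenerate) subsegment $[a,b]$ on which the two geodesics agree; this $I$ is the claimed common middle part. In the main case the entry point of $I$ on the $P$-side is the median $a = m(f(P), f(Q), h(P))$, which lies on $[f(P), h(P)]$, so $d_B(f(P), a)$ and $d_B(h(P), a)$ are both at most $C_0$; the symmetric argument at the $Q$-side bounds the subpaths beyond $b$. Taking $C$ to be a fixed multiple of $C_0$ then finishes the proof. \emph{The only genuine obstacle} is the bookkeeping of the degenerate configurations of the four points $f(P), f(Q), h(P), h(Q)$ --- when $I$ is empty or a single point, or when the overlap is traversed with opposite orientation --- but in each of these the constraints $d_B(f(P), h(P)), d_B(f(Q), h(Q)) \le C_0$ force both geodesics to have length $O(C_0)$, so after removing the near-endpoint subpaths of length $\le C$ nothing remains and the statement holds trivially. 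This is settled by a short case analysis rather than by any new idea.
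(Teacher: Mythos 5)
Your proof is correct, and it takes a genuinely different route to the key input than the paper does. The paper anchors everything at the non-free vertices: it uses the earlier lemma that any two $\mathcal{O}$-maps agree \emph{exactly} on the orbit of a non-free vertex $v$, brackets a given path $L=[a,b]$ between nearby translates $g_1v, g_2v$ (within $\mathrm{vol}(A/G)$ of the endpoints), and concludes that $[f(L)]$ and $[h(L)]$ both agree with the common geodesic $[f(g_1v),f(g_2v)]=[h(g_1v),h(g_2v)]$ up to end-pieces of length at most $\mathrm{Lip}(f)\,\mathrm{vol}(A/G)$ resp. $\mathrm{Lip}(h)\,\mathrm{vol}(A/G)$; the case with no non-free vertices is then dispatched by citing the classical free-group result. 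You instead establish the uniform bound $C_0=\sup_x d_B(f(x),h(x))<\infty$ directly from continuity, equivariance, and cocompactness, and then invoke the fellow-travelling property of geodesics in a tree. Both arguments ultimately rest on the same elementary tree geometry (two geodesics with $C_0$-close endpoints share a middle segment, with the excess controlled by medians/projections --- the paper uses this implicitly when it passes from $[f(a),f(b)]$ to $[f(g_1v),f(g_2v)]$), but your Step 1 buys a cleaner and more uniform treatment: it needs no case split between the free and non-free situations, does not rely on the coincidence lemma at non-free vertices, and yields an explicit constant $C=O(C_0)$; the paper's version buys slightly more concrete constants in terms of $\mathrm{vol}(A/G)$ and the Lipschitz constants. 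Your explicit handling of the degenerate configurations in Step 3 is also a point where you are more careful than the paper, which glosses over this entirely.
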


\begin{proof}
	Firstly, we suppose that there is at least one non-free vertex which we denote it by $v$. Then we have that $f(v) = h(v)$. If $L = [a,b]$ is an edge - path,
	then in distance at most $vol(A/G)$, we can find vertices of the form $g_1 v,g_2 v $ near $a, b$ respectively such that
	$[a,b] \subseteq [g_1v, g_2 v]$. Then $[f(L)]$ is contained in $[f(g_1 v), f(g_2 v) ]$, except possibly some segments near $a, b$ of length at most  $C' = vol(A/G) Lip(f)$. Similarly, we apply the same argument for $[h(g_1 v), h(g_2 v)]$ and we get a constant $C'' = vol(A/G) Lip(h)$.
	Therefore since $[h(g_1 v), h(g_2 v)] = [f(g_1 v), f(g_2 v) ]$, we get $[f(L)] = [h(L)]$ except possibly some segments near $a,b$ which are bounded by $C = max(C', C'')$ (by definition depends only on $Lip(f), Lip(h),
	 vol(G/A)$)\\
	If there are no non- free vertices, we are in the free case and the result is well known.
\end{proof}

Note also that it is not difficult to see that every $\mathcal{O}$-map is a quasi-isometry.

\section{Laminations}
We follow exactly the same approach as in \cite{BFM} and some of the proofs are essentially the same, but since in this context the definitions have adjusted appropriately, we give detailed proofs for the convenience of the reader. On the other hand, there are a lot of technical issues which are not appeared in the free case and they are addressed separately.\\
In this section we define the notion of the lamination associated to an IWIP. Firstly, we use the train track maps to define the lamination in a specific tree and the existence of $\mathcal{O}$-maps between any two trees allows us to generalise it for every tree.
\subsection{Construction of the lamination and properties}
Let $ \phi \in Out( G , \mathcal{O}) $ be an (expanding) irreducible automorphism, with irreducible powers and $ f : A \rightarrow A$ for some $A \in \mathcal{O}$ be a train track map which represents $\phi$ (so it satisfies $f(gx) = \phi(g) f(x)$).  We can also suppose that $f$ expand the length of the edges by a uniform factor $ \lambda > 1$ (this can be done if we choose an optimal train track that represents $f$, as we have already seen).\\
By changing $f$ with some iterate, if necessary, we can suppose that there is $x \in A$ which is a periodic point ($f^{k}(x) = x$, for some k), in the interior of some edge
(in general there exists $x$ s.t. $f^{k}(x) = gx$ since the quotient is finite, but we can change the space $A$, changing isometrically the action, with $\phi_{g} (A)$ and there the requested property holds).
 Now let $U$ some $\epsilon$-neighbourhood, for some small $\epsilon$ (we want the neighbourhood to be contained in the interior of the edge) and then there is some $N > 0$ s.t. $f^{N} (U) \supset U$.\\
We can choose an isometry $ \ell : (- \epsilon, \epsilon) \rightarrow U$ and extend it to the unique isometry $\ell : \mathbb{R} \rightarrow A$ s.t. $\ell( \lambda ^{N} t  ) = f^{N} (\ell(t))$ and then we say that the bi-infinite line $\ell$ is \textit{obtained by iterating a neighbourhood} of $x$.\\

\begin{defin}
\begin{itemize}
\item We say that two isometric immersions $A : [a,b] \rightarrow A$ and $B :[c,d] \rightarrow A$, where $a,b,c,d \in \mathcal{R}$ are equivalent, if there exists an isometry $q : [a,b] \rightarrow [c,d]$ s.t the triangle  commutes ($Bq = A$). (This relation is an equivalence relation on the set of isometric immersions from a finite interval to $A$).
 \item If $P$ is an equivalence class and we choose a representative of that class $\gamma : [a,b] \rightarrow A$, we can define $f(P)$ as the equivalence class of $f \gamma : [a, b] \rightarrow A$, pulled tight and scaled so it is an isometric immersion.
 \item A leaf segment of an isometric immersion $\mathbb{R} \rightarrow A$ is the equivalence class of the restriction to a finite interval.
\end{itemize}
\end{defin}

Let $\ell$ be an isometric immersion, then we correspond the $G$-set $I_{\ell}$ (of the leaf segments of $\ell$) to $\ell$. We can also define an equivalence relation on the set of isometric immersions from $\mathbb{R}$  to $A$.\\

\begin{defin}
Let $\ell, \ell '$ be two isometric immersions from $\mathbb{R}$  to $A$, then we say that they are equivalent if $I_{\ell} = G I_{\ell '}$. Namely, we say that they are \textit{equivalent} if for every leaf segment $P$ of $\ell$ there is an element $g \in G$ and $Q$ a leaf segment of $\ell '$ s.t. $P = gQ $ and vice versa (or equivalently every l.s. of $\ell$ is mapped by some $g$ to a l.s. of $\ell '$)
\end{defin}

\begin{remark}
 Here note that it is obvious that if $\ell(t) = g \ell '(t) $ ($\ell, \ell '$ are in the same orbit), then $\ell$ and $\ell '$ are equivalent.\end{remark}

We will prove that if we construct any other line by iterating a neighbourhood of any other periodic point (here we mean that there is $k$ and $g \in G$ s.t. $f^{k}(x) = gx$) then it is equivalent with $\ell$.
\begin{lemma}
Let $y \in A$, be any other $f$-periodic point in the interior of some edge of $A$ and $\ell '$ is the obtained by iterating of some neighborhood of $y$.
Then $\ell$ and $\ell '$ are equivalent.
\end{lemma}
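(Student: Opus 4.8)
The plan is to prove both halves of the equivalence—that every leaf segment of $\ell$ is a $G$-translate of a leaf segment of $\ell'$ and, conversely, that every leaf segment of $\ell'$ is a $G$-translate of one of $\ell$—by one symmetric argument, so I will only set up the first inclusion. After passing to a common period $M$ (a common multiple of the periods of $x$ and $y$) I may assume $f^{M}(x)=x$, $f^{M}(y)=y$, and that both lines obey the scaling relation $\ell(\lambda^{M}t)=f^{M}(\ell(t))$, and likewise $\ell'(\lambda^{M}t)=f^{M}(\ell'(t))$. The first observation is that this relation forces $f^{M}(\ell)=\ell$ and $f^{M}(\ell')=\ell'$ \emph{as subsets} of $A$, since $\lambda^{M}\mathbb{R}=\mathbb{R}$, and that both lines are \emph{legal}: each is an increasing union of tightened images $[f^{kM}(U)]$ of a small sub-arc $U$ contained in the interior of a single edge, and a train track map carries legal paths to legal paths.

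Next I would reduce leaf segments to iterates of a single edge. Write $e_x$ for the edge whose interior contains $x$ and let $U_x$ be the $\epsilon$-neighbourhood used to build $\ell$. Any leaf segment $\ell|_{[a,b]}$ lies inside $\ell|_{[-\lambda^{kM}\epsilon,\,\lambda^{kM}\epsilon]}=[f^{kM}(U_x)]$ for $k$ large, and since $U_x$ is a sub-arc of $e_x$, this is a subpath of $[f^{kM}(e_x)]$. Hence, up to $G$-translation, every leaf segment of $\ell$ is a subpath of $[f^{kM}(e_x)]$ for some $k$; the same statement holds for $\ell'$ with the edge $e_y$ containing $y$.

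The core of the argument is a crossing claim: $\ell'$ crosses every edge of $A$ up to $G$-translation. Since $\ell'$ passes through $y$ in the interior of $e_y$ and there are no vertices interior to that edge, $\ell'$ contains $e_y$ as a subpath. Applying the invariance $f^{M}(\ell')=\ell'$ together with the train track property—the $f^{M}$-image of the legal subpath $e_y$ is, without cancellation, a subpath of $f^{M}(\ell')=\ell'$—I obtain that $\ell'$ contains $[f^{kM}(e_y)]$ as a subpath for every $k$. Because $\phi$ is IWIP its transition matrix is primitive, so for $k$ large $[f^{kM}(e_y)]$ crosses every edge of $A$; this proves the claim and, in particular, exhibits a translate $g\,e_x$ of $e_x$ as a subpath of $\ell'$.

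Finally I would bootstrap. From $g\,e_x\subseteq\ell'$ and $f^{kM}(\ell')=\ell'$, the legality of $\ell'$ ensures that $[f^{kM}(g\,e_x)]=\phi^{kM}(g)\,[f^{kM}(e_x)]$ appears as a genuine subpath of $\ell'$, with no cancellation eating into it. Thus $\ell'$ contains a $G$-translate of $[f^{kM}(e_x)]$ for every $k$, and therefore every leaf segment of $\ell$ is a $G$-translate of a leaf segment of $\ell'$. Exchanging the roles of $(\ell,x,e_x)$ and $(\ell',y,e_y)$ gives the reverse inclusion, so $I_{\ell}=G\,I_{\ell'}$ and the two lines are equivalent. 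The step I expect to require the most care is controlling cancellation: each time a subpath is pushed forward by $f^{kM}$ I must guarantee it survives as an honest subpath of the image line, and this is exactly where the legality of $\ell$ and $\ell'$, combined with the train track (no cancellation within legal paths) property, is indispensable; the clean appeal to primitivity of the transition matrix of an IWIP is the other ingredient I would state carefully.
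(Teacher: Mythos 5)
Your argument is correct and follows essentially the same route as the paper: both use irreducibility (primitivity of the transition matrix) to find a $G$-translate of the edge containing $x$ inside $\ell'$, then propagate this containment through the $f$-iterates via equivariance ($f^{k}(g\cdot)=\phi^{k}(g)f^{k}(\cdot)$), and conclude because every leaf segment of $\ell$ lies in a sufficiently high iterate of that edge, with the converse by symmetry. The paper phrases the propagation as an identity of parametrizations $\ell(\lambda^{kNN'}t)=\phi^{kNN'}(g)\,\ell'(\lambda^{kNN'}\psi(t))$ rather than via your explicit no-cancellation/legality bookkeeping, but the content is the same.
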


\begin{proof}
We will show that any l.s. of $\ell$ is mapped by some element of $G$ to a l.s. of $\ell '$, then the converse follows by symmetry.\\
Since $f$ represents an irreducible automorphism (and the same holds for every power of $f$), $\ell '$ contains some orbit of every edge, so in particular if $x$ is contained in the interior of the edge $e$ we have that there exists some $g \in G$, s.t. $gx \in ge \subseteq \ell '$. So there is an isometry $\psi : (-\epsilon, \epsilon) \rightarrow ( a -\epsilon, a + \epsilon)$ with the property $\ell(t) = g \ell '(\psi (t) )$.\\
Let $N'$ be a natural number s.t. $\ell '( \lambda^{N'} t  ) = f^{N'} (\ell(t))$ and then for any $t \in U$ ($U$ as in the definition) we have that $\ell( \lambda ^{kNN'} t  ) = f^{kNN'} (\ell(t)) = f^{kNN'} (g \ell ' (\psi(t))) = \phi^{kNN'}(g)f^{kNN'} (\ell ' (\psi(t)))  = \phi^{kNN'}(g) \ell'(\lambda^{kNN'} \psi(t))$.\\
But since every prechosen interval is contained in some interval of the form $\lambda ^{kNN'} (-\epsilon, \epsilon)$ for large $k$, we have that for every l.s. of $\ell$ is mapped by some $\phi^{kNN'} (g) \in G$ to some l.s. of $\ell '$.
\end{proof}

We are now in position to define the stable lamination corresponding to $A$. \\

Now the \textbf{stable lamination} in $A$-coordinates $\Lambda = \Lambda_{f}^{+} (A)$ is the equivalence class of isometric immersions from $\mathbb{R}$ to $A$ containing some (and by previous lemma any) immersion obtained as above (by iterating a neighborhood of a periodic point). We call the immersions
representing $\Lambda$ \textbf{leaves} of $\Lambda$ and the leaf segments (l.s.) of some leaf of $\Lambda$ \textbf{leaf segments} of $\Lambda$ (by definition of the equivalence relation, every leaf of $\Lambda$ contains some orbit of every l.s. of $\Lambda$).
\\
Note that the every leaf of the lamination project to the same bi-infinite path in the quotient.\\
We will list some useful properties of the stable lamination.

\begin{prop}
\begin{enumerate}
  \item Any edge of $A$ is a leaf segment of $\Lambda$.
  \item Any $f$-iterate of a leaf segment is a leaf segment.
  \item Any subsegment of a leaf segment is a leaf segment.
  \item Any leaf segment is a subsegment of a sufficiently high iterate of an edge.
  \item For any leaf segment $P$ there is a leaf segment $P'$ such that $f(P') = P$.
  \item Let $a$ be a segment which is the period of the axis of some hyperbolic element which crosses $k$ edges (counted with multiplicity). Then any $f$-iterate of $a$ (pulled tight) can be written as concatenation of less or equal $k$ leaf segments.
\end{enumerate}
\end{prop}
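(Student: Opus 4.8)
The plan is to derive the six properties in an order that lets each build on the previous ones, treating items (i)--(v) as consequences of the explicit construction of $\Lambda$ by iterating a neighbourhood of a periodic point, and reserving the real work for item (vi). Throughout I would use three facts already in hand: that the set of leaf segments of $\Lambda$ is $G$-invariant (since $g\ell$ lies in the same orbit as, hence is equivalent to, any leaf $\ell$, by the Remark preceding the construction), that $f$ maps every legal path isometrically up to the scaling factor $\lambda$ without cancellation (train track property), and that by irreducibility every leaf crosses a $G$-translate of every edge.

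First, for (i) I would take any leaf $\ell$; by irreducibility it crosses a translate $ge$ of the given edge $e$, so $ge$ is a leaf segment, and since the family of leaf segments is $G$-invariant, $e=g^{-1}(ge)$ is one too. Item (iii) is immediate: a subsegment of $\ell|_{[a,b]}$ is $\ell|_{[a',b']}$ for some $[a',b']\subseteq[a,b]$, hence again a restriction of a leaf to a finite interval. For (ii) and (v) the key observation is that $f$ sends leaves to leaves. Indeed, if $x$ is the periodic point used to build $\ell$, then $f(x)$ is again periodic and (after a harmless choice ensuring $f(x)$ lies in the interior of an edge) the line obtained by iterating a neighbourhood of $f(x)$ is exactly $f(\ell)$ tightened and rescaled; by the Lemma on periodic points this line is equivalent to $\ell$, so $f(\ell)$ is a leaf. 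Then $f(P)$, being a subsegment of $f(\ell)$, is a leaf segment, which gives (ii) (and all higher iterates by repetition). For (v) I would run this in reverse: given a leaf segment $P$, realise it (up to a $G$-translate) as a subsegment of the leaf $f(\ell)$; since leaves are legal and $f$ does not cancel within a leaf, that subsegment equals $f(P')$ for a uniquely determined subsegment $P'$ of $\ell$, and $P'$ is the desired leaf segment with $f(P')=P$.

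For (iv) I would return to the construction directly. With $U\subseteq e$ the chosen $\epsilon$-neighbourhood of $x$ inside an edge $e$ and $\ell(\lambda^{N}t)=f^{N}(\ell(t))$, any finite leaf segment $P$ lies in $\ell\big([-\lambda^{kN}\epsilon,\lambda^{kN}\epsilon]\big)=f^{kN}(U)$ for $k$ large, and $f^{kN}(U)\subseteq f^{kN}(e)$; thus $P$ is a subsegment of the iterate $f^{kN}(e)$ of an edge (invoking $G$-invariance to move a translated $P$ back). Combined with (i)--(iii) this shows the leaf segments of $\Lambda$ are precisely the legal subsegments of iterates of edges, a characterisation I would exploit in (vi).

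The hard part will be (vi), and the plan there is an induction on the iterate together with the train track and bounded-cancellation control of illegal turns. The base case is trivial: if $a$ crosses the edges $e_1\cdots e_k$, then $a$ is already a concatenation of the $k$ leaf segments $e_i$ from (i). For the inductive step, suppose $[f^{n}(a)]=Q_1\cdots Q_m$ with $m\le k$ and each $Q_j$ a leaf segment; then $[f^{n+1}(a)]=[f(Q_1)\cdots f(Q_m)]$, and each $f(Q_j)$ is a leaf segment by (ii). The crucial point is that passing to the reduced path only deletes material at the $m$ (cyclic) junctions: cancellation shortens the adjacent $f(Q_j)$'s, and by (iii) what survives on each side is still a leaf segment, while whole factors can only be consumed, never created. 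Hence the reduced path decomposes, splitting at the junction points, into at most $m\le k$ leaf segments, completing the induction. I expect the only genuine subtlety to be bookkeeping the cancellation at junctions so that the count of pieces provably does not increase; here I would use that $f$ creates no new illegal turns in the interior of an $f$-image (train track property), so all splitting occurs at the images of the original $k$ junctions and the bound $k$ is preserved.
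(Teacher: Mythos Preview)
Your arguments for (i)--(v) are correct and essentially match the paper's, with one minor variation: for (v) the paper bypasses the direct use of $f(\ell)$ and instead invokes (iv) to embed $P$ in some $f^{m}(P'')$, then takes $P'$ to be the appropriate subsegment of $f^{m-1}(P'')$. Your route via the leaf $f(\ell)$ works equally well once you use equivariance $g\,f(P')=f(\phi^{-1}(g)P')$ to clear the stray $G$-translate, which you do implicitly.

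The real divergence is in (vi), where you do much more work than is needed. The paper disposes of it in one line: write $a=e_1\cdots e_k$, apply $f^{n}$ directly to obtain the concatenation $f^{n}(e_1)\cdots f^{n}(e_k)$ of $k$ leaf segments (by (i) and (ii)), and then observe that tightening only shortens or deletes these pieces (by (iii)), never increasing their number. There is no need for induction on $n$, and hence no need to track cancellation through successive applications of $f$; the illegal turns in $[f^{n}(a)]$ all sit at (images of) the original $k$ junctions simply because the $f^{n}(e_j)$ are legal from the outset. Your inductive argument is not wrong, but it obscures this immediacy and introduces bookkeeping about junction cancellation that the direct approach makes unnecessary.
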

\begin{proof}

\begin{enumerate}
  \item This is clear by the proof of the previous lemma, since $f$ represents an irreducible automorphism and this implies that every $\ell$ contains orbits of every edge, so if $ge$ is contained in $\ell$ then $e$ is contained in $g^{-1} \ell $ which is equivalent to $\ell$ thus is a leaf of $\Lambda$, and as consequence $e$ is leaf segment of a leaf therefore it is l.s. of $\Lambda$.
  \item Firstly, we note that if $x$ is $f$-periodic then $f(x)$ is $f$- periodic with the same period(in fact every $f^{m}(x)$ is periodic) and let's denote $\ell '$ the isometric immersion constructed as above, so if $P$ is a l.s. of $\ell$, then $f(P)$ is a l.s. of $\ell '$ but since $\ell,\ell '$ are equivalent by lemma, we have that $\ell '$ is a leaf of $\Lambda$ and therefore $f(P)$ is a l.s. of $\Lambda$. So we can do it for every iterate of $f$.
  \item This is obvious, since we restrict the isometric immersion to the subsegment and it is a l.s. of a leaf of $\Lambda$ and as a consequence a l.s. of $\Lambda$.
  \item We have that $f$ expands the length of every edge by $\lambda$, but we can use for representative the isometric immersion constructed as above (by iterating a periodic neighborhood) and the edge in which the periodic point belongs, then by construction of $\ell$ every l.s. is contained in an high iterate of this edge. For any other representative $\ell '$ now we can translate $\ell$ as above (by some element $g \in G$) to have a common segment that contain the prechosen l.s. and the proof reduced to the first case.
  \item Let $P$ be a l.s. of $\Lambda$. By (iv) we have that there exists some iterate of an edge and so by $\ell$ an iterate of a l.s. $P''$ s.t. $P$ is contained in $f^{m}(P'')$ and since iterates of l.s. are l.s. and subsegments are l.s. as well, we have that there is $P'$ subsegment of $f^{m-1} (P'')$ with the property $P = f(P') $
  \item This is obvious since edges are l.s. and $f$-iterates of l.s. are l.s..
\end{enumerate}
\end{proof}

We note that (ii) implies that $f^k (\ell)$ is a leaf of the lamination, for every $k$.

\begin{defin}
We say that a sequence ${a_{i}}$ of isometric immersions $[0,1]_{i} \rightarrow A$ (where the metric on $[0,1]_{i}$ is scalar multiple of the standard part which depends on i), (weakly) converges to $\Lambda$,\\
if for every $L > 0$ the ratio,
\begin{equation*}
\frac{m(\{x \in [0,1]_{i} | \text{the L- nbhd of x is a leaf segment} \})}{m([0,1]_{i})}
\end{equation*}
converges to 1.
\end{defin}

\begin{prop}\label{per1}
Suppose that $a$ is a segment in $A$ which corresponds to the period of the axis of some hyperbolic element, which is not N-periodic. Then the sequence (of tightenings of $f^{i}(a)$), $[f^{i} (a)]$ weakly converges to $\Lambda$.
\end{prop}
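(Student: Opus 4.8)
The plan is to build directly on part (vi) of the preceding proposition, which already asserts that each tightened iterate $[f^i(a)]$ is a concatenation of at most $k$ leaf segments, where $k$ is the (fixed, $i$-independent) number of edges crossed by $a$. So I would write $[f^i(a)] = P_1^{(i)} \cdots P_{m_i}^{(i)}$ with $m_i \le k$ and each $P_j^{(i)}$ a leaf segment of $\Lambda$. The key observation is that a point $x$ whose $L$-neighbourhood lies entirely inside a single piece $P_j^{(i)}$ is automatically $L$-good: its $L$-neighbourhood is then a subsegment of a leaf segment, hence itself a leaf segment by part (iii). Therefore the only points that can fail the $L$-good condition lie within distance $L$ of one of the at most $k-1$ junction points between consecutive pieces (plus an $O(L)$ contribution near the endpoints), so the total measure of the ``bad'' set is at most $2Lk$, a bound depending on $L$ and $k$ but \emph{not} on $i$. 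The defining ratio in the weak-convergence definition is then at least $1 - 2Lk/\mathrm{length}([f^i(a)])$, and the whole statement reduces to proving that $\mathrm{length}([f^i(a)]) \to \infty$.

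Establishing this length divergence is where the hypothesis that $a$ is not N-periodic and the critical constant enter, and this is the step I expect to require the most care. First I would decompose each $[f^i(a)]$ into maximal legal subsegments separated by illegal turns. Since $f$ is a train track map it sends legal paths to legal paths, so no illegal turn is created in the interior of a legal segment and the number of illegal turns is non-increasing under iteration-and-tightening; hence it stabilises to some $m_0$ for $i \ge i_0$. From there I would split into two cases. If some maximal legal segment ever exceeds the critical constant $C_{crit} = \frac{2BCC(f)}{\lambda - 1}$, then by the critical-constant estimate recalled in Section 2 its length, and hence that of the whole path, grows without bound under further iteration, giving $\mathrm{length}([f^i(a)]) \to \infty$. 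Otherwise every maximal legal segment stays bounded by $C_{crit}$ for all $i$, and since the number of such segments is at most $m_0+1$, the total length of $[f^i(a)]$ stays bounded.

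The hard part will be ruling out this second case. I would argue that if $\mathrm{length}([f^i(a)])$ stays bounded, then, using the fact recalled in the N-periodic-paths subsection that there are only finitely many paths of a given length up to projection to the quotient $G/A$, the pigeonhole principle forces two distinct iterates $[f^j(a)]$ and $[f^{j'}(a)]$ to be equivalent; tracking the ($f$-periodic) endpoints lying on the axis then shows $a$ is N-periodic, contradicting the hypothesis. Hence only the first case occurs and the length diverges. Combining the two ingredients, for each fixed $L$ the proportion of $[f^i(a)]$ whose $L$-neighbourhood is a leaf segment is at least $1 - 2Lk/\mathrm{length}([f^i(a)]) \to 1$, which is exactly the definition of weak convergence of $[f^i(a)]$ to $\Lambda$. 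The only genuinely delicate points are the critical-constant dichotomy and the clean identification of the bounded-length case with N-periodicity.
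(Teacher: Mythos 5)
Your proposal is correct and follows essentially the same route as the paper: the paper likewise bounds the ``bad'' set by $2Lk$ using the non-increasing number of illegal turns under a train track map, rules out bounded lengths of $[f^{i}(a)]$ by the finiteness of inequivalent paths of bounded length (forcing N-periodicity of $a$, a contradiction), and then invokes the critical constant $C_{crit}$ to get length divergence. The only difference is cosmetic ordering --- you state the $C_{crit}$ dichotomy before the pigeonhole step, the paper after --- so no further comment is needed.
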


Note that such hyperbolic elements always exist. For example the basis elements of the free group, are not  N-periodic by definition of irreducibility. 

\begin{proof}
Suppose that $a$ can be written as a concatenation of $k$ l.s. then we have $k-1$ illegal turns (we don't count the endpoints) and since $f$ is a train track map we have that the number of illegal turns in $[f^k (a)]$ is non-increasing so it contains less than or equal to $k-1$  l.s.. Therefore if the lengths of reduced iterates of $a$ is bounded, and since there are finitely many inequivalent paths with length less than or equal to a specific number, we have that $a$ is N-preperiodic and therefore periodic because $a$ corresponds to a group element, which leads to a contradiction to the hypothesis. Therefore some $[f^{i}(a)]$ contains arbitarily long legal segments ($> C_{crit}$), and since the length of $[f^{j}(a)]$ expands for large $j$,  we have that there are finitely many $L$-nbds contain points without the requested property (of the endpoints of the concatenation of l.s. so at most $k$) and the measure of these is at most $2Lk$, as a consequence the ratio converges to $1$.
\end{proof}

\begin{defin}
An isometric immersion $l : \mathbb{R} \rightarrow A$ is quasiperiodic  (qp), if for every $L > 0$ there exists $L' > 0$ s.t. for every l.s. $P$ of $\ell$ of length $L$ and for every l.s. $Q$ of length $L'$ there is $g \in G$ s.t. $gP \subseteq Q$ ($P$ is mapped by $g$ to a subsegment of $Q$).
\end{defin}

\begin{prop}
Every leaf of $\Lambda$ is quasiperiodic.
\end{prop}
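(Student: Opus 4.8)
The plan is to follow the Bestvina--Feighn--Handel argument, whose two ingredients are a \emph{primitivity} (mixing) property of the train track map and the packing properties of $\Lambda$ recorded in the previous Proposition. Fix a leaf $\ell$ and $L>0$; I must produce $L'$ so that every length-$L$ leaf segment $P$ of $\ell$ has a $G$-translate inside every length-$L'$ leaf segment $Q$.

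\emph{Step 1 (short segments sit inside a bounded iterate of every edge).} Because every power of $\phi$ is irreducible, there is $p$ such that for every edge $e$ the legal path $f^{p}(e)$ crosses a $G$-translate of every edge of $A$; this is the same mechanism used in part (i) of the Proposition, where irreducibility forces every leaf to contain an orbit of every edge. On the other hand, by property (iv) every leaf segment of length $\le L$ is a subsegment of a sufficiently high iterate of an edge; combining this with the finiteness of length-$\le L$ leaf segments up to the $G$-action (discussed as the obstacle below), I obtain a single $m_1$ and the base edge $e_0$ (the one containing the periodic point) so that every $P$ with $|P|\le L$ has a translate inside $f^{m_1}(e_0)$. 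Set $m_0:=p+m_1$. Since $f$ is a train track map, $f^{m_1}$ carries a subpath of $f^{p}(e)$ which is a translate of $e_0$ to the corresponding translate of $f^{m_1}(e_0)$ inside $f^{m_0}(e)$ \emph{without cancellation}, so I conclude: for every edge $e$ the path $f^{m_0}(e)$ contains a $G$-translate of every leaf segment of length $\le L$.

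\emph{Step 2 (long segments contain a full bounded iterate of an edge) and conclusion.} Let $Q$ be any leaf segment with $|Q|\ge L'$, where $L':=2\max_{e}\abs{f^{m_0}(e)}+1$. By property (iv), $Q$ is a subsegment of $f^{M}(e'')$ for some edge $e''$ and some $M\ge m_0$. Writing $f^{M}(e'')=f^{m_0}\bigl(f^{M-m_0}(e'')\bigr)$ and applying $f^{m_0}$ edge-by-edge to the reduced legal path $f^{M-m_0}(e'')$, the train track property again gives no cancellation, so $f^{M}(e'')$ is tiled by consecutive $G$-translates of the finitely many paths $f^{m_0}(e)$, each of length at most $\tfrac12(L'-1)$. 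A subsegment of length $\ge L'$ of such a tiling must contain at least one complete tile, so $Q$ contains a translate $g_1 f^{m_0}(e)$ for some edge $e$; by Step 1 this path contains a translate $g_1g_2 P$ of the given $P$, whence $g_1g_2 P\subseteq Q$, which is exactly quasiperiodicity.

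\emph{Main obstacle.} The only genuinely new point compared with the free case is the finiteness invoked in Step 1. In the free case the quotient is a finite graph and there are automatically finitely many leaf segments of bounded length up to $G$; here the quotient is a graph of groups, and the remark of the $N$-periodic paths subsection only gives finiteness up to the coarser equivalence ``having the same projection in $G/A$''. Two lifts of one quotient path need not be $G$-translates, since the leaf may cross a non-free vertex through edges lying in different cosets of the vertex group $G_i$. The hard part is therefore to upgrade this to finiteness up to the $G$-action, i.e. to show that the leaves of $\Lambda$ cross only finitely many $G$-orbits of turns; once this is known, triviality of the edge stabilisers forces each turn-orbit together with a fixed first edge to determine the next edge uniquely, so a length-$\le L$ leaf segment is pinned down up to $G$ by its bounded sequence of edge- and turn-orbits. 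I would derive the finiteness of taken turn-orbits from the train track structure: the turns occurring in $f^{k}(e_0)$ are either internal to some $f(\text{edge})$ or images under the induced turn map $Tf$ of turns already present, so they all lie in the forward $Tf$-orbit of the finitely many turns appearing in $\{f(e)\}$, which for a train track representative of an IWIP I expect to be finite.
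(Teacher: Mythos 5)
Your overall strategy is the same as the paper's (and as Bestvina--Feighn--Handel's): show that every leaf segment of length at most $L$ has a $G$-translate inside $f^{m_0}(e)$ for every edge $e$ and a uniform $m_0$, and that every sufficiently long leaf segment contains a full translate of some $f^{m_0}(e)$. Your tiling argument in Step 2 is a correct and somewhat more explicit version of what the paper achieves by reducing to two-edge leaf segments and rescaling by powers of $\lambda$, and you have correctly located the one point where the free-product setting genuinely differs from the free case: finiteness of quotient paths of bounded length does not give finiteness of $G$-orbits of leaf segments of bounded length when some $G_i$ is infinite.

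That point, however, is exactly where your proposal has a genuine gap, and your sketched resolution does not close it. At a non-free vertex $v$ with infinite stabiliser $G_i$, the $G$-orbits of ordered turns with germs in prescribed edge-orbits are in bijection with $G_i$ itself: trivial edge stabilisers pin down the relative element $u\in G_i$ of a turn $(d,\,u d')$, so the set of turn-orbits is infinite. The map induced by $Tf$ on these orbits sends $u$ to an element of the form $h^{-1}\phi_i(u)h'$, where $\phi_i$ is the induced factor automorphism and $h,h'\in G_i$ are fixed by the combinatorics of $f$; eventual periodicity of the sequence of edge-orbits therefore does not force eventual periodicity of the turn-orbit, and the forward $Tf$-orbit of a single turn-orbit can a priori be infinite. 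So the clause ``which I expect to be finite'' is precisely the statement that needs proof, and it requires an argument specific to the train track representative (or to which turns leaves actually cross), not just $G$-equivariance and the finiteness of edge-orbits. To be fair, the paper's own proof assumes the same finiteness silently when it asserts a single $N$ such that $f^{N}$ of every edge crosses an orbit of every turn crossed by leaves of $\Lambda$; but in a complete write-up this is the lemma you must actually establish, and as it stands neither your Step 1 nor your closing paragraph does so.
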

\begin{proof}
We will first prove it for some $\ell$ which has constructed by iterating neighbourhood of a periodic point.\\
We first verify it for leaf segments $\Pi$ that consists of only two edges. \\
If we choose $L_0 > 2 max_{e}(len(e))$, then if a l.s. $P$ has length $\geq  L_0$, then it contains a subleaf segment which is an edge. Then there is $N$(we can also choose it to be multiple of $k$) s.t. $f^{N}$ restricted to any edge crosses some orbit of every turn that they crossed by leaves of $\Lambda_{f}^{+}(A)$. So in particular for the chosen $\Pi$ the iterate of $f$ takes the orbit of that turn, so there exists $g \in G$ such that $\Pi \subseteq g f^{N}(P)$.\\
Now if $P'$ is any l.s. of length $\lambda^{N} L_{0}$, then $P' = f^{N}(P)$ for some $P$ l.s. of length $L_{0}$ and therefore $\Pi \subseteq g P'$.\par
For the general case, let $L > 0$ be given, then there is $M > 0$ (we choose it to have the property $\lambda^{-M}L < 2 min(len(e))$) s.t. any l.s. of length $\leq \lambda^{-M}L$ is a subsegment of a two-edge l.s. as above and let $L' = \lambda^{M+N} L_0$.\\
So let $P$ be a l.s. of length $L$ and $P'$ be a l.s. of length $L'$. Then by the properties we have that $P = f^{M}(\Pi)$ where $\Pi$ is contained to a l.s. as in the special case(by the choice of $M$, since $\Pi$ has length $\lambda^{-M}L$), and similarly $P' = f^{M}(\Pi ')$ for a l.s. $\Pi '$ of length $\lambda^{N}L_{0}$. By the special case we have that $\Pi \subseteq g \Pi '$ and this implies that $P = f^{M}(\Pi) \subseteq \Phi^{M}(g) f^{M}(\Pi') = \Phi^{M}(g)P'$. Since $\ell$ is $\Phi^{M}(g)$-invariant, we have the requested property.\\
For any other equivalent isometric immersion $\ell '$, if we have $P$ l.s. of length $L$ and $Q$ l.s. of length $L'$ then we can find an isometric immersion $\ell$ like the first case with $Q$ as common segment. Then by the equivalence there exists $g_{1} \in G$ s.t. $g_{1}P$ is l.s. of $\ell$, and by quasiperiodicity of $\ell$, there is $g_{2}$ s.t. $g_{2}P \subseteq Q$ and $g_2 P$ is a l.s. of $\ell '$, so we have that $\ell '$ is quasiperiodic.
\end{proof}

\subsection{Lamination in every tree}

Suppose that $f : A \rightarrow A$ and $\Lambda_{f}^{+}(A)$ as above and $B \in \mathcal{O}$. Then we know that there exists an optimal map(in particular $\mathcal{O}$ -map) $\tau : A \rightarrow B$. Then for any immersion $\ell : \mathbb{R} \rightarrow A$ we denote by $\tau (\ell) : \mathbb{R} \rightarrow B$ the unique (up to precomposition by an isometry of $\mathbb{R}$) pulled tight to be the isometric immersion corresponding to $\tau \ell$.
\begin{lemma}
\begin{itemize}
  \item If $\ell, \ell ' : \mathbb{R} \rightarrow A $ are equivalent leaves, then $\tau (\ell), \tau (\ell ')$ are equivalent.
  \item If $\ell$ is quasiperiodic, then $\tau (\ell)$ is quasiperiodic.
\end{itemize}
\end{lemma}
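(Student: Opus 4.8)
The plan is to push both properties through $\tau$ using two standard features of $\mathcal{O}$-maps. First, $\tau$ is $G$-equivariant, so tightening commutes with the (isometric) $G$-action on $B$, giving $[\tau(gP)] = g\,[\tau(P)]$ for every finite subsegment $P$ and every $g \in G$. Second, being Lipschitz, $\tau$ has a finite bounded cancellation constant $B = BCC(\tau)$ (finite by the bounded cancellation lemma, since $BCC(\tau) \ls Lip(\tau)\,qvol(A)$). The single device underlying both bullets is a dictionary between leaf segments of $\tau(\ell)$ and tightened images $[\tau(P)]$ of leaf segments $P$ of $\ell$. Writing $\ell$ as a concatenation of a leaf segment $P$ with its two complementary tails, the bounded cancellation lemma shows that, when forming $\tau(\ell) = [\tau\ell]$, the reduced image $[\tau(P)]$ agrees with the corresponding portion of $\tau(\ell)$ except within distance $B$ of the two endpoints. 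Consequently: (a) every finite subsegment $R$ of $\tau(\ell)$ lies inside $[\tau(P)]$ for some leaf segment $P$ of $\ell$, at distance at least $B$ from each endpoint of $[\tau(P)]$ (choose $P$ long enough, with margin); and conversely (b) the portion of $[\tau(P)]$ obtained by deleting a length-$B$ collar at each end is a genuine leaf segment of $\tau(\ell)$. Since $\tau$ is a quasi-isometry (remark after Theorem~\ref{O-maps}), the length of $[\tau(P)]$ is comparable to that of $P$, which lets me convert bounds on $A$-lengths into bounds on $B$-lengths and back.

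For the first bullet, let $R$ be a leaf segment of $\tau(\ell)$. By (a) choose a leaf segment $P$ of $\ell$ with $R \subseteq [\tau(P)]$ and $R$ at distance $\gr B$ from the ends of $[\tau(P)]$. Equivalence of $\ell$ and $\ell'$ yields $g \in G$ and a leaf segment $Q$ of $\ell'$ with $P = gQ$; by $G$-equivariance $[\tau(P)] = g\,[\tau(Q)]$, so $g^{-1}R$ lies in $[\tau(Q)]$ at distance $\gr B$ from its ends. Applying (b) to $\ell'$, the segment $g^{-1}R$ is a leaf segment of $\tau(\ell')$, so $R$ is a $G$-translate of a leaf segment of $\tau(\ell')$. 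The reverse inclusion follows by symmetry, whence $\tau(\ell)$ and $\tau(\ell')$ are equivalent.

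For the second bullet, fix $L > 0$ and let $R$ be a length-$L$ leaf segment of $\tau(\ell)$. By (a) and the quasi-isometry estimate, $R$ sits with $B$-margins inside $[\tau(P)]$ for a leaf segment $P$ of $\ell$ whose $A$-length is bounded by some $L_A$ depending only on $L$. Quasiperiodicity of $\ell$ furnishes $L_A'$ so that every length-$L_A$ leaf segment of $\ell$ is carried by some $g \in G$ into every length-$L_A'$ leaf segment of $\ell$. I then choose $L'$ large enough that, via (a) and the quasi-isometry estimate, every length-$L'$ leaf segment $S$ of $\tau(\ell)$ contains, with $B$-margins, an image $[\tau(Q)]$ for a leaf segment $Q$ of $\ell$ of $A$-length $\gr L_A'$. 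Then $gP \subseteq Q$ for a suitable $g$, so $g\,[\tau(P)] = [\tau(gP)] \subseteq [\tau(Q)]$, and after absorbing the cancellation collars into the choice of $L'$ one gets $gR \subseteq S$. This is exactly quasiperiodicity of $\tau(\ell)$.

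The main obstacle is the bookkeeping of the length-$B$ cancellation collars introduced at each passage through $\tau$: one must check that these margins are absorbed by taking the relevant segments long enough, and that the quasi-isometry constants relating $A$-lengths and $B$-lengths are applied consistently, so that the final thresholds $L'$ (and $L_A$, $L_A'$) depend only on the given $L$. None of this is deep, but it is where all the care is required; everything else is a formal consequence of $G$-equivariance together with the dictionary (a)--(b).
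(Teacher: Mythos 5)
Your argument is correct, but it takes a genuinely different route from the paper. The paper invokes the factorisation (from \cite{FM}) of an optimal map $\tau$ as a homeomorphism followed by a finite sequence of folds, and then verifies both bullets separately for a homeomorphism and for a single equivariant fold, where one can track leaf segments through $\tau$ essentially exactly (for a fold, a leaf segment of $[\tau(\ell)]$ has an explicit preimage built from the folded turn). You instead work directly with an arbitrary $\mathcal{O}$-map, replacing that exact tracking by the approximate dictionary (a)--(b) coming from bounded cancellation, together with $G$-equivariance of tightening and the quasi-isometry comparison of lengths. What your approach buys is robustness and economy: it needs no factorisation result, applies verbatim to any $\mathcal{O}$-map (not just optimal/PL ones), and isolates the only two facts that matter ($[\tau(gP)]=g[\tau(P)]$ and finiteness of $BCC(\tau)$). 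What the paper's approach buys is that for each elementary map the correspondence between leaf segments upstairs and downstairs is exact, so no cancellation collars need to be absorbed; the price is an induction over the fold sequence and a dependence on the decomposition theorem. The one place in your write-up that genuinely carries weight is the verification of (a)--(b) itself --- that the $B$-collared middle of $[\tau(P)]$ survives into $\tau(\ell)$ and that these middles exhaust $\tau(\ell)$ as $P$ exhausts $\ell$ --- and, in the quasiperiodicity step, the need to pad $P$ by a collar before invoking quasiperiodicity of $\ell$ so that the translate $gR$ lands at distance at least $B$ from the ends of $[\tau(Q)]$ and hence inside the target segment $S$; you flag exactly this bookkeeping, and it does go through.
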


\begin{proof}
Every optimal map $\tau$ by \cite{FM}, can be factored as the composition of a homeomorphism and a finite sequence of folds. We have just to prove that the lemma is true for homeomorphism and folds.\\
Firstly, let suppose that $\tau$ is homeomorphism. In particular $[\tau(\ell)] = \tau (\ell)$ and the same holds for $\ell '$ as well.\\
Let $P'$ is a l.s. of $\tau (\ell)$, then there is some l.s. of $\ell$ $P$ s.t. $P' = \tau (P)$, so there is a translation of $P$ by some element of the group, $gP$ which is contained in $\ell '$, therefore $\tau(gP) = g P' $ is contained in $\tau (\ell ')$. The converse follows by symmetry and so $\tau (\ell)$ and $\tau (\ell ')$ are equivalent.\\
Suppose now that $\ell$ is quasiperiodic, fix a $L > 0$ let $P'$ l.s. of $\tau (\ell)$ of length $L$. Then there is a l.s. $P$ of length at most $ K$ (by Bounded Cancellation Lemma there exists such $K$ which doesn't depend on $P$ but only on $L$) s.t. $P' = \tau (P)$. Then we can define $L'' = L' Lip( \tau)$, where $L'$ is the constant corresponding by quasiperiodicity to $K$ and we have that if we choose any $Q'$ l.s. of $\tau (\ell)$ of length $L''$ then there exists a l.s. $Q$ of $\ell$ of length at least $L'$ s.t. $\tau (Q) = Q'$. Then $Q$ contains orbits of any l.s. of length at most $K$, in particular it contains some translation of $P$ for some $g \in G$ and therefore as above $Q'$ contains some translation  of $P'$. So $\tau (\ell)$ is quasiperiodic.\\
We suppose that $\tau$ is an equivariant isometric simple fold of some segments starting from the same point $v$ and has the same $\tau$ - image, let call them $a,b$ and $c$ be the corresponding segment in the quotient.\\
For the first statement, we note that is obvious for a l.s. of $[\tau(\ell)]$ which don't contain some orbit of $c$, since there $\tau$ is the identity. On the other hand, if $P'$ is l.s. of $\tau (\ell)$ which contains some orbits of $c$, then there exists $P$ which contain the same number of orbits as the folded turn and $[\tau(P)] = P'$ (it is concatenation of the segments before and after the folds). Since $\ell, \ell '$ are equivalent we have that we can find $g \in G$ s.t. $gP$ is contained in $\ell '$, then $[\tau(gP)]$ is a a l.s. of $[\tau(\ell ')]$. But $[\tau(gP)]$ is just a translation (by $g$) of $\tau(P)$, and therefore as above we obtain that $[\tau(\ell)], [\tau(\ell ')]$ are equivalent.\\
For the quasiperiodity of $[\tau(\ell)]$ we fix a number $L>0$ and we call $M$ the maximum number of orbits of $v$ which there are in a segment of length $L$, and $L'$ is the number corresponds by quasiperiodicity for  $L'' = L + 2 M len(a)$. Now let $P'$ be a l.s. of length $L$, then there is $P$ which contains the same number of orbits of the folded turn and $[\tau(P)] = P'$ as above. Then $P$ has length at most $L''$, some translation of it is contained in every l.s. of $\ell$ of length $L'$. Now let choose $Q$ any l.s. of $[\tau (\ell)]$ of length $L'$ then the preimage has length at least $L'$, and therefore the preimage has the requested property. So $Q$ contains a translation of $P'$ as above.
\end{proof}

\begin{defin}
The stable lamination of $f : B \rightarrow B$ in the $B$-coordinates is the equivalence class $\Lambda_{f}^{+}(B)$ containing $\tau (\ell)$ for some (and by previous lemma any) leaf of $\Lambda _{f}^{+}(A)$.
\end{defin}
Using again the property that $\tau$ is factored as the composition of a homeomorphism and a finite sequence of folds combined with the result for the
$\Lambda_{f}^{+}(A)$, we have the following proposition.

\begin{prop}\label{per2}
Let $a$ be a segment which is the period of the axis of a hyperbolic element in $A$, which is not N- periodic. Then the sequence $\{[\tau(f^{i}(a)) ] \}$ weakly converges to $\Lambda_{f}^{+}(B)$
\end{prop}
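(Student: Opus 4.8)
The plan is to transport the weak convergence $[f^{i}(a)]\to\Lambda_{f}^{+}(A)$ of Proposition \ref{per1} across the $\mathcal{O}$-map $\tau$, using that $\tau$ carries leaf segments of $\Lambda_{f}^{+}(A)$ to leaf segments of $\Lambda_{f}^{+}(B)$ (from the preceding Lemma together with the definition of $\Lambda_{f}^{+}(B)$) and that the loss of length under tightening is controlled by the Bounded Cancellation Lemma. First I would record the structural input: if $a$ crosses $k$ edges, then by the sixth of the listed properties of $\Lambda_{f}^{+}(A)$ each tightened iterate is a concatenation of at most $k$ leaf segments, say $[f^{i}(a)]=Q^{i}_{1}\cdots Q^{i}_{m_i}$ with $m_i\le k$. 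Since $a$ is not N-periodic, the lengths of $[f^{i}(a)]$ tend to infinity (this is exactly the dichotomy established inside the proof of Proposition \ref{per1}), and since every $\mathcal{O}$-map is a quasi-isometry, the lengths of $[\tau(f^{i}(a))]$ tend to infinity as well.

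Next I would push the decomposition through $\tau$, writing $[\tau(f^{i}(a))]=[\,\tau(Q^{i}_{1})\cdots\tau(Q^{i}_{m_i})\,]$. Each $Q^{i}_{j}$ is a finite piece of a leaf $\ell$ of $\Lambda_{f}^{+}(A)$, so $\tau(\ell)$ tightens to a leaf of $\Lambda_{f}^{+}(B)$; embedding $Q^{i}_{j}$ in $\ell$ and applying the Bounded Cancellation Lemma shows that $[\tau(Q^{i}_{j})]$ contains, as the subsegment obtained by deleting at most $BCC(\tau)$ from each end, a genuine leaf segment of $\Lambda_{f}^{+}(B)$. Moreover the only place where the global tightening of the product can differ from tightening the pieces separately is at the $m_i-1\le k-1$ junctions, each of which absorbs length at most $2\,BCC(\tau)$. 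Consequently $[\tau(f^{i}(a))]$ contains at most $k$ disjoint leaf segments of $\Lambda_{f}^{+}(B)$ whose union omits a subset of total length at most a constant $C_0$ depending only on $k$ and $BCC(\tau)$, and crucially independent of $i$.

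Finally I would estimate the weak-convergence ratio. Fix $L>0$. Since subsegments of leaf segments are again leaf segments (the third listed property, transported to $B$), a point $y\in[\tau(f^{i}(a))]$ can fail to have its $L$-neighbourhood a leaf segment of $\Lambda_{f}^{+}(B)$ only if $y$ lies in one of the omitted junction subpaths or within distance $L$ of one of the $\le k-1$ junctions or of the two endpoints; hence the bad set has length at most $(k+1)(2L+C_0)$, a bound independent of $i$. Therefore
\begin{equation*}
\frac{m(\{\,y : L\text{-nbhd of } y \text{ is a leaf segment of } \Lambda_{f}^{+}(B)\,\})}{m([\tau(f^{i}(a))])}\ \ge\ 1-\frac{(k+1)(2L+C_0)}{m([\tau(f^{i}(a))])}\ \xrightarrow{\,i\to\infty\,}\ 1,
\end{equation*}
because the denominator tends to infinity. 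This is precisely weak convergence of $\{[\tau(f^{i}(a))]\}$ to $\Lambda_{f}^{+}(B)$.

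I expect the main obstacle to be the bookkeeping in the second step: one must ensure that pushing the boundedly many leaf segments of $\Lambda_{f}^{+}(A)$ through $\tau$ yields leaf segments of $\Lambda_{f}^{+}(B)$ up to an error that does \emph{not} grow with $i$, i.e. that neither the cancellation at the junctions nor the folds hidden in $\tau$ inflate the number of pieces or the discarded length. This is exactly what property (vi) (bounding the number of pieces by $k$) and the Bounded Cancellation Lemma (bounding each junction loss by $2\,BCC(\tau)$) are designed to prevent; the alternative suggested in the text, of factoring $\tau$ into a homeomorphism and finitely many folds and checking each factor against $\Lambda_{f}^{+}(A)$ separately, disposes of the same difficulty fold-by-fold.
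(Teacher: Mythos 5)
Your argument is correct, but it is not the route the paper takes: the paper disposes of this proposition in one line by factoring $\tau$ as a homeomorphism followed by finitely many folds and transporting the weak convergence of Proposition \ref{per1} through each elementary factor, reusing exactly the machinery it just set up to prove that equivalence and quasiperiodicity are preserved by $\tau$. You instead treat $\tau$ as a black box and combine property (vi) of the lamination (at most $k$ leaf-segment pieces in every $[f^{i}(a)]$, with $k$ independent of $i$), the Bounded Cancellation Lemma (each piece and each junction loses at most a fixed amount of length under $\tau$), and the quasi-isometry property of $\mathcal{O}$-maps (so $\mathrm{len}[\tau(f^{i}(a))]\to\infty$). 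What your approach buys is a self-contained, quantitative proof with an explicit $i$-independent bound on the bad set, and it avoids having to re-verify that weak convergence survives a single fold; what the paper's approach buys is economy, since the fold-by-fold reduction was already paid for in the preceding lemma. The only points needing a little care in your write-up are (a) the justification that trimming $BCC(\tau)$ from each end of $[\tau(Q^{i}_{j})]$ lands inside the embedded line $[\tau(\ell_j)]$ (a nearest-point-projection argument in the tree $B$, standard but worth stating), and (b) the bad-set bound, where your expression $(k+1)(2L+C_0)$ double-counts slightly; this is harmless, since any bound independent of $i$ suffices once the denominator tends to infinity.
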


\begin{lemma}
Suppose that $h : B \rightarrow B$ is any other train track map representing $\Phi$. Then $\Lambda _{f}^{+}(B) = \Lambda _{h}^{+}(B)$
\end{lemma}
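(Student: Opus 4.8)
The plan is to realise both $\Lambda_f^+(B)$ and $\Lambda_h^+(B)$ as weak limits of one and the same sequence of axis-periods in $B$, and then to argue that such a weak limit is unique. First I would fix a hyperbolic element $w \in G$ whose axis period is not N-periodic; such a $w$ exists by the remark following Proposition \ref{per1} (for instance a basis element of the free part, by irreducibility). The point worth stressing is that N-periodicity of the period of $\mathrm{axis}(w)$ is equivalent to $\phi^k(w)$ being conjugate to $w$ for some $k \geq 1$: the period of the axis of an element projects in the quotient to the loop determined by its conjugacy class, so being equivalent to its $k$-th reduced iterate is a statement about the conjugacy class alone. Consequently the chosen $w$ is non-N-periodic simultaneously for $f$ (in $A$) and for $h$ (in $B$), and for every representative, which is exactly what lets a single $w$ feed both sides.

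Next I would identify the two sequences. Writing $a$ and $b$ for the periods of $\mathrm{axis}_A(w)$ and $\mathrm{axis}_B(w)$, the maps $\tau \circ f^i : A \to B$ and $h^i : B \to B$ are both $\mathcal{O}$-maps representing $\phi^i$ (for $\tau\circ f^i$ this is the equivariance computation $(\tau\circ f^i)(gx)=\phi^i(g)\,(\tau\circ f^i)(x)$, using that $\tau$ is $G$-equivariant). An $\mathcal{O}$-map representing $\phi^i$ carries $\mathrm{axis}(w)$, after tightening, onto $\mathrm{axis}(\phi^i(w))$, so both $[\tau(f^i(a))]$ and $[h^i(b)]$ equal the period of $\mathrm{axis}_B(\phi^i(w))$; up to the choice of fundamental domain these are the very same paths in $B$ (Theorem \ref{O-maps} controls any residual discrepancy near the endpoints). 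Thus $\{[\tau(f^i(a))]\}$ and $\{[h^i(b)]\}$ are one sequence. By Proposition \ref{per2} this sequence weakly converges to $\Lambda_f^+(B)$, while by the analogue of Proposition \ref{per1} for the train track map $h$ on $B$ (whose proof transfers verbatim, $h$ being an expanding train track representative of the IWIP $\phi$) it weakly converges to $\Lambda_h^+(B)$.

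It remains to show that a sequence cannot weakly converge to two different stable laminations, and this is the step I expect to be the real obstacle. Here I would use quasiperiodicity. Let $P$ be a leaf segment of $\Lambda_f^+(B)$ of some length $L$; by quasiperiodicity of the leaves of $\Lambda_f^+(B)$ there is $L'$ so that a $G$-translate of $P$ occurs inside every leaf segment of $\Lambda_f^+(B)$ of length $L'$. For large $i$ the definition of weak convergence gives, for both laminations, that the proportion of points of $[h^i(b)]$ whose $L'$-neighbourhood is a leaf segment of $\Lambda_f^+(B)$ (respectively of $\Lambda_h^+(B)$) tends to $1$; hence these two full-proportion sets of good points meet, and at a common good point the $L'$-neighbourhood is simultaneously a leaf segment of both laminations. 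Such a neighbourhood, being a leaf segment of $\Lambda_f^+(B)$ of length at least $L'$, contains a translate of $P$, and being also a leaf segment of $\Lambda_h^+(B)$ it has all of its subsegments among the leaf segments of $\Lambda_h^+(B)$ by property (iii); hence $P$ is a leaf segment of $\Lambda_h^+(B)$. By $G$-invariance of the leaf-segment set and the symmetric argument, the two laminations have exactly the same leaf segments and therefore coincide. The delicate points to get right are the bookkeeping that the frequency of $P$-copies stays bounded below while the bad set shrinks, and the matching of neighbourhood lengths so that property (iii) applies cleanly.
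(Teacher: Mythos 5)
Your proposal is correct and follows essentially the same route as the paper: both compare the sequences $[\tau(f^i(a))]$ and $[h^i(\tau(a))]$ (your $[h^i(b)]$), use that $\tau f^i$ and $h^i\tau$ are $\mathcal{O}$-maps representing the same automorphism and hence agree up to bounded error near endpoints, invoke Propositions \ref{per1} and \ref{per2} for weak convergence to the two laminations, and finish by the quasiperiodicity argument showing that leaves with arbitrarily long common segments are equivalent. Your "uniqueness of weak limit" phrasing is just a slightly more explicit bookkeeping of the same final step the paper carries out.
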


\begin{proof}
Let $a$ be a periodic segment as in \ref{per1} and \ref{per2}. Then the sequences $[\tau(f^{i}(a))], [h^{i}(\tau(a))]$ weakly converge to $\Lambda_{h}^{+}(B)$ and to $\Lambda_{f}^{+}(B)$, respectively by the previous propositions. But $\tau f^{i}, h^{i} \tau$ are $\mathcal{O}$-maps from $A$ to $\phi(B)$, so their reduced images coincide in every path,  after deleting some bounded segments near endpoints. Then there are leaves $\ell, \ell '$ of $\Lambda_{h}^{+}(B)$ and $\Lambda_{f}^{+}(B)$ respectively with arbitrarily long common leaf segments. Since they are both quasiperiodic, it follows that they are equivalent. Indeed, let $P$ be a l.s. of $\ell$ of length $L$ then there exists $L'$ s.t. for every l.s. of length $L'$, $P'$ there is some $g \in G$ s.t. $P \subseteq gP'$. But we can find a common segment $Q$ of $\ell$ and $\ell '$ of length at least $L'$, so by quasiperiodicity $P \subseteq gQ \subseteq \ell$ and since $Q \subseteq \ell '$ we have that $P \subseteq gQ \subseteq g \ell ' $ and therefore $g^{-1}P \subseteq \ell '$.\\
We have proved that for every l.s. of $\ell$ there is an element of the group that map this l.s. to a l.s. of $\ell '$ and similarly we can prove the converse so $\ell$ and $\ell '$ are equivalent by definition. Therefore $\Lambda_{h}^{+}(B) = \Lambda_{f}^{+}(B)$
\end{proof}

So we have proved that we can use any train track representative to define the set of laminations, in particular we give the following definition:
\begin{defin}
The \textbf{stable lamination} $\Lambda_{\Phi}^{+}$ associated to some IWIP $\Phi \in Out(G, \mathcal{O})$ is the collection $\{ \Lambda_{f}^{+}(B) | B \in \mathcal{O} \}$ where $f : A \rightarrow A$ is a train track representative of $\Phi$. The \textbf{unstable lamination} $\Lambda_{\Phi}^{-}$ of $\Phi$ is the stable lamination of $\Phi ^{-1}$.
\end{defin}

\section{Action}
Let $\phi$ be an IWIP and $f: T \rightarrow T$ be an optimal train track representative of $\phi$.\\
We denote by $\mathcal{IL}$ the set of stable laminations $\Lambda_{\phi}^{+}$, as $\phi$ ranges over all IWIP automophisms relative to $\mathcal{O}$.
The group $Out(G, \mathcal{O})$ acts on $\mathcal{IL}$ via
\begin{equation}
\psi \Lambda_{\phi}^{+} = \Lambda_{\psi \phi \psi ^{-1}}^{+}
\end{equation}
More specifically, if $\ell$ is a leaf of $\Lambda_{\phi}^{+}$ in the $S$-coordinates and $h : S \rightarrow S$ an $\mathcal{O}$ map representing $\psi$, then $[h(\ell)]$ represents a leaf of $\Lambda_{\psi \phi \psi ^{-1}}^{+}$.\\
We are interested to study the stabiliser of the action for a fixed automorphism. Note that obviously the centraliser, which we denote by $C(\phi)$, of the IWIP $\phi$ in $Out(G, \mathcal{O})$ is a subgroup of $Stab(\Lambda)$.\\
We will equip $T$ with a specific train-track structure, the\textit{ minimal train-track structure}; more specifically we declare a turn legal, if it is crossed by some leaf of $\Lambda_{f}^{+}$. The properties of the lamination imply that a turn is legal iff there is a $f$-iterate of an edge of $T$ that crosses the turn.\\

\section{Subgroups carrying the lamination}
From now and for the rest of the sections, we fix a group  of finite Kurosh rank $G$ with some (non-trivial) free product decomposition, the relative outer space $\mathcal{O}$ which corresponds to this decomposition, some expanding IWIP $\phi$ relative to $\mathcal{O}$ and the associated lamination $\Lambda^{+}_{\phi} = \Lambda $.\\
This section is devoted to prove that it is not possible for a proper subtree to contain every leaf of the lamination.  Moreover, we will prove that every relative train track representative of some automorphism of the stabiliser, after passing to some power, induces the identity on the quotient restricted to any proper invariant subraph (which is union of strata).

\begin{defin}
Let $A$ be a subgroup of $G$ of finite Kurosh rank, and let's denote $T \in \mathcal{O}$ and $T_A$ the minimal invariant $A$- subtree. We suppose also that for every $v \in  V(T_A)$, $Stab_A (v) = Stab_G (v)$. Then we say that $A$ \textit{carries the lamination $\Lambda$}, if there exist some leaf $\ell$ of $\Lambda$ which is contained in $T_A$.
\end{defin}

\begin{remark}
\begin{enumerate}
\item Every two leaves of the lamination project to the same bi-infinite path in $\Gamma$.
\item For every vertex $v$ of $T$ there exist $g \in G$ s.t. $gv \in T_A$ (in particular, $T_A$ contains some orbit of any non-free vertex).
\end{enumerate}
\end{remark}

\begin{prop}\label{carries}
If a $A$ is a subgroup of $G$, as in the previous definition, which carries $\Lambda^{+}_{\phi}$ then $A$ has finite index in $G$.
\begin{proof}
Let $f : T \rightarrow T$ be a train-track representative of $\phi$ , $\Gamma = G/T$ and let $H \rightarrow \Gamma$ be an isometric immersion corresponding to $A \leq G$. Then by our assumptions $H$ is finite graph of groups and by the remarks contains every non-free vertex. Therefore (using also the assumption that the corresponding vertex groups are full), we can complete the immersion, by adding vertices and edges, to a connected finite-sheeted covering space $p : \Gamma ' \rightarrow \Gamma$ and therefore we have that $T' = T$ (where $T'$ is Bass-Serre tree of $\Gamma ' $).\\
Now we know that if $A$ has infinite index, then we are really adding new edges in $\Gamma '$ or equivalently we add new orbits of edges in $T$. But then using irreducibility we can reach a contradiction.\\
More specifically, we choose $e$ (edge of $T$) such that $f(e)$ starts with $e$. Then for every $n$ the path $f^{n}(e)$ is a path of $T_A$. So if we choose any edge $e _1$ (lift of some edge in $\Gamma ' - H$) there does not exist $n$ and $g \in G$ such that $f ^{n} (ge ′)$ passes through $e_1$ (since $e_1$ is in different orbit of edges in $T_A$), but this contradicts the fact that the transition matrix corresponding to $f$, denote it by $A(f)$, is irreducible. As a consequence, $A$ must have finite index in $G$.
\end{proof}

\end{prop}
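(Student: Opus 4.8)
The plan is to reformulate the statement in terms of coverings of graphs of groups and then to exploit the irreducibility of the transition matrix of a train track representative. First I would fix a train track representative $f : T \rightarrow T$ of $\phi$ and pass to the quotient graph of groups $\Gamma = G/T$. The subgroup $A$ together with its minimal subtree $T_A$ gives a finite graph of groups $H$ (the quotient of $T_A$ by $A$) and a natural immersion $H \rightarrow \Gamma$; by the preceding remark $T_A$ contains a representative of every non-free vertex, and by hypothesis the vertex groups are full, $Stab_A(v) = Stab_G(v)$. Hence this immersion can be completed, by adjoining only free vertices and edges, to a covering $p : \Gamma' \rightarrow \Gamma$ whose Bass--Serre tree is again $T$ and with $\pi_1(\Gamma') = A$. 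Under this dictionary $[G:A] < \infty$ is equivalent to the completion adding nothing, i.e. to the immersion $H \rightarrow \Gamma$ already being a covering, which in turn is equivalent to $T_A = T$ (that every edge of $T$ lies in $T_A$).

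Next I would argue by contradiction: assume $[G:A] = \infty$, so the completion genuinely adds an edge and there is an edge $e_1$ of $T$ with $e_1 \notin T_A$ (since $T_A$ is $A$-invariant, no $A$-translate of $e_1$ lies in $T_A$ either, so $e_1$ represents a new edge of $\Gamma'$). To produce the contradiction I would use the carried leaf. Choose an edge $e$ with $f(e)$ beginning with $e$; then $e \subseteq f(e) \subseteq f^2(e) \subseteq \cdots$ is the forward ray of a leaf, and after translating the carried leaf $\ell \subseteq T_A$ so that this ray is a sub-ray of $\ell$, the whole ray, and hence every $f^n(e)$, lies inside $T_A$. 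Consequently every edge-orbit of $\Gamma$ that occurs in some reduced path $f^n(e)$ is already represented in $T_A$.

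Finally I would invoke irreducibility. Since $\phi$ is an IWIP, the transition matrix $A(f)$ is irreducible, so every edge-orbit of $\Gamma$ occurs in $f^n(e)$ for some $n$; combined with the previous step this shows that every $G$-orbit of edges meets $T_A$. The genuinely delicate point, and the one I expect to be the main obstacle, is to upgrade this to the covering condition, i.e. to exclude the new edge $e_1$. Crossing a $G$-translate of $e_1$ is not yet a contradiction, since $T_A$ is only $A$-invariant; what is needed is local surjectivity of $H \rightarrow \Gamma$ at every vertex. Here I would use that the leaf $\ell$ is legal and quasiperiodic, so that it traverses, at honest vertices of $T_A$, every legal turn of the minimal train track structure, and then propagate each such germ through the full link of every vertex of $T_A$ using $Stab_A(v) = Stab_G(v)$. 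This forces the immersion to be a covering, so no extra edge $e_1$ can exist, and therefore $[G:A] < \infty$. The care required to pass from ``every $G$-orbit of edges meets $T_A$'', which irreducibility supplies at once, to ``every germ at every vertex of $T_A$ is present'', which is what actually rules out $e_1$, is the crux of the argument.
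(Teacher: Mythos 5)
Your first two paragraphs follow the paper's own strategy essentially verbatim: pass to the quotient graph of groups, use the fullness of the vertex stabilisers to complete the immersion $H \rightarrow \Gamma$ to a covering, and then use a fixed edge $e$ with $f(e)$ beginning with $e$ together with the irreducibility of the transition matrix to see that the iterates $f^{n}(e)$, which lie in $T_A$, cross a representative of every $G$-orbit of edges. Up to that point the two arguments coincide, and your conclusion there — that every $G$-orbit of edges of $T$ meets $T_A$ — is correct.

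The difficulty is the final step, which you correctly isolate as the crux but do not actually carry out. ``Every $G$-orbit of edges meets $T_A$'' is strictly weaker than ``$H \rightarrow \Gamma$ is a covering'' (equivalently $T_A = T$): already for $A = \langle a, bab^{-1}\rangle \leq F_2 = \langle a,b\rangle$ the minimal subtree meets both edge orbits while $A$ has infinite index. Your proposed repair is that legality and quasiperiodicity force $\ell$ to traverse, at each vertex $v$ of $T_A$ that it visits, every legal turn of the minimal train track structure at $v$; but this is not what quasiperiodicity gives, and for a single leaf it is in fact false: $\ell$ is a geodesic in the tree $T$, so it crosses each vertex on it exactly once and takes exactly one turn there. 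Quasiperiodicity only produces, inside every long leaf segment, some $G$-translate $gP$ of a prescribed short segment $P$, and that translate realises the relevant turn at the vertex $g v_0$, with no control on whether $g v_0$ is the particular vertex of $T_A$ under consideration (nor on whether $g \in A$, which is what would be needed to stay inside $T_A$). Likewise, propagating germs through the link via $Stab_A(v) = Stab_G(v)$ only reduces ``all germs at $v$ lie in $T_A$'' to ``one germ per $G_v$-orbit lies in $T_A$'', which is no reduction at all at free vertices. So the local surjectivity of $H \rightarrow \Gamma$, which is the entire content of the covering condition, is still unproved. I would add that the paper's own proof passes over the same point: it asserts that a lift $e_1$ of an edge of $\Gamma ' - H$ cannot be crossed by any $f^{n}(ge)$, but $f^{n}(ge) = \phi^{n}(g)f^{n}(e)$ is only a $G$-translate of a path in $T_A$, and since every $G$-orbit of edges does meet $T_A$ this does not exclude $e_1$; the distinction between $A$-orbits and $G$-orbits is elided there as well. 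Closing this gap requires a genuinely additional input (for instance an argument of Whitehead-graph or lifted-dynamics type), which neither your proposal nor the quoted proof supplies.
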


\begin{prop}\label{invariant subgraph}
Let $ \psi \in  Stab(\Lambda)$, and let $h : S  \rightarrow S$ be a relative train-track representative of $\psi$. Then we can find some $S' \in \mathcal{O}$ (which is topologically the same with $S$, but possibly with different marking) and a relative train track representative $h: S' \rightarrow S'$ of $\psi$ with the following property: let denote by $S_0$ some $h$-invariant $G$-subgraph of $S'$ (without free vertices of valence $1$) that is a union of strata. Then there is a $k$ s.t. if we restrict $h^k$ to $S_0$ induces the identity in $G / S_0$.
\end{prop}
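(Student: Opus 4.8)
The plan is to exploit three facts: that $\psi\in Stab(\Lambda)$ forces $h$ to carry the lamination to itself, that the leaves of $\Lambda$ fill $S'$ and are quasiperiodic, and that a \emph{proper} union of strata cannot contain long pieces of any leaf. The key preliminary move is to equip $S$ with the minimal train-track structure of Section~5 (a turn is legal exactly when it is crossed by a leaf of $\Lambda$). Since $\psi\in Stab(\Lambda)$, the action formula gives $[h(\ell)]\in\Lambda$ for every leaf $\ell$, so $h$ sends legal turns to legal turns; as the leaves are legal for this structure, one checks that $h$ is a train-track map for the minimal structure and hence maps leaves to reduced leaves \emph{with no cancellation}. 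Because every edge of $S$ is a leaf segment (the filling property of the IWIP lamination, transported from the train-track tree of $\phi$ by an $\mathcal{O}$-map), it then follows that for every edge $e$ and every $n$ the path $h^{n}(e)$ is already reduced and is a leaf segment of $\Lambda$. Securing this cancellation-free statement, and making it coexist with the relative train-track filtration, is the first point to nail down.

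Granting this, the heart of the argument is short. As $S_0$ is a \emph{proper} union of strata, I would choose an edge orbit $e^{*}\notin S_0$; by $G$-invariance no translate of $e^{*}$ lies in $S_0$. Since $e^{*}$ is a leaf segment, quasiperiodicity of the leaves supplies a constant $L'$ so that every leaf segment of length at least $L'$ contains a translate of $e^{*}$, and hence meets the complement of $S_0$. Therefore every leaf segment contained in $S_0$ has length less than $L'$ (this reproves, and quantifies, the fact that no leaf is carried by $S_0$, cf.\ Proposition~\ref{carries}). Now for any edge $e\subseteq S_0$ the path $h^{n}(e)$ is a leaf segment by the first paragraph and lies in $S_0$ by invariance, so $|h^{n}(e)|<L'$ for all $n$. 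Thus $h$ grows no edge of $S_0$: in particular $S_0$ has no exponentially growing stratum, and more strongly the restriction of $\psi$ to $S_0$ is of bounded growth.

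It remains to pass from bounded growth to the identity on the quotient. Since the reduced paths $h^{n}(e)$, for $e$ an edge of $S_0$, all have length less than $L'$, their projections to the finite graph $G/S_0$ range over a finite set, so the induced maps $\overline{h^{n}}$ are eventually periodic; equivalently the block $M_{S_0}$ of the transition matrix has bounded powers. I would then use the freedom to change the marking to $S'$ (this is exactly why the statement permits it) to realise the resulting finite-order part by an honest simplicial automorphism and to discard the pre-periodic transient, forcing $M_{S_0}^{k}=\mathrm{Id}$ and hence $[h^{k}(e)]=e$ for every edge of $S_0$, i.e.\ $h^{k}$ induces the identity on $G/S_0$; a common $k$ exists because there are only finitely many unions of strata. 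I expect this last step to be the main obstacle: it is where one must upgrade ``no growth'' to a literal simplicial identity, arrange a single $S'$ that works simultaneously for every proper invariant $S_0$, and verify that $h$ can be kept a relative train-track map throughout the re-marking.
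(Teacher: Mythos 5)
Your overall strategy --- bound the length of leaf segments lying in $S_0$ via quasiperiodicity and the filling property, deduce that $h$ does not grow edges of $S_0$, and then upgrade bounded growth to the identity on the quotient --- starts from the same observations as the paper, but two of its steps have real gaps. First, the claim that $h$ maps leaves to reduced leaves \emph{with no cancellation} (and hence that $h^{n}(e)$ is reduced and a leaf segment for every edge $e$ and every $n$) is unjustified: $\psi\in Stab(\Lambda)$ only tells you that the \emph{reduced} image $[h(\ell)]$ is again a leaf, and for a representative $h$ of an automorphism other than $\phi$ there is no train-track control of cancellation along leaves of $\Lambda_\phi^+$. Consequently the inequality $\mathrm{length}([h^{n}(e)])<L'$ does not follow: the portion of $[h^{n}(e)]$ that survives inside $[h^{n}(\ell)]$ is only controlled up to the bounded cancellation constant of $h^{n}$, which grows with $n$. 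The paper avoids this by arguing with the decomposition of the leaf itself into $S_0$-segments and complementary segments: quasiperiodicity bounds both kinds of segments, and it is these segments (not the edges) that are shown to be $h$-preperiodic.

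Second, and more seriously, the passage from ``the transition block of $S_0$ has bounded powers'' to ``$h^{k}$ induces the identity on $G/S_0$'' is exactly where the content lies, and ``re-marking to discard the pre-periodic transient'' does not supply it: preperiodicity of paths cannot in general be upgraded to periodicity by changing the marking, and bounded powers alone only give an eventual permutation, not the identity. The paper's mechanism is different. It assembles the segments of the leaf decomposition into a map $X\to S$, folds to an immersion $X'\to S$, notes that the leaf lifts to $X'$, and invokes Proposition \ref{carries} to conclude $X'=S$. This forces every simple periodic segment of $S_0$ (one representing a group element) to be a concatenation of $h$-preperiodic paths and hence --- because it corresponds to a genuine group element and so is invertible --- to be N-\emph{periodic}, with a uniform bound on the periods. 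It is this N-periodicity of all simple periodic segments, together with the relative train track structure, that yields that $h^{k}$ restricted to $S_0$ induces the identity on $G/S_0$. Your proposal is missing this covering-space step, and without it the conclusion does not follow.
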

\begin{proof}
	Let $\ell$  be a leaf in  $S$-coordinates and let $S_0$ be a proper $h$-invariant subgraph. The quasiperiodicity implies that there is an upper bound to the length of both $S_0$ and $S - S_0$ segments, and hence only finitely many segments occur (since there are finitely many lengths corresponding to edge- paths of bounded length in the quotient and quasiperiodicity implies that there are finitely many orbits of leaf segments of a specific length). Using the same argument we have that it is not possible for $\ell$ to contain arbitarily long segments of a proper subgraph since then the quasiperiodicity implies that $\ell$ is contained in that subgraph which contradicts to the previous proposition. Therefore $\ell$ is a concatenation of non-degenerate segments in $S_0$ and in $S - S_0$ (otherwise 
	would lift to a proper subgraph of $H$, which is impossible as we have already noticed). Now we have that all $S_0$-segments are
	$h$- preperiodic (there exist $M,N$ s.t. $h^{M}(L), h^{N}(L) $ are in the same orbit) or else $h$-iteration will produce arbitrarily long leaf segments contained in $S_0$ contradicting quasiperiodicity.\\
	We can start with the disjoint union $X$ of copies of the segments and the natural immersion $X \rightarrow S$ and we identify two endpoints of $X$ if they are mapped to the same point of $S$.
	Then fold to convert the resulting map to an immersion $ \pi : X' \rightarrow S$. But $\ell$ lifts to $X'$ (by construction) and so by previous proposition we have again that $X ' = S$ (it corresponds to a finite covering space of graph of groups). In particular, any simple periodic segment (the period of the axis of a simple loop or a loop corresponding to an element of some $G_{v_i}$ where $v_i$ has valence 1) in $S_0$ lifts to $X'$. Consequently, this segment is a concatenation of paths in $S_0$ each of which is $h$-preperiodic, and therefore this segment is N-periodic (since it corresponds to an element of the group and so we have inverse). Thus every such segment $a$ in $S_0$ is equivalent to some power $h^{k}(a)$ (note that there is a uniform bound for the powers) and hence for some $k$, $h^{k}$ restricted to $S_0$ induces the identity on the quotient, since $h$ is relative train track.

\end{proof}

\section{Stretching map}
In this section we will see that we can define a homomorphism from the stabiliser of the lamination to $\mathbb{R}$.

\begin{lemma}\label{lambda}
Suppose that $h: S \rightarrow S$ is an $\mathcal{O}$- map that represents $\psi \in Out(G, \mathcal{O})$. Then there exists a positive number $\lambda = \lambda(h, \Lambda)$ such that for every $\epsilon > 0$ there is $N > 0$ so that if $L$ is a leaf segment of $\Lambda$ of length $> N$, then $\big\vert \frac{\text{length}([h(L)])}{\text{length(L)}} - \lambda \big\vert < \epsilon$
\end{lemma}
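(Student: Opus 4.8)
The plan is to track the single quantity $\mu(L) := \mathrm{length}([h(L)])$ as $L$ ranges over leaf segments of $\Lambda$, and to show that $\mu(L)/\mathrm{length}(L)$ has a limit as $\mathrm{length}(L)\to\infty$ which is uniform over all leaf segments. The first step is to record that $\mu$ is \emph{almost additive} with respect to concatenation: if $L_1 L_2$ is a leaf segment, then $[h(L_1 L_2)]$ is obtained from $[h(L_1)][h(L_2)]$ by tightening, and the only cancellation occurs at the single junction point, which by the Bounded Cancellation Lemma of Subsection 2.4 is bounded by $BCC(h)$ on each side, so
\[
\mu(L_1)+\mu(L_2)-2\,BCC(h)\;\le\;\mu(L_1 L_2)\;\le\;\mu(L_1)+\mu(L_2).
\]
Positivity of the eventual constant $\lambda$ is immediate from the remark that every $\mathcal{O}$-map is a quasi-isometry, which bounds $\mu(L)$ below by $c\,\mathrm{length}(L)-c'$ (the Lipschitz constant bounds it above). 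Write $C=2\,BCC(h)$ for the additivity error.

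Next I would reduce the statement to a statement about block averages. Fix a large block length $n_0$ and cut a long leaf segment $W$ into $k\approx \mathrm{length}(W)/n_0$ consecutive blocks $B_1,\dots,B_k$, each essentially a leaf segment of length $n_0$. Iterating the two-sided estimate gives $\big|\mu(W)-\sum_i\mu(B_i)\big|\le (k-1)C$, hence
\[
\frac{\mu(W)}{\mathrm{length}(W)}\;=\;\frac{1}{k}\sum_{i=1}^{k}\frac{\mu(B_i)}{n_0}\;+\;O\!\left(\frac{C}{n_0}\right).
\]
Since the error $O(C/n_0)$ is independent of $W$ and tends to $0$ as $n_0\to\infty$, it suffices to show that the average block stretch $\frac1k\sum_i\mu(B_i)/n_0$ converges, as $\mathrm{length}(W)\to\infty$, to a value depending only on $n_0$ and $h$. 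Grouping the blocks by their $G$-orbit type $Q$ (there are only finitely many leaf-segment orbits of length $n_0$, as already used in the proof of Proposition \ref{invariant subgraph}), this average equals $\sum_Q \widehat f_W(Q)\,\mu(Q)/n_0$, where $\widehat f_W(Q)$ is the empirical frequency of the pattern $Q$ among the blocks of $W$.

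The crux is therefore to prove that these empirical frequencies converge, uniformly in $W$, to limiting frequencies $f(Q)$ independent of $W$ -- in other words, that $\Lambda$ is uniquely ergodic. I want to stress that this does \emph{not} follow from quasiperiodicity alone: uniform recurrence only yields that each $\widehat f_W(Q)$ is bounded away from $0$, not that it converges, and for a merely uniformly recurrent system the lemma can genuinely fail. What makes it true here is the self-similar structure of $\Lambda$ as the invariant lamination of the train-track map $f$ of the IWIP $\phi$: since every power of $\phi$ is irreducible, the transition matrix of $f$ on the (finitely many) orbits of edges is primitive, so by Perron--Frobenius the frequencies of leaf segments inside the iterates $f^m(e)$ converge to the values prescribed by the Perron--Frobenius eigenvector, independently of the starting edge, while every leaf is approximated by such iterates (properties (iv) and (v) of the lamination). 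This yields well-defined uniform frequencies $f(Q)$, and I expect establishing this unique-ergodicity step to be the main obstacle. Granting it, I set
\[
\lambda\;=\;\lim_{n_0\to\infty}\sum_{Q} f(Q)\,\frac{\mu(Q)}{n_0},
\]
the limit existing because the partial expressions form a Cauchy sequence by the almost-additivity estimate; combining this with the two previous displays then gives the desired uniform convergence $\mu(L)/\mathrm{length}(L)\to\lambda$.
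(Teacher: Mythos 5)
Your overall strategy --- almost-additivity from bounded cancellation, decomposition of a long leaf segment into blocks, convergence of block frequencies via Perron--Frobenius, and a double limit (first segment length, then block scale) --- is the same skeleton as the paper's proof, and the almost-additivity and positivity steps are fine. The difference lies in the choice of blocks, and it matters. You cut $W$ into arbitrary segments of length $n_0$ and group them by $G$-orbit type, which forces you to prove convergence of the empirical frequencies of \emph{all} length-$n_0$ patterns (a unique-ergodicity statement); you correctly observe that quasiperiodicity alone does not give this, and you leave it as the acknowledged main obstacle, with only a sketch that really addresses edge frequencies rather than pattern frequencies. The paper circumvents exactly this obstacle by choosing the blocks to be the segments $f^k(e)$: by property (v) of the lamination every leaf segment is an $f$-image of a leaf segment, so a long leaf segment is (up to negligible ends) $f^k$ of a shorter one and hence a concatenation of blocks $f^k(e)$ whose multiplicities $N_e$ are the edge-multiplicities of the preimage; the convergence $N_e/N\to r_e$ is then literally the Perron--Frobenius theorem for the primitive transition matrix $A(f)$, applied to edges only. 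With that choice the quantity you call $\sum_Q f(Q)\mu(Q)/n_0$ becomes the paper's $a_k=\bigl(\sum_e r_e\,l_e^h\bigr)/\bigl(\sum_e r_e\,l_e\bigr)$, and the rest of your argument (squeezing $\mu(L)/\mathrm{length}(L)$ between $a_k\pm\epsilon$ and sending $k\to\infty$) coincides with the paper's. So to complete your write-up you should either switch to the $f^k(e)$-blocks, or actually prove unique ergodicity for patterns (e.g.\ by an induced-substitution argument); as written, the crucial frequency step is not established. A minor additional point: ``finitely many leaf-segment orbits of length $n_0$'' is only clear if you cut at vertices, since blocks with arbitrary interior endpoints range over a continuum of orbit types; cutting along edge boundaries (blocks of roughly, not exactly, length $n_0$) fixes this at no cost.
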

\begin{proof}
We note that since $f$ is IWIP, we have that the transition matrix $M = A(f)$ is irreducible (as it is every power of $M$) and therefore we can apply the Perron - Frobenius theorem to $M$, as a consequence we have that long leaf segments of $\Lambda$ cross orbits of edges of $T$ with frequencies close to those determined by the components of the PF eigenvector.\\
Now fix large $k$ and then large l.s. are concatenation of l.s. of the form $f^{k}(e)$, for some edges of $T$, each orbit of edges with definite frequency.(For $k = 1$ this is the statement above, for $k>1$ apply $P.F$ theorem for $f^k$).\\
If $M$ is large enough, then for any l.s. $L$ with $length(L) > M$ we can think $L$ as concatenation of l.s. of the form $f^k (e)$ (there are possible some shorts segments contained in the first and the final segment, which are not of this form but we can ignore them since their contribution in lengths is neligible).\\
Now let $C$ be the bounded cancellation constant for $h : T \rightarrow T$, and let's denote $l_e = len(f^k (e))$, $l^{h} _{e} = len([h(f^k (e))])$, $N_e$ be the number of occurrences of orbits of $f^{k}(e)$ in $L$ and $N = \sum N_e$, then we have that  $\frac{N_e}{N} \rightarrow r_e$, as $len(L) \rightarrow \infty$ ($r_e$ is the PF component of the eigenvector that corresponds to $e$) by the PF theorem.\\
Note that the numbers $N_e, l_e, l_e ^{h}$ depends on $k$, so we define $a_k = \frac{\sum r_e l_e ^{h}}{\sum r_e l_e}$. We have that $len(L) = \sum N_e l_e$ and by bounded cancellation lemma:
\begin{equation}
\frac{\sum N_e (l_e ^{h} -2C)}{\sum N_e l_e} \leq A_M = \frac{ len([h(L)])}{len(L)}  \leq \frac{\sum N_e l_e ^{h}}{\sum N_e l_e}
\end{equation}
and subdividing the sums by $N$ we have that
\begin{equation}
\frac{\sum \frac{N_e}{N} l_e ^{h} - 2C \frac{N_e}{N} }{\sum \frac{N_e}{N} l_e} \leq A_M = \frac{ len([h(L)])}{len(L)}  \leq \frac{\sum \frac{N_e}{N} l_e ^{h}}{\sum \frac{N_e}{N} l_e}
\end{equation}
where the term $2C \frac{N_e}{\sum N_e l_e}$ converges to $0$ as $k \rightarrow \infty$ and as we noted above $\frac{N_e}{N} \rightarrow r_e$, as $len(L) \rightarrow \infty$. As a consequence, for every $\epsilon$ for large $k = k(\epsilon)$ and for large $M = M(\epsilon, k)$, $a_k - \epsilon \leq A_M \leq a_k + \epsilon$.\\
Firstly, we send $M \rightarrow \infty$ and then for every $\epsilon > 0$ for large $k$,
\begin{equation}
a_k - \epsilon \leq \liminf A_M \leq \limsup A_M \leq a_k + \epsilon
\end{equation}
Therefore sending $\epsilon$ to $0$, $k$ to infinity, we have that, choosing a subsequence of $a_k$ that converges to $a$,
\begin{equation}
a \leq \liminf A_M \leq \limsup A_M \leq a
\end{equation}
and therefore $\lim A_M = \liminf A_M = \limsup A_M = a$.\\
As consequence we have the requested property that there exists a positive number $\lambda$ s.t. $\frac{len([h(L)])}{len(L)} \rightarrow \lambda$, as $len(L)$ is going to infinity.
\end{proof}

\begin{lemma}
Using the notation as above and choosing any other representative $h'$ of $\psi$, we have that $\lambda (h, \Lambda) = \lambda (h', \Lambda)$. In particular, the number doesn't depend on the representative but only on $\psi$.
\end{lemma}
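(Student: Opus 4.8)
The plan is to reduce everything to the comparison theorem for $\mathcal{O}$-maps (Theorem \ref{O-maps}) together with the convergence of stretching ratios already established in Lemma \ref{lambda}. First I would pin down the setup: since both $h$ and $h'$ represent the same outer automorphism $\psi$, by definition each is an $\mathcal{O}$-map from $S$ to $\psi(S)$. Thus they share the same source tree and the same target tree; recall that $\psi(S)$ has the same underlying tree and the same metric as $S$, with only the $G$-action twisted by $\psi$, so lengths on both sides are measured in one fixed metric. This observation is exactly what licenses the application of Theorem \ref{O-maps}.

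Next I would apply Theorem \ref{O-maps} to the pair $h, h' : S \to \psi(S)$. This produces a constant $C = C(h,h',S)$, depending only on the two maps and on $S$ and \emph{not} on the chosen path, such that for every path $L$ in $S$ the tightened images $[h(L)]$ and $[h'(L)]$ agree except on subpaths near their two endpoints, each of length at most $C$. Taking $L$ to be (a representative of) a leaf segment of $\Lambda$, this gives the length comparison
\[
\bigl\vert \mathrm{length}([h(L)]) - \mathrm{length}([h'(L)]) \bigr\vert \leq 2C .
\]

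Finally I would divide by $\mathrm{length}(L)$ and let $\mathrm{length}(L)\to\infty$ along leaf segments of $\Lambda$; arbitrarily long leaf segments exist, for instance high $f$-iterates of an edge, which are leaf segments by the properties of $\Lambda$ and whose lengths tend to infinity since $f$ expands by $\lambda>1$. By Lemma \ref{lambda} the ratios $\mathrm{length}([h(L)])/\mathrm{length}(L)$ and $\mathrm{length}([h'(L)])/\mathrm{length}(L)$ converge to $\lambda(h,\Lambda)$ and $\lambda(h',\Lambda)$ respectively, while their difference is bounded by $2C/\mathrm{length}(L)\to 0$. Hence $\lambda(h,\Lambda)=\lambda(h',\Lambda)$, so the number depends only on $\psi$. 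I do not expect a genuine obstacle here: once the short estimate is in place the conclusion is immediate, and the only step that deserves care is the very first one, namely verifying that $h$ and $h'$ are indeed $\mathcal{O}$-maps between the \emph{same} pair of trees so that Theorem \ref{O-maps} applies with a single path-independent constant.
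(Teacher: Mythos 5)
Your proposal is correct and follows essentially the same route as the paper: both apply Theorem \ref{O-maps} to the pair $h,h'\colon S\to\psi(S)$ to get a path-independent bound on the discrepancy of $[h(L)]$ and $[h'(L)]$, then divide by $\mathrm{length}(L)$ and invoke the convergence from Lemma \ref{lambda}. Your extra care in checking that both maps have the same source and target tree is a worthwhile clarification but does not change the argument.
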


\begin{proof}
Let $h, h'$ be $\mathcal{O}$-maps which represent $\psi$ as in the previous lemma. Therefore by the proposition \ref{O-maps} for any $L$, $[h(L)] = [h'(L)]$, up to bounded error that doesn't depend on $L$. Therefore for every $L$, $len([h(L)]) \leq len([h'(L)]) + C$, where $C$ is positive fixed and as a consequence
\begin{equation*}
\big\vert \frac{len([h(L)] - len([h'(L)]))}{len(L)} \big\vert \leq  \frac{C}{len(L)} \rightarrow 0
\end{equation*}
for large $len(L)$.\\
Therefore since $\frac{len([h(L)])}{len(L)} \rightarrow \lambda(h, \Lambda)$ and $\frac{len([h'(L)])}{len(L)} \rightarrow \lambda(h', \Lambda)$, we have as a consequence $\lambda(h, \Lambda) = \lambda(h', \Lambda)$.
\end{proof}
\begin{lemma}
Using the notation above we have that $\sigma : Stab(\Lambda) \rightarrow \mathbb{R}^+$, where $\sigma(\psi)= \lambda(h, \Lambda)$, is a well defined homomorphism.
\end{lemma}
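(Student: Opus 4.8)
The plan is to verify the two defining properties separately: that $\sigma$ is well defined on $Stab(\Lambda)$, and that it is multiplicative. Well-definedness is essentially already in hand. By the immediately preceding lemma the number $\lambda(h,\Lambda)$ furnished by Lemma~\ref{lambda} does not depend on the chosen representative $h$ of $\psi$, so the assignment $\sigma(\psi):=\lambda(h,\Lambda)$ depends only on $\psi$; and it takes values in $\mathbb{R}^+$ because Lemma~\ref{lambda} produces a positive number. Thus $\sigma$ is a well-defined map $Stab(\Lambda)\to\mathbb{R}^+$, and it remains only to establish multiplicativity.

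For this I would fix $\psi_1,\psi_2\in Stab(\Lambda)$ and choose $\mathcal{O}$-maps $h_1,h_2:S\to S$ representing them for a common $S\in\mathcal{O}$. A direct check using the relation $h_i(gx)=\psi_i(g)h_i(x)$ shows that the composition $h_1h_2$ is again an $\mathcal{O}$-map and represents $\psi_1\psi_2$, which lies in $Stab(\Lambda)$. The crucial point, and the only place where membership in $Stab(\Lambda)$ is used, is that since $\psi_2\in Stab(\Lambda)$ the action formula of Section~5 says that $h_2$ sends each leaf of $\Lambda$ to a leaf of $\Lambda$ after tightening. Consequently, for a leaf segment $L$ of $\Lambda$, the reduced image $[h_2(L)]$ coincides with an honest leaf segment $L'$ of $\Lambda$ except for subpaths near its two endpoints whose lengths are bounded by a constant depending only on $h_2$ (this is bounded cancellation between $L$ and the remainder of the leaf carrying it, in the spirit of Theorem~\ref{O-maps}).

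With this I would compute the asymptotic stretch of $h_1h_2$ by factoring the ratio. Using $[h_1h_2(L)]=[h_1([h_2(L)])]$ we may write
\begin{equation*}
\frac{\mathrm{length}([h_1h_2(L)])}{\mathrm{length}(L)}=\frac{\mathrm{length}([h_1([h_2(L)])])}{\mathrm{length}([h_2(L)])}\cdot\frac{\mathrm{length}([h_2(L)])}{\mathrm{length}(L)}.
\end{equation*}
The second factor tends to $\sigma(\psi_2)$ by Lemma~\ref{lambda} applied to $h_2$. For the first factor, as $\mathrm{length}(L)\to\infty$ we have $\mathrm{length}([h_2(L)])\to\infty$ because $\sigma(\psi_2)>0$; replacing the long segment $[h_2(L)]$ by the genuine leaf segment $L'$ of $\Lambda$ changes both $\mathrm{length}([h_2(L)])$ and $\mathrm{length}([h_1([h_2(L)])])$ only by amounts bounded independently of $L$ (the latter controlled through $Lip(h_1)$ and bounded cancellation), and these additive constants become negligible after dividing by lengths tending to infinity. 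Hence the first factor tends to $\sigma(\psi_1)$ by Lemma~\ref{lambda} applied to $h_1$, and the product tends to $\sigma(\psi_1)\sigma(\psi_2)$. On the other hand $h_1h_2$ represents $\psi_1\psi_2$, so the very same ratio tends to $\sigma(\psi_1\psi_2)$ by Lemma~\ref{lambda}; uniqueness of the limit yields $\sigma(\psi_1\psi_2)=\sigma(\psi_1)\sigma(\psi_2)$. Since $\sigma(\mathrm{id})=1$ (the identity fixes every leaf segment), $\sigma$ is a homomorphism into the multiplicative group $\mathbb{R}^+$.

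I expect the main obstacle to be the careful bookkeeping of the bounded endpoint errors: one must confirm that passing from $[h_2(L)]$ to a genuine leaf segment $L'$ of $\Lambda$, and then pushing forward through $h_1$, introduces only additive constants independent of $L$, so that they vanish in the asymptotic ratio. This is precisely where the stabiliser hypothesis (to guarantee $[h_2(L)]$ is again a leaf segment up to bounded error), the quasiperiodicity of leaves, and the bounded cancellation estimate of Theorem~\ref{O-maps} must be combined.
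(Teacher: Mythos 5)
Your proposal is correct and follows essentially the same route as the paper: factor the ratio $\mathrm{length}([h_1h_2(L)])/\mathrm{length}(L)$ through $[h_2(L)]$, control the discrepancies by bounded cancellation so the additive errors vanish in the limit, and conclude by uniqueness of limits. You are in fact slightly more explicit than the paper about the one place the stabiliser hypothesis is genuinely needed, namely that $[h_2(L)]$ is a leaf segment of $\Lambda$ up to bounded endpoint error so that Lemma~\ref{lambda} applies to the first factor.
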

\begin{proof}

Since we have that $\psi \in Stab(\Lambda)$, this means that $[h(\ell)]$ is a leaf (for any leaf $\ell$) and as a consequence $\sigma$ is a well defined map.\\
We will prove that $\sigma$ is homomorphism.\\
So we have to prove that for any $\psi_1, \psi_2 \in Stab(\Lambda)$
it holds that $\sigma(\psi_1) \sigma(\psi_2) = \sigma(\psi_1 \psi_2)$.
We choose representatives $h_1, h_2$ of $\psi_1, \psi_2$ respectively,
and by definitions $\frac{len([h_1 (L)])}{len(L)} \rightarrow \sigma(\psi_1)$ and $\frac{len([h_2 (L)])}{len(L)} \rightarrow \sigma(\psi_2)$.
Moreover, $h_1 h_2$ represents $\psi_1 \psi_2$ (by previous lemma we can choose any representative).\\
Therefore since $\frac{len([h_1 (h_2 (L))])}{len(h_2 (L)}  \rightarrow \sigma(\psi _1 \psi _2)$, for $len(L) \rightarrow \infty$  and
$\frac{len([h_1 (h_2 (L))])}{len[h_2 (L)]}=\\
= \frac{len([h_1 [h_2 (L)]])}{len[h_2 (L)]}  \frac{len([h_2 (L)])}{len(L]}$
up to bounded error.
But now sending $len(L)$ to infinity, it holds $\frac{len([h_1 [h_2 (L)]])}{len[h_2 (L)]} \rightarrow \sigma(\psi _1)  $
(as $len[h_2 (L)]$ converges to infinity when $len(L) \rightarrow \infty$
and the fact that $[h_1 [h_2 (L)]])$ and $[h_1 (h_2 (L))]$ are in bounded distance and the bound doesn't depend on $L$).\\
Therefore by uniqueness of the limit, we have that $\sigma(\psi _1 \psi _2) = \sigma(\psi _1) \sigma(\psi _2)$.\\
\end{proof}

\section{Kernel of the homomorphism}
Now we investigate the properties of the kernel, We would like to prove that $ker(\sigma)$ contains as subgroup of finite index the intersection of the stabiliser with the kernel of the action. But firstly, we aim to prove that the subgroup $ker(\sigma)$ contains only  non- exponentially growing automorphisms.
We will prove it separately for irreducible and reducible automorphisms.

\subsection{Reducible case}
In the reducible case we will see that the automorphisms of the $Stab(\Lambda)$, have representatives of a very specific form. More specifically, every stratum except the top, is non-exponentially growing and moreover the representative restricted to each stratum is just a permutation of edges. Therefore we can calculate the value of $\sigma$, using only the top stratum if it is exponentially growing.

\begin{prop}\label{Redu}
If $\psi \in Stab(\Lambda)$ is exponentially growing and there exists some $k$ s.t. $\psi^{k}$ reducible, then $\psi \notin Ker(\sigma)$
\end{prop}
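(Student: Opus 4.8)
The plan is to exploit the homomorphism property of $\sigma$ to reduce to the reducible case, and then to read off the value of $\sigma(\psi)$ from the unique exponentially growing stratum of a good relative train track representative. Since $\sigma$ is a homomorphism into $\mathbb{R}^+$ and $\sigma(\psi)^k = \sigma(\psi^k)$, it suffices to show $\sigma(\psi^k) \neq 1$ for the power $k$ with $\psi^k$ reducible, because the only positive $k$-th root of $1$ is $1$. Moreover $\psi^k$ is again exponentially growing (exponential growth is preserved under powers), so after replacing $\psi$ by $\psi^k$ I may assume that $\psi$ itself is reducible and exponentially growing.

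First I would fix a relative train track representative $h : S \rightarrow S$ of $\psi$ with filtration $S_0 \subseteq \cdots \subseteq S_m = S$, and apply Proposition \ref{invariant subgraph} to the maximal proper invariant subgraph $S_{r-1}$; after passing to a further power (harmless by the previous paragraph) this guarantees that $h$ restricted to every proper invariant union of strata induces the identity on the quotient. Consequently each such lower stratum is non-exponentially growing and $h$ acts on it as a length-preserving permutation of edges, so the exponential stratum cannot lie in any proper $S_{r-1}$: all the exponential growth is concentrated in the top stratum $H_r$, which then carries a Perron--Frobenius eigenvalue $\lambda_r > 1$.

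Next I would show that the leaves of $\Lambda = \Lambda_\phi^+$ cross $H_r$ with positive frequency. By Proposition \ref{carries} no proper invariant subgraph can carry $\Lambda$, and the argument in the proof of Proposition \ref{invariant subgraph} shows that a leaf $\ell$ is a genuine concatenation of non-degenerate $S_{r-1}$-segments and $H_r$-segments; since $\phi$ is IWIP its transition matrix is irreducible with strictly positive Perron--Frobenius eigenvector, so every orbit of edges of $S$---in particular every edge of $H_r$---is crossed by $\ell$ with positive frequency.

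Finally I would estimate $\sigma(\psi) = \lambda(h, \Lambda)$ directly from Lemma \ref{lambda}. Writing a long leaf segment $L$ as an alternating concatenation of $H_r$-segments $\beta_i$ and lower segments $\alpha_j$, the relative train track property makes the $\beta_i$ legal in $H_r$, so $[h(\beta_i)]$ carries no internal cancellation and its $H_r$-length is multiplied by $\lambda_r$, while the $\alpha_j$ keep their length under the permutation $h$. The junction cancellation is bounded by $BCC(h)$, and by iterating (using $\sigma(\psi^n) = \sigma(\psi)^n$ together with the critical constant $C_{crit}$, so that the surviving top-stratum pieces become arbitrarily long and the bounded junction losses become negligible against the factor $\lambda_r^n$) the contribution of the top stratum dominates. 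This forces the weighted stretch $\lambda(h, \Lambda)$ to exceed $1$, and hence $\sigma(\psi) > 1$, so $\psi \notin Ker(\sigma)$. I expect the only delicate point to be this last cancellation bookkeeping: one must ensure that the positive-frequency, $\lambda_r$-expanded top-stratum portions are not destroyed by the bounded cancellation accumulating at the many junctions, which is exactly why the relative train track legality of the $H_r$-segments and the iteration-plus-critical-constant argument are needed.
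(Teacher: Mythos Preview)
Your proposal is correct and follows essentially the same route as the paper: pass to a power, use Proposition~\ref{invariant subgraph} to make $h$ induce the identity on the quotient below the top stratum (so all lower strata are non-exponential and only the top stratum carries the exponential growth), and then read off $\sigma(\psi)>1$ from the subgraph--overgraph decomposition of a leaf. The only cosmetic difference is that the paper first rules out exponential lower strata by a direct argument (iterating an edge there would produce arbitrarily long leaf segments inside a proper invariant subgraph, contradicting quasiperiodicity via Proposition~\ref{carries}) and then simply asserts that $\sigma(\psi)$ equals the top Perron--Frobenius eigenvalue, whereas you deduce the first point from Proposition~\ref{invariant subgraph} and sketch the cancellation bookkeeping for the second more explicitly.
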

\begin{proof}
Let $h: S \rightarrow S$ be a relative train track representative of $\psi$(we can change $h$ with some power if it is necessary).\\ 
Firstly, we note that every stratum, except possibly the top one, is non-exponentially growing. This is true, since otherwise if some $H_r$ is exponentially growing and $e \in H_r$ we have that the lengths of tightenings of  $h$- iterates of $e$ are arbitarily long (by the train track properties) and they are l.s. (by definition of the stabiliser of the lamination), but this means that we have arbitarily long segments contained in some proper subgraph (since $h(G_r) \subseteq G_r$), which is impossible as we have seen in \ref{carries}.\\
Therefore if $\psi$ is exponentially growing then we suppose, changing $h$ with some iterate if it is necessary, that there exists $H_0$ which is union of strata, all of them are non-exponentially growing, $h$ restricted to $H_0$ induces the identity in the quotient, and that the top stratum is exponentially growing, so if we have a leaf of the lamination and using the subgraph-overgraph decomposition of the leaf, it is implied that the lengths of long l.s. grow exponentially and in fact the actual value is the Perron-Frobenius eigenvalue that corresponds to the unique exponentially growing stratum.
\end{proof}

\subsection{Irreducible case}
Now let's suppose that $\psi$ is an IWIP.
We have two cases and we will prove the theorem independently for automorphisms that have the $NGC$ and the rest automorphisms that have the $GC$ (the dichotomy is the same as in the free case, but  for the automorphisms with $GC$  we need arguments of different nature). We will prove again that the value of $\sigma$ corresponds to the Perron - Frobenious eigenvalue of $\psi$ (or $\psi ^{-1}$).
\begin{lemma}
Let $h : S \rightarrow S$ be a train track map representing some irreducible $\psi \in Out(G, \mathcal{O})$. \par
Then for every $C >0$ there is a number $M > 0$ such that if $L$ is any path, then one of the following holds:
\begin{enumerate}
  \item $[h^{M}(L)]$ contains a legal segment of length $ > C$
  \item $[h^{M}(L)]$ has fewer illegal turns that $L$
  \item $L$ is concatenation $x \cdot y \cdot z$, such that $y$ is N-preperiodic and $x$, $z$ have length $\leq 2C$ and at most one illegal turn.
\end{enumerate}
\end{lemma}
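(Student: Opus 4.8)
The plan is to track two quantities as we iterate $h$: the length of the longest legal segment of $[h^n(L)]$ and the number of illegal turns. Write $L = \mu_0 \mu_1 \cdots \mu_k$ as the concatenation of its maximal legal subsegments, so $k$ is the number of illegal turns. Let $\lambda > 1$ be the expansion factor, let $BCC = BCC(h)$, and recall the critical constant $C_{crit} = 2BCC/(\lambda-1)$ together with the fact established above that a legal segment of length exceeding $C_{crit}$ is stretched by a definite factor at each iterate and so grows without bound. First I would dispose of case (1): if some interior $\mu_i$ has $\mathrm{length}(\mu_i) > C_{crit}$, then the legal segment of $[h^n(L)]$ descending from $\mu_i$ has length at least $m\lambda^n\,\mathrm{length}(\mu_i)$ for a uniform $m>0$, while an end segment $\mu_0$ or $\mu_k$ grows already under $\mathrm{length} > C_{crit}/2$, since it cancels on only one side. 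In either case there is an $M_1$ depending only on $C$, $\lambda$, $m$, $C_{crit}$ (not on $L$) with $[h^{M_1}(L)]$ containing a legal segment of length $>C$; fixing the $M$ of the statement to be a large multiple of $M_1$, all such $L$ land in case (1), and I may assume every legal segment of $L$ and of each $[h^n(L)]$ is short.

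Next I would treat case (2). Since $h$ is a train track map it carries legal paths to legal paths, so the only cancellation in tightening occurs at illegal turns and the number of illegal turns is non-increasing under iteration. Because there are finitely many orbits of turns, there is a uniform $k_0$ with $Dh^{k_0}$ degenerate on every illegal turn, so any persisting illegal turn forces a bounded amount of cancellation every $k_0$ steps. Choosing $M$ a sufficiently large multiple of $k_0$, if the illegal-turn count of $[h^M(L)]$ is strictly smaller than that of $L$ we are in case (2); it remains to analyse the \emph{trapped} paths, whose legal segments stay short and whose illegal-turn count is unchanged through the first $M$ iterates.

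Finally I would extract case (3) from the trapped paths. Strip from each end of $L$ a piece $x$, respectively $z$, consisting of at most one illegal turn (hence at most two short legal segments, of total length $\leq 2C$), chosen so that the endpoint behaviour is absorbed; let $y$ be the remaining middle path. In the trapped regime no legal segment of $[h^n(y)]$ grows and the illegal-turn count is constant, so the legal blocks between consecutive illegal turns have uniformly bounded length. Since $G/S$ is finite there are finitely many orbits of such bounded blocks, and a pigeonhole argument on the sequence $\{[h^n(y)]\}$ shows it eventually repeats up to the $G$-action, so $y$ is N-preperiodic, giving case (3).

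The hard part will be this last step, where two difficulties must be reconciled. First, the total length of $y$ is not bounded (the number $k$ of illegal turns is arbitrary), so one cannot directly invoke finiteness of bounded-length paths; the finiteness must instead be applied block-by-block and promoted to periodicity of the whole iterate sequence, which requires the interior cancellation at each illegal turn to be \emph{deterministic} once its flanking legal segments are short. Second, matching the precise bounds of case (3) demands a careful endpoint analysis: an end legal segment of length between $C$ and $C_{crit}$ neither immediately triggers case (1) nor fits into $x$ or $z$, and one must iterate far enough that every such end segment either grows past $C$, is absorbed while lowering the illegal-turn count, or shrinks below $2C$ — the one-sided growth threshold $C_{crit}/2$ being exactly what makes this trichotomy close.
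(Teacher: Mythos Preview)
Your three-case structure is right and matches the paper, but the paper's execution of case (3) is shorter and avoids the difficulty you flag at the end. The paper simply takes $M$ to exceed the number of inequivalent legal edge-paths of length $\leq 2C$, assumes (1) and (2) fail, and observes that each interior maximal legal block $\sigma_i$ then survives through all steps $0,\dots,M$ with bounded length (this is where your critical-constant observation is tacitly used, so your step~1 is not wasted, only folded in). Pigeonhole on each block separately yields $a_i<b_i$ with $\sigma_i^{(b_i)}=g_i\,\sigma_i^{(a_i)}$ for some $g_i\in G$. The key move you are missing is to extract from this a preperiodic \emph{point}: since $\sigma_i^{(b_i)}\subset h^{b_i-a_i}(\sigma_i^{(a_i)})$, the assignment $p\mapsto (h^{b_i-a_i})^{-1}(g_i p)$ is a contraction of $\sigma_i^{(a_i)}$ into itself, and its fixed point $p_i^*$ satisfies $h^{b_i-a_i}(p_i^*)=g_ip_i^*$. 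Pulling these points back into $L$ and cutting there gives $L=x\cdot y_1\cdots y_m\cdot z$; each $y_j$ spans a single illegal turn between two preperiodic points, so $[h^n(y_j)]$ is the geodesic between two eventually periodic points and is therefore itself eventually periodic up to $G$. Concatenating, $y=y_1\cdots y_m$ is N-preperiodic, and $x,z$ are the stubs out to the first and last preperiodic points, each crossing at most one illegal turn.

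Your proposed route --- block-by-block pigeonhole, then ``deterministic evolution'' to promote to global periodicity --- has a real gap. A single repeat $\sigma_i^{(a_i)}\sim\sigma_i^{(b_i)}$ does not make the sequence $\{\sigma_i^{(n)}\}_n$ periodic, because $\sigma_i^{(n+1)}$ depends on the neighbouring blocks $\sigma_{i\pm 1}^{(n)}$ through the cancellation, and those repeat at unrelated times $a_{i\pm 1},b_{i\pm 1}$. Synchronising would force you to pigeonhole on the full tuple of blocks, whose number of possible values is exponential in the illegal-turn count $k$ and yields no uniform $M$. The preperiodic-point device is precisely what decouples the pieces: once you cut at points rather than at illegal turns, each $[h^n(y_j)]$ is determined by its two endpoints alone, independent of the rest of $L$. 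This also renders the $k_0$ in your step~2 and the elaborate one-sided endpoint analysis unnecessary.
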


\begin{proof}
Choose $M$ to be a natural number that exceeds the number of inequivalent legal edge paths  of length $ \leq 2C $.\\
Now assume that $L$ is a path such that the second statement fails, so $[h^{M}(L)]$ has the same number of illegal turns with $L$ (since $h$ is train track map, sends edges to legal paths and legal turns to legal turns so it is not possible the image of a path to have more illegal turns than the path). So each $h$- iteration of $L$ amounts to iterating maximal legal subsegments of $L$ and cancelling portions of adjacent ones.\\
If, in addition, the first fail as well, then each maximal legal segment (which has length $\leq C$) of $L$, except possibly the ones that contain the endpoints must have two iterates that after cancellation yield equivalent segments (otherwise we will have $M$ equivalent legal segments of length $\leq C $, but this contradicts to the choice of $M$).\\
Therefore, we have that each segment contains a preperiodic point so that these points subdivide $L$ as $x \cdot  y_1 \cdot ... \cdot y_m \cdot z$, and we have that this path satisfies the third statement.
\end{proof}

Firstly we will prove a useful lemma for IWIP automorphisms which satisfy the property NGC and then we see that how we can use it for GC automorphisms.

\begin{lemma}\label{legal}
Let $\psi$, $\psi^{-1}$ irreducible automorphisms (IWIP'S), $h : S \rightarrow S$ train track map representing $\psi$, $h' : S' \rightarrow S'$
representing $ \psi^{-1}$ and let's suppose that  there is an integer $m$ so that it is impossible to concatenate more than $m$ N- periodic in $S$ and in $S'$. Let $\tau : S \rightarrow S$, $\tau ' : S' \rightarrow S'$, $\mathcal{O}$-maps.\\
Then for any $C > 0$ there are constants $N_0 > 0$ and $L_0$ such that if $j$ is line or a path of length $ \geq L_0 $ and if $j'$ the isometric immersion obtained from $[\tau j]$, then one of the following holds:
\begin{enumerate}[(A)]
  \item $[h^M(j))]$ contains a legal segment of length $ > C$
  \item $[h'^M(j')] $ contains a legal segment of length $ > C$
\end{enumerate}
\end{lemma}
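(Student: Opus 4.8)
The plan is to reduce everything to the trichotomy of the previous lemma, applied to the single train track map $h$, and to convert the failure of (A) into a proof of (B). Fix $C$. Since a legal segment of length exceeding a larger number is in particular a legal segment of length exceeding $C$, I may enlarge $C$ and assume $C \gr C_{crit}$, the critical constant of $h$. Let $M$ be the constant produced by the previous lemma for this $C$, and take $N_0$ to be a large multiple of $M$, pinned down below. Throughout I would use the change-of-coordinate maps $\tau\colon S\to S'$ and $\tau'\colon S'\to S$, which are $\mathcal{O}$-maps and hence represent the identity; in particular $j'=[\tau j]$, and since every $\mathcal{O}$-map is a quasi-isometry the length of $j'$ is comparable to that of $j$.

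First I would analyse the forward iterates of $j$ under $h$ assuming (A) fails, i.e.\ that $[h^{N_0}(j)]$ carries no legal segment longer than $C$. Legal segments are preserved and stretched by $\lambda$ under a train track map, so if any maximal legal segment of some intermediate iterate $[h^{s}(j)]$ exceeded $C_{crit}$, the critical-constant property would make it grow past $C$ within a bounded number of further iterations, producing (A); hence failure of (A) forces every maximal legal segment arising along the way to stay subcritical. Subcritical legal segments are eaten by bounded cancellation at a definite rate, so after boundedly many steps the illegal turns resolve and, by the previous lemma, $[h^{N_0}(j)]$ is a concatenation of N-preperiodic pieces up to short end-segments. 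The $NGC$ hypothesis then caps the number of such pieces and, after one more iteration passing to their N-periodic representatives (a concatenation of at most $m$ indivisible N-periodic paths, of which there are finitely many orbits), yields a uniform bound $\mathrm{length}([h^{N_0}(j)]) \ls B$ with $B$ independent of $j$.

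Next I would transfer this forward collapse into backward expansion. The compositions ${h'}^{N_0}\tau h^{N_0}\colon S\to S'$ and $\tau\colon S\to S'$ both represent the identity, so by Theorem \ref{O-maps} they agree on $j$ up to end-pieces of bounded length $E$; that is, $j'=[\tau j]$ coincides, away from its ends, with $[{h'}^{N_0}(p)]$, where $p=[\tau\,[h^{N_0}(j)]]$. By the previous step $p$ has length at most $Lip(\tau)\,B$, hence a bounded number of illegal turns, and this number cannot increase under $h'$. On the other hand $[{h'}^{N_0}(p)]$ has length comparable to that of $j'$, which is large once $\mathrm{length}(j)\gr L_0$. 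A path with boundedly many illegal turns but large length must contain a legal segment longer than $C$; choosing $L_0$ large enough, a subsegment of it of length exceeding $C$ lies in the portion of $[{h'}^{N_0}(p)]$ that coincides with $j'$, so $j'$ itself contains a legal segment of length $>C$. Since $h'$ stretches legal segments, $[{h'}^{N_0}(j')]$ then contains one as well, which is exactly (B).

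The hard part will be the uniform forward bound of the second paragraph: the number of iterations needed to eat the subcritical legal segments and expose the N-periodic core must be controlled independently of $\mathrm{length}(j)$, and I expect this to rest squarely on the interplay between the critical constant (anything not destined to be destroyed grows past $C$, giving (A)) and the $NGC$ hypothesis (which bounds the length of whatever survives). Some care is also needed so that the short end-segments $x,z$ of the trichotomy do not themselves spawn a long legal segment, for then (A) would hold outright; and running the same analysis with the roles of $h$ and $h'$ (and of $\tau,\tau'$) exchanged is what accounts for the symmetric $NGC$ assumption on $S'$.
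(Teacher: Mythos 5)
Your overall strategy --- run the trichotomy for $h$, use the NGC hypothesis to rule out the N-preperiodic case, and transfer the resulting collapse through $\tau$ and the near-identity ${h'}^{N_0}\tau h^{N_0}\approx\tau$ to force legality on the $S'$ side --- is the same as the paper's. But the pivot of your argument, the ``uniform forward bound'' $\mathrm{length}([h^{N_0}(j)])\leq B$ with $B$ independent of $j$, is not merely the hard part: it is false, and your third paragraph collapses without it. The lemma allows $j$ to be a bi-infinite line, for which the claim is meaningless, and even for finite paths a fixed number $N_0$ of iterations cannot reduce an arbitrarily long $j$ (with arbitrarily many illegal turns) to bounded length, since case (ii) of the trichotomy removes illegal turns only one at a time per application of $h^{M}$ to a given segment. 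What is actually true, and what the paper proves, is a \emph{multiplicative} contraction: decompose $j$ into subsegments of uniformly bounded length each having more than $m+2$ illegal turns (so that, with (A) failing and NGC excluding case (iii), case (ii) must apply to each); since each such segment has at most $p$ illegal turns and loses at least one per $M$ iterations, the illegal-turn count --- and hence, because all legal segments stay shorter than $C$, the length --- satisfies $\mathrm{length}[h^{sM}(L)]<Aa^{s}\,\mathrm{length}(L)$ for some fixed $a<1$. This is proportional to $\mathrm{length}(j)$, not uniformly bounded, so your step ``$p$ has bounded length, hence boundedly many illegal turns, hence $[{h'}^{N_0}(p)]$ of large length must contain a long legal segment'' does not go through: $p$ may still have a huge number of illegal turns, densely distributed.

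The paper closes the argument differently: it assumes (B) fails as well, applies the same contraction estimate to $h'$ starting from $[\tau h^{sM}(j)]$, multiplies the two inequalities to get $\frac{\mathrm{length}[{h'}^{sM}\tau h^{sM}(L)]}{\mathrm{length}(L)}<2ABa^{2s}$, and contrasts this with the lower bound $\geq\frac{1}{2Lip(\tau)Lip(\tau')}$ coming from the facts that ${h'}^{sM}\tau h^{sM}$ and $\tau$ are $\mathcal{O}$-maps agreeing up to bounded error and that $\tau'\tau$ and the identity are both $\mathcal{O}$-maps; letting $s\to\infty$ gives the contradiction since $a<1$. If you wish to keep your direct formulation (failure of (A) implies (B)), you would have to replace the uniform bound by this ratio estimate and argue that the \emph{average} maximal legal segment of $[{h'}^{sM}\tau h^{sM}(j)]\approx j'$ has length at least $\mathrm{length}(j')$ divided by the number of illegal turns, which grows like $a^{-2s}$ --- but that is exactly the paper's computation read contrapositively, not a shortcut around it.
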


\begin{proof}
Without loss, we may assume that $C$ is larger than the critical constants for $h$ and for $h'$. Let $M$ be the larger of the two integers guaranteed by previous lemma applied to $h,C$ and $h',C$. We will fix a large integer $s = s(h, h', \tau, \tau ', M)$. Suppose that (A) does not hold with $N_0 = sM$. We will apply the previous lemma only to $h^{M}$-admissible segments (a segment $L \subseteq j$ so that $h^{M} (\partial L) \subseteq [h^{M}(j)]$). By our assumption the first of the previous lemma doesn't hold. If we further restrict to segments $L$ with $> m + 2$ illegal turns, then we can't have the third case either. So for such segments the second is always true. We can represent $j$ as a concatenation of such segments of uniformly bounded length and the uniform bound does not depend on $j$, but only on $h, h', \tau, \tau ' , M$ (since we will apply the same argument using $[\tau h (j)], h'$ instead of $j, h$ respectively).\\
Say $p$ is an upper bound to the number of illegal turns in each segment (there are finitely since they are of uniformly bounded length). Fix $a$ with $\frac{p-1}{p} < a < 1$. For long enough segments $L$ in $j$ the ratio
$
\frac{\text{number of illegal turns in }[h^{M}(L)]}{\text{number of illegal turns in } L} < a$
(since the number of illegal turns in $L$ than $p$ and number of illegal turns in $[h^{M}(L)]$ is strictly less that the number of illegal turns in $L$).\\
 By applying the same argument to $h^{M} (j)$ and then to $h^{2M} (j)$ etc, we see that for given $s > 0$ and long enough segments $L \subseteq j$ (the length depends on $s$ as well ), we have $
\frac{\text{number of illegal turns in }[h^{sM}(L)]}{\text{number of illegal turns in } L} < a^{s}$,\\
or else (A) holds with $N_0 = sM$. Since legal segments have length above by $C$ and below by the length of the shortest edge(with the exception of the two containing the endpoints), the length can be compared with two inequalities to the number of illegal turns. Therefore if (A) fails,
there exists a constant $A = A(h , C)$ with the property $\frac{\text{length}[h^{sM}(L)]}{\text{length} (L)} < A a^{s}$.
Similarly, we can use the same argument using $[\tau h^{sM} j]$ in place of $j$ and with $h'$ in place of $h$. If (B) fails as well,(with $N_0 = sM$) we reach a similar conclusion that
$ \frac{ \text{length} [h'^{sM} \tau h^{sM} (L)]}{ \text{length} [\tau h^{sM} (L)} $
$< B a^{s}$ for some $B$ depends only on $h', C$.\\
Firstly, we note that $h'^{sM} \tau h^{sM}, \tau$ are both $\mathcal{O}$-maps so they coincide to every path, except some bounded error near endpoints, in particular for long $L$, we have that the ratio of their lengths is bounded above by $2$ and below by $1/2$. Therefore multiplying the above inequalities and
changing $h'^{sM} \tau h^{sM}$ by $\tau$ we have the inequality :
\begin{equation}
\frac{ \text{length} [\tau (L)]}{ \text{length} [\tau h^{sM} (L)]} \frac{\text{length}[h^{sM}(L)]}{\text{length} (L)}  <2 A B a^{2s}.
\end{equation}
 On the other hand, $\frac{\text{length}[h ^{sM}(L)]}{\text{length} (\tau h^{sM} L)}  \frac{\text{length}[\tau(L)]}{\text{length} (L)} > \frac{1}{2Lip(\tau)Lip(\tau ')}$ using again that $\tau ' \tau $ and the identity are both $\mathcal{O}$- maps as above.\\
But sending $s$ to infinity we have a contradiction, since $a <1$.
\end{proof}

\textbf{Geometric Case}:
In the proof of the previous lemma we have used the property that there is an integer $m$ so that it is impossible to concatenate more than $m$ N- periodic paths in $j$ (and the iterates  $ [h ^M (j)] $) and the same is true for $ j' $ (and the iterates  $ [ h'^{M} ( j' ) ]) $. The previous lemma is true for NGC automorphisms for every $j$. But if we apply this when $j$ is some leaf of the lamination and $h \in Stab(\Lambda)$, we can prove that this always the case.

\begin{lemma}
If $\ell$ is some leaf of the lamination, then there is an integer $m$ so that it is not possible for $\ell$ to contain a concatenation of m subpaths that each of them is N-periodic.
\end{lemma}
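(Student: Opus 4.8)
The plan is to argue by contradiction, exploiting the quasiperiodicity of the leaves of $\Lambda = \Lambda_{\phi}^{+}$ together with the fact that an N-periodic path cannot grow under iteration. Fix the train track representative $h : S \rightarrow S$ of $\psi$, and recall that the indivisible N-periodic paths of $h$ fall into finitely many $G$-orbits, so their lengths lie in a fixed interval $[\delta, D]$ with $\delta > 0$; bounding the number of concatenable indivisible N-periodic paths suffices. Suppose, for contradiction, that for every $m$ the leaf $\ell$ contains a concatenation $P_m = p_1 \cdots p_m$ of indivisible N-periodic paths.

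First I would use quasiperiodicity to promote this to a global statement. Either some leaf segment $Q$ of $\ell$ fails to lie inside any N-periodic concatenation, or $\ell$ is entirely tiled by N-periodic paths. In the first case quasiperiodicity yields an $L'$ so that a translate of $Q$ occurs inside every leaf segment of length $L'$; since such a $Q$ interrupts any N-periodic concatenation, every concatenation occurring in $\ell$ has length at most $L'$, contradicting $\mathrm{length}(P_m) \geq m\delta \to \infty$. Hence we are reduced to the case where $\ell$ is a bi-infinite Nielsen line, a concatenation $\cdots p_{-1} p_0 p_1 \cdots$ of indivisible N-periodic paths.

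In this remaining case every leaf segment $P$ of $\ell$ is a subsegment of an N-periodic concatenation, so (taking a common period $k$) the tightened image satisfies $\mathrm{length}([h^{k}(P)]) \leq \mathrm{length}(P)$, because each $[h^{k}(p_i)]$ is equivalent to $p_i$ and tightening only shortens; the same holds for $h^{-k}$ since N-periodicity with respect to $h$ and $h^{-1}$ coincide. Thus no leaf segment of $\ell$ grows under $h^{\pm k}$. I would then conclude that $\ell$ is periodic, i.e.\ the axis of a hyperbolic element $g$: the finitely many orbit-types of the tiles, the $h^{k}$-invariance (up to equivalence) of the tiling, and quasiperiodicity together should pin the bi-infinite sequence of tile-types down to a periodic one, exactly as a bi-infinite Nielsen line for a geometric automorphism must run along a single peripheral axis.

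Finally I would derive a contradiction from $\ell$ being an axis. Passing back to $A$-coordinates and the train track map $f$ of $\phi$, the line $[f(\ell)]$ is again a leaf equivalent to $\ell$, and since leaves are legal, $f$ expands lengths along them by the factor $\lambda > 1$. If $\ell = \mathrm{axis}(g)$ then $[f(\ell)] = \mathrm{axis}(\phi(g))$, and equivalence of the two (single-axis) leaves forces $\phi(g)$ to be conjugate to $g^{\pm 1}$; but then the translation lengths of $\phi(g)$ and $g$ in $A$ coincide, whereas the $\lambda$-expansion gives $\ell_A(\phi(g)) = \lambda\, \ell_A(g)$, so $\lambda = 1$, a contradiction. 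This bounds the number of N-periodic paths concatenable in $\ell$. I expect the genuine obstacle to be the third paragraph: the step that a fully N-periodic leaf is actually periodic. Ruling out uniformly recurrent but aperiodic Nielsen lines is the delicate point, since it is the only step that uses the geometric structure of $\psi$ in an essential way; everything else is a routine combination of quasiperiodicity, bounded cancellation, and the $\lambda$-expansion of $f$.
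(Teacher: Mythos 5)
Your reduction via quasiperiodicity to the case where $\ell$ is entirely tiled by N-periodic paths is reasonable and close in spirit to the paper's own use of quasiperiodicity, but the step you yourself flag as delicate is a genuine gap, and it cannot be closed with the tools you invoke. Uniform recurrence of a bi-infinite line over finitely many tile types does not imply periodicity: substitution-generated lines --- of which the leaves of an expanding lamination are the standard examples --- are uniformly recurrent yet aperiodic. So nothing in ``finitely many orbit-types of tiles, $h^{k}$-invariance of the tiling up to equivalence, and quasiperiodicity'' pins the sequence of tile types down to a periodic one, and your final paragraph (the $\lambda$-expansion contradiction for an axis) never gets off the ground. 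Even granting periodicity, equivalence of the two leaves only gives that $\mathrm{axis}(\phi(g))$ is a translate of $\mathrm{axis}(g)$, hence that $\phi(g)$ is conjugate to some power $g^{\pm k}$; this yields $\lambda = k$ rather than $\lambda = 1$, so an additional argument would still be needed there.

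The paper disposes of the fully tiled case by a different mechanism, which is the missing idea. One first chooses a \emph{stable} train track representative (Collins--Turner, or Sykiotis), for which all indivisible N-periodic paths project to a single loop $P$ in the quotient graph of groups. If arbitrarily long concatenations of such paths occur in $\ell$, quasiperiodicity forces every leaf segment of $\ell$ to sit inside a translate of a concatenation $P_1\cdots P_n$ of paths projecting to $P$; hence the lamination is carried, in the sense of Section 6, by the subgroup of the fundamental group associated to the single loop $P$ (Section 2.7). By Proposition \ref{carries} such a subgroup must have finite index in $G$, which this loop subgroup does not have, and that is the contradiction --- no periodicity of $\ell$ is ever asserted or needed. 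If you wish to keep your outline, you must either import this ``carried by an infinite-index subgroup'' argument for the fully tiled case (note that the uniqueness of the indivisible Nielsen path for a stable representative is what makes all tiles project to one loop, so that a single proper subgroup carries everything), or supply an independent proof that a fully N-periodic leaf is an axis; the latter is not available.
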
	
\begin{proof}
Choose $f : T \rightarrow T$, stable train track representative (this is possible by \cite{CT}, since N-periodic paths correspond to Nielsen periodic paths in the quotient or see \cite{Syk} for a different approach), then there is exactly one path in $\Gamma = G/ T$ in which every (indivisible) N-periodic path projects.
We suppose that there is no bound in the number of concatenation of INP in $\ell$. So by quasiperiodicity we have that every leaf segment is contained in some concatenation of equivalent paths of the form $P_1P_2...P_n$ (where every  $P_i$ is a path that projects to the loop $P$). But then the subgroup that is constructed by the graph of groups corresponding to this loop (see the section 2.6. of the preliminaries), carries the lamination and therefore has finite index (by \ref{carries}) in $G$, which is impossible.
\end{proof}

Therefore the lemma \ref{legal} is true, in this case, if we restrict to $h \in Stab(\Lambda)$ and $\ell$ some leaf of the lamination.
\begin{defin}
We say that a sequence $\{ \Lambda_{i} \}$ of irreducible laminations in $\mathcal{ IL}$ if for some (any) tree $H$ every leaf segment of $\Lambda$ in
$S$ - coordinates is a leaf segment of $\Lambda_{i}$ in $S$ -coordinates for all but finitely many $i$.
\end{defin}

\begin{prop}
Let $\Lambda = \Lambda_{\phi}^{+} \in \mathcal{IL}$ and let $\psi \in Aut(G, \mathcal{O})$ which is an IWIP.
Suppose that $\psi \in Stab(\Lambda)$, then $\Lambda = \Lambda_{\psi}^{+}$ or $\Lambda = \Lambda_{\psi}^{-}$.
\end{prop}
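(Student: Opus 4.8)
The plan is to run the legal-leaf dichotomy of Lemma~\ref{legal} on a single leaf of $\Lambda$ and then to identify the resulting lamination by comparing quasiperiodic leaves. First I would record the setup: since $\psi$ is an IWIP so is $\psi^{-1}$ (invariance of a free factor system passes to inverses), so $\psi^{-1}$ has a well-defined stable lamination $\Lambda_{\psi}^{-}=\Lambda_{\psi^{-1}}^{+}$. Fix train track representatives $h:S\to S$ of $\psi$ and $h':S'\to S'$ of $\psi^{-1}$, with expansion factors $>1$ and critical constants $C_{crit}(h),C_{crit}(h')$. Because $Stab(\Lambda)$ is a group we have $\psi^{\pm1}\in Stab(\Lambda)$, and hence, by the description of the action in Section~5, $[h^{k}(\ell)]$ and $[h'^{k}(j')]$ (where $j'=[\tau\ell]$ is $\ell$ transported to $S'$-coordinates) are again leaves of $\Lambda$ for every $k$ and every leaf $\ell$. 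Finally, by the lemma immediately preceding this proposition, a leaf $\ell$ of $\Lambda$ cannot contain arbitrarily long concatenations of $N$-periodic subpaths, so the hypothesis of Lemma~\ref{legal} is met and I may apply it to $j=\ell$.

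Next I would apply Lemma~\ref{legal} to $j=\ell$ with $C$ chosen larger than both critical constants, which gives two alternatives. In case (A), $[h^{M}(\ell)]$ contains a $\psi$-legal segment of length $>C>C_{crit}(h)$; since $[h^{M}(\ell)]$ is still a leaf of $\Lambda$, the critical-constant property of Section~2.4 shows that $[h^{kM}(\ell)]$ contains $\psi$-legal segments whose length tends to infinity with $k$. Each such long legal segment is, up to bounded error near its ends, a subsegment of an $h$-iterate of an edge, hence a leaf segment of $\Lambda_{\psi}^{+}$ by the basic properties of the lamination (edges are leaf segments, and iterates and subsegments of leaf segments are again leaf segments). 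Thus the leaf $[h^{kM}(\ell)]$ of $\Lambda$ and a suitable leaf of $\Lambda_{\psi}^{+}$ share common leaf segments of unbounded length.

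I would then invoke the comparison argument already used to prove that the stable lamination is independent of the chosen train track representative: two quasiperiodic leaves sharing arbitrarily long common leaf segments are equivalent. Since every leaf of $\Lambda$ and every leaf of $\Lambda_{\psi}^{+}$ is quasiperiodic, the previous step forces a leaf of $\Lambda$ to be equivalent to a leaf of $\Lambda_{\psi}^{+}$, and therefore $\Lambda=\Lambda_{\psi}^{+}$. Case (B) is handled symmetrically: running the identical reasoning with $h'$, the transported leaf $j'$ and $\psi^{-1}\in Stab(\Lambda)$ in place of $h$, $\ell$ and $\psi$ produces long $\psi^{-1}$-legal leaf segments common to $\Lambda$ and $\Lambda_{\psi^{-1}}^{+}$, whence $\Lambda=\Lambda_{\psi^{-1}}^{+}=\Lambda_{\psi}^{-}$.

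The main obstacle I anticipate is twofold, and both parts are already defused by earlier results. The geometric ($GC$) difficulty---that Lemma~\ref{legal} was stated under an $NGC$-type bound on concatenations of $N$-periodic paths---is removed by the lemma guaranteeing that a leaf of $\Lambda$ cannot contain long concatenations of $N$-periodic subpaths (otherwise a loop subgroup would carry $\Lambda$ and, by Proposition~\ref{carries}, have finite index, which is impossible). The second delicate point is verifying that the long legal segments produced in case (A) are \emph{genuine} leaf segments of $\Lambda_{\psi}^{+}$ rather than merely legal paths; here I rely on tracing them back to $h$-iterates of edges together with the quasiperiodicity of the lamination, so that the final conclusion is an equivalence of leaves and not just a one-sided inclusion of leaf segments.
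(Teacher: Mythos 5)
Your proposal is correct and follows essentially the same route as the paper: apply Lemma~\ref{legal} to a leaf of $\Lambda$ (using the preceding lemma on bounded concatenations of N-periodic subpaths to cover the $GC$ case), use the critical constant to promote a single long legal segment into arbitrarily long common leaf segments with $\Lambda_{\psi}^{+}$ (resp.\ $\Lambda_{\psi}^{-}$ via $h'$ and the transported leaf), and conclude by quasiperiodicity. The only cosmetic difference is that you invoke the dichotomy once on $\ell$ and iterate $h'$ forward in case (B), whereas the paper applies it to every $[h^{K}\ell]$ and pulls back through the bounded-error identity $[\tau\ell]=[h'^{K}\tau h^{K}\ell]$; both versions are equivalent.
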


We note again that if a segment contains a legal segment with length larger than $C_{crit}$ then the length of reduced iterates converge to infinity.

\begin{proof}
In the non-geometric case:\\
Using the notation of the previous lemmas. Let $\ell$ be a leaf of $\Lambda$ in the $S$ -coordinates.
We apply the lemma to $[h^K \ell]$ with $K >0$ and $C$ larger the critical constants of $h$ and $h'$.
If for some $K > 0$ (A) holds, then if follows from quasiperiodicity that the forward iterates weakly converges to $\Lambda_{\psi}^{+}$,
since we have that the length of reduced images converges to infinity and so we have arbitarily long legal segments
and the quasiperiodicity implies that some translation of every leaf segment is finally contained in the reduced images.\\
The remaining possibility is that $[\tau h^{K} \ell]$ contains an $S'$ legal segment of length $>C$ for all $K>0$.
But this means that $[\tau \ell]$ which equals to $[h'^{K} \tau h^{K} \ell]$ up to bounded error,
contains an arbitarily high $h'$-iterate of a legal segment and quasiperiodicity now implies that $\Lambda = \Lambda_{h}^{-}$.\\
\\
Now in the geometric case, we use the same argument but only for $h \in Stab(\Lambda)$ and we have the same result that $\Lambda = \Lambda_{h}^{\pm}$
\end{proof}

Note that we have proved that for automorphisms with the property NGC, it is true for every IWIP $\psi$ (relative to $\mathcal{O}$) either the forward $\psi$ -iterates of $\Lambda$ weakly converges to $\Lambda_{\psi} ^{+}$ or $\Lambda = \Lambda_{\psi}^{-}$.\\

\begin{cor}\label{Irred}
If $\psi \in Stab(\Lambda)$ is exponentially growing, then $\psi \notin Ker(\sigma)$
\end{cor}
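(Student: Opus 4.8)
The plan is to reduce to the two cases already handled in the preceding results, according to whether $\psi$ is an IWIP or not. Recall that $\sigma$ takes values in the multiplicative group $\mathbb{R}^+$, so $Ker(\sigma)$ is exactly the set of $\psi$ with $\sigma(\psi) = 1$; it therefore suffices to show that $\sigma(\psi) \neq 1$ for every exponentially growing $\psi \in Stab(\Lambda)$.

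First I would split off the non-IWIP case. If $\psi$ is not an IWIP, then by definition some power $\psi^{k}$ is reducible, and since $\psi$ is assumed exponentially growing, Proposition \ref{Redu} applies verbatim to give $\psi \notin Ker(\sigma)$. This disposes of everything except the case where $\psi$ is itself an IWIP, which is the real content of the corollary.

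For IWIP $\psi$ the key input is the last proposition, which forces $\Lambda = \Lambda_{\psi}^{+}$ or $\Lambda = \Lambda_{\psi}^{-}$. In the first case I would apply Lemma \ref{lambda} to a train track representative $h$ of $\psi$: since $\Lambda_{\psi}^{+}$ is the stable lamination of $h$ itself, its leaf segments are legal, so $h$ sends them to reduced legal paths scaled by exactly the expansion factor $\lambda > 1$, with no cancellation. Consequently the Perron--Frobenius average $\frac{\sum r_e l_e^{h}}{\sum r_e l_e}$ computed in Lemma \ref{lambda} collapses to $\lambda$, giving $\sigma(\psi) = \lambda > 1 \neq 1$. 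In the second case $\Lambda = \Lambda_{\psi}^{-} = \Lambda_{\psi^{-1}}^{+}$ is the stable lamination of $\psi^{-1}$, which is again an exponentially growing IWIP; the same computation applied to $\psi^{-1}$ yields $\sigma(\psi^{-1}) = \lambda' > 1$, and since $\sigma$ is a homomorphism, $\sigma(\psi) = \sigma(\psi^{-1})^{-1} = 1/\lambda' < 1 \neq 1$. Either way $\sigma(\psi) \neq 1$.

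The main obstacle is conceptual rather than computational: one must confirm that the abstractly defined stretching number $\lambda(h,\Lambda)$ genuinely coincides with the train track expansion factor when $\Lambda$ is $\psi$'s own stable lamination. This is precisely where the uniform stretching property of train track maps (legal paths, in particular edges, are scaled by $\lambda$ with no cancellation) feeds into the Perron--Frobenius average of Lemma \ref{lambda}, and where one crucially uses that an exponentially growing IWIP has $\lambda > 1$ \emph{strictly}, so that neither $\lambda$ nor $1/\lambda'$ can be equal to $1$.
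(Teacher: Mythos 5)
Your proposal is correct and follows essentially the same route as the paper: the non-IWIP case is delegated to Proposition \ref{Redu}, and the IWIP case uses the preceding proposition to identify $\Lambda$ with $\Lambda_{\psi}^{\pm}$ and then evaluates $\sigma(\psi)$ via Lemma \ref{lambda} with the train track representative itself, obtaining the Perron--Frobenius eigenvalue $\lambda>1$. Your explicit handling of the $\Lambda=\Lambda_{\psi}^{-}$ case via $\sigma(\psi)=\sigma(\psi^{-1})^{-1}$ is just an unpacking of the paper's ``changing $\psi$ with $\psi^{-1}$ if necessary.''
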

\begin{proof}
For reducible automorphisms, we have already proved it in \ref{Redu}.\\
For irreducible ones, we have by the previous proposition that $\Lambda = \Lambda_{\psi}^{+}$ (changing $\psi$ with $\psi ^{-1}$, if it necessary) and so we can choose $f = h$, where $h$ is the train track representative of $\psi$, in the proof of \ref{lambda}, and then $\sigma(\psi)$ is obviously equal to the Perron - Frobenious eigenvalue which is greater than $1$, since $\psi$ is exponentially growing(it is an IWIP).
\end{proof}

\subsection{Discreteness of the Image}

We will prove that the image of the homomorphism $\sigma$ is discrete and therefore we can see $\sigma$ as a homomorphism $\sigma : Stab(\Lambda) \rightarrow \mathbb{Z}$.

\begin{lemma}
	$\sigma(Stab(\Lambda))$ is a discrete set.
\end{lemma}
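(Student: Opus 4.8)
Since $\sigma : Stab(\Lambda) \to \mathbb{R}^+$ is a homomorphism (into the multiplicative group of positive reals), its image $\sigma(Stab(\Lambda))$ is a subgroup of $\mathbb{R}^+$. Passing through $\log$, a subgroup of $\mathbb{R}^+$ is either discrete (infinite cyclic or trivial) or dense, and a dense subgroup contains infinitely many points in every interval $(1,c]$. Hence it suffices to prove the following finiteness statement: for every $c>1$ the set $\sigma(Stab(\Lambda)) \cap (1,c]$ is finite. The plan is to show that each element of this set is forced to be a Perron--Frobenius eigenvalue of a nonnegative integer matrix of \emph{uniformly bounded} size and of spectral radius at most $c$, and then to count such eigenvalues.

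\textbf{Identifying $\sigma(\psi)$ as a Perron--Frobenius eigenvalue.} Let $\psi \in Stab(\Lambda)$ with $\sigma(\psi)>1$. First note that if $\psi$ were non-exponentially growing, then some relative train track representative would, up to a power, act as a permutation of the edges of each stratum, so leaf lengths of $\Lambda$ would not grow and we would have $\sigma(\psi)=1$; thus $\psi$ is exponentially growing. By Proposition \ref{Redu} (and Corollary \ref{Irred} in the irreducible case), a relative train track representative $h : S \rightarrow S$ of $\psi$ has all strata below the top non-exponentially growing and acting as permutations up to a power, while the top stratum $H_{top}$ is exponentially growing with irreducible transition matrix $M_\psi$, and $\sigma(\psi)$ equals the Perron--Frobenius eigenvalue of $M_\psi$. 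Here one uses that $\Lambda$-leaves decompose along the filtration and that their asymptotic growth rate is governed precisely by $H_{top}$. The key structural input is that the number of $G$-orbits of edges of $S$ is bounded above by a constant $N_0$ depending only on $\mathcal{O}$ (equivalently on the Kurosh rank of $G$), since all trees in $\mathcal{O}$ have quotient graphs of groups of bounded complexity and we may take $h$ to be a reduced representative; consequently $M_\psi$ is a nonnegative integer matrix of size at most $N_0$.

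\textbf{Counting eigenvalues and concluding.} Fix $c>1$ and suppose $\sigma(\psi)=\mu \in (1,c]$. Since $M_\psi$ is nonnegative, its spectral radius equals its Perron--Frobenius eigenvalue $\mu$, so \emph{every} eigenvalue of $M_\psi$ has modulus at most $\mu \leq c$. The characteristic polynomial of $M_\psi$ is therefore a monic integer polynomial of degree at most $N_0$ whose coefficients, being the elementary symmetric functions of roots of modulus $\leq c$, are bounded in absolute value by $\binom{N_0}{k}c^k$. There are only finitely many such integer polynomials, hence only finitely many possible values of $\mu$; this shows $\sigma(Stab(\Lambda)) \cap (1,c]$ is finite for every $c$, and the reduction above gives that $\sigma(Stab(\Lambda))$ is discrete. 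Since $\phi \in Stab(\Lambda)$ has $\sigma(\phi)=\lambda>1$, the image is a nontrivial discrete subgroup, hence infinite cyclic, and we may regard $\sigma$ as a homomorphism onto $\mathbb{Z}$.

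\textbf{Main obstacle.} The routine steps are the eigenvalue count and the subgroup dichotomy; the delicate point is the middle paragraph, namely securing the \emph{uniform} bound $N_0$ on the number of edge-orbits over all relative train track representatives arising from $Stab(\Lambda)$, and verifying that $\sigma(\psi)$ is genuinely the spectral radius of the top-stratum block (so that all its eigenvalues are controlled by $\sigma(\psi)$). Both rely on the structure of $\mathcal{O}$ and on Proposition \ref{Redu}, so I would state the complexity bound carefully, reducing to reduced representatives to rule out spurious valence-one or valence-two free vertices.
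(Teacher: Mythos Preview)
Your proposal is correct and follows essentially the same route as the paper: identify every $\sigma(\psi)\neq 1$ with the Perron--Frobenius eigenvalue of an irreducible nonnegative integer matrix of size bounded by the complexity of $\mathcal{O}$, and then invoke the discreteness of such eigenvalues. The paper compresses the last step into the phrase ``it is well known,'' whereas you spell it out via the dense/discrete dichotomy for subgroups of $\mathbb{R}^+$ together with the finiteness of monic integer polynomials of bounded degree whose roots all lie in a disc of radius $c$; this is exactly the standard justification of that ``well known'' fact, so there is no genuine divergence in strategy.
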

\begin{proof}
	This is true since by the proofs of the propositions (\ref{Redu}, \ref{Irred}), every $\sigma(\psi)$ other than 1, occurs as the Perron- Frobenius eigenvalue for an irreducible integer matrix of uniformly bounded size. It is well known then that the set of such numbers form a discrete set and as a consequence $\sigma(Stab(\Lambda))$ is an infinite discrete subset of $\mathbb{R}$ and is hence isomorphic to $\mathbb{Z}$.
\end{proof}

\section{Main Results}
In this section, we will state and prove the main theorems. We use the same notation as in the sections above.

\begin{lemma}
Let $h: S \rightarrow S$ be a relative train track representative of $\psi \in Ker(\sigma)$. Then there is some $k $ such that $h^k $ induces the identity on $G \backslash S$. Moreover, there are appropriate representatives of orbits of non -free vertices $v_1, ..., v_q$, such that $h(v_i) = v_i$. In particular, $h^k$ fixes $S$.
\end{lemma}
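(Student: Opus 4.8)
The whole plan is to extract the three conclusions from a single relative train-track representative $h:S\to S$ of $\psi$, using in an essential way that membership in $Ker(\sigma)$ forces $\psi$ to be non-exponentially growing. First I would record this: by Corollary \ref{Irred} (together with Proposition \ref{Redu}) every exponentially growing element of $Stab(\Lambda)$ has $\sigma$-value strictly greater than $1$, so an element of $Ker(\sigma)$ cannot be exponentially growing. Hence, in the filtration $T_0\subseteq\cdots\subseteq T_k=S$ carried by $h$, every stratum is non-exponentially growing, and by the description recalled in Section 2.6 the map $h$ sends each edge $e$ of a stratum $H_r$ to a single edge of $H_r$ (in the same orbit, after replacing $h$ by a power) followed by a path contained in the lower strata $T_{r-1}$.

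For the first assertion I would show that $h$ induces the identity on $\Gamma=G\backslash S$ after passing to a power. Since each stratum is non-exponentially growing, the leading-edge assignment sending $e$ to the first edge of $[h(e)]$ is a permutation of the finitely many orbits of edges of $\Gamma$; replacing $h$ by a suitable power we may assume every edge orbit, with its orientation, is fixed, so that $[h(e)]=e\cdot u_e$ with $u_e$ a path in the lower strata. It remains to kill the decorations $u_e$, and for this I would invoke Proposition \ref{invariant subgraph}: applied to the proper invariant subgraphs $T_r$ of the filtration it gives, after a further power, that $h$ restricted to each $T_r$ induces the identity on $G\backslash T_r$; running this up the filtration from $T_0$ trivialises the $u_e$ for the edges of every stratum. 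Thus some $h^k$ induces the identity on $G\backslash S$.

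For the second assertion, recall that $T_0$ contains all non-free vertices and that there are exactly $q$ orbits of them, namely $Gv_1,\dots,Gv_q$ with $Stab(v_i)$ conjugate to $G_i$. Because $h^k$ induces the identity on $\Gamma$ it fixes each such orbit, i.e.\ $h^k(v_i)=g_iv_i$ for some $g_i\in G$. I would then upgrade ``fixes the orbit'' to ``fixes a representative'': using that each $G_i$ is self-normalising, $N_G(G_i)=G_i$, the class $\psi^k(G_i)$ is a single well-defined conjugate $g_iG_ig_i^{-1}$, and one chooses $b_i\in G$ with $b_i^{-1}\psi^k(b_i)g_i\in G_i$ and replaces $v_i$ by $b_iv_i$, obtaining $h^k(v_i)=v_i$. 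Equivalently $\psi^k$ now preserves the actual subgroup $G_i$, not merely its conjugacy class.

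For the last assertion I would add the hypothesis $\psi\in Out(G,\{G_i\}^{t})$, so that the image of $\psi$ in each $Out(G_i)$ is trivial; combined with $h^k(v_i)=v_i$ (hence $\psi^k(G_i)=G_i$), this means $\psi^k$ restricts on each $G_i$ to an inner automorphism $x\mapsto c_ixc_i^{-1}$ with $c_i\in G_i$. At this point $h^k$ fixes every vertex and edge of $\Gamma$, the edge groups are trivial (so there is no room for Dehn-twist contributions), and the action on the free part is trivial because $h^k$ is the identity on $\Gamma$; the only surviving data are the inner automorphisms of the vertex groups. The goal is then to see that $\psi^k$ is inner, hence trivial in $Out(G)$, giving $\psi$ finite order. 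I expect this final step to be the main obstacle: the elements $c_i\in G_i$ a priori define partial conjugations of the vertex groups, and such a partial conjugation by a vertex-group element is in general an infinite-order, non-inner automorphism that still lies in $Ker(\sigma)\cap Out(G,\{G_i\}^{t})$, so it cannot be ruled out from the combinatorics of $h$ alone. The delicate point is to feed back the hypothesis that $\psi$ stabilises the (filling) lamination $\Lambda$ of the ambient IWIP: preservation of $\Lambda$ by $\psi^k$, together with the rigidity coming from the fixed representatives $v_i$ and the triviality of the edge groups, is what must constrain the $c_i$ enough to conclude that $\psi^k$ is inner and therefore that $\psi$ has finite order.
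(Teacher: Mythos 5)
Your reduction to the non-exponentially-growing case and your handling of the second assertion (upgrading ``fixes the orbit of $v_i$'' to ``fixes a representative'' via self-normalisation of the $G_i$) are in order, but there are two genuine gaps. First, for the claim that $h^k$ induces the identity on $G\backslash S$, you cannot dispose of the suffixes $u_e$ by ``running Proposition \ref{invariant subgraph} up the filtration'': that proposition applies only to \emph{proper} invariant subgraphs, so it says nothing about the top stratum, and knowing that $h$ is the identity on $G\backslash T_{k-1}$ is perfectly compatible with $h(e)=e\cdot u_e$ for a nontrivial path $u_e\subseteq T_{k-1}$ (a Dehn-twist-like map). The paper's proof needs a dedicated lamination argument here: if $u_e$ is nontrivial then the $h$-iterates of $e$ produce arbitrarily long leaf segments inside the proper subgraph $S_0$, which contradicts quasiperiodicity unless the leaf has the special form $\dots ea^{b_0}e^{-1}x_0ea^{b_1}e^{-1}\dots$; and in that exceptional case the lamination is carried by a visibly infinite-index subgroup, contradicting Proposition \ref{carries}. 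Without this step your first conclusion is not established.

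Second, you correctly identify the final assertion as the main obstacle but then leave it open, and the worry you raise (partial conjugations by vertex-group elements lying in $Ker(\sigma)\cap Out(G,\{G_i\}^{t})$) is exactly what the remaining argument in the paper is designed to kill. The mechanism is again the lamination: writing $h(e_1)=e_1$ and $h(e_2)=g_0e_2$ at a non-free vertex $v$, the iterated images of a two-edge leaf segment $e_1ge_2$ are turns $e_1g_ne_2$ with $g_n$ built from $\psi$-iterates of $g_0$ and $g$; since $\psi\in Stab(\Lambda)$ these are all leaf segments, and quasiperiodicity forces the set of orbits of two-edge leaf segments to be finite, so the sequence $g_n$ is eventually periodic. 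After passing to a further power and replacing $e_2$ by $ge_2$ (changing the fundamental domain) one gets $h$ fixing the fundamental domain pointwise, whence $h$ sends a path $g_1e_1\cdots g_me_m$ to $\psi(g_1)e_1\cdots\psi(g_m)e_m$ and is determined by the induced factor automorphisms alone; if $\psi\in Out(G,\{G_i\}^{t})$ these are trivial, so $h^k$ is the identity and $\psi$ has finite order. This is precisely the step you would need to supply; it also rules out your partial-conjugation counterexamples, since such a twist would generate infinitely many inequivalent two-edge leaf segments.
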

\begin{proof}
	Let $\psi \in Ker(\sigma)$ and $h': S \rightarrow S$ be a RTT train track representative of $\psi$.\par
	By \ref{Irred}, possibly after changing $\psi$ with some iterate $\psi ^{k}$, we can suppose that there is a relative train track representative, $h'^{k} = h : S \rightarrow S$ and a maximal proper $h$-invariant $G$-subgraph $S_0$ of $S$ (we denote by $H_0$ the quotient $S_0/ G$) s.t. the restriction of $h$ on $S_0$ induces the identity in $H_0$. 
	For the top stratum we can suppose that it contains a single edge $e$ and that $h(e) = e a$, where $a$ is some segment of $S_0$ (since it is non-exponentially growing). 
	But then since $h$ is a relative train track and $h \in Stab(\Lambda)$, we have that $h$-iterates of $e$ produces arbitarily long segments of the lamination that are contained in $S_0$ which contradicts quasiperiodicity, except if the leaf of the lamination is of the form (in the quotient, so every $a,e$ correspond to orbits):
	\begin{equation*}
	...ea^{b_{-1}}e^{-1}x_{-1}ea^{b_0}e^{-1}x_0ea^{b_1}e^{-1}x_1ea^{b_2}e^{-1}...
	\end{equation*}
	for some integers $b_i$ and $x_i$ are contained in $S_{0}$ (or $H_0$ in the quotient).
	In this case, the lamination is carried by the subgroup which is the fundamental group of the graph of groups which consists of the disjoint union of two graph of groups corresponding to $H_0$ (which contains all the non-free vertices with full stabilisers) that are joined by an edge corresponding to $e$. But by \ref{carries}, this leads to a contradiction since it is obvious that this subgroup is not of finite index (and by construction it contains the full stabilisers of vertices).
	Therefore we have that $h(e) = e$ and then $h$ induces also the identity on $\Gamma - H_0$ ($\Gamma = G / T$) and so on $\Gamma$.\\
	Now suppose $h(e_1) = e_1, h(e_2) = g_0 e_2$, where $g_0 \in G_v$ as above, where $e_1 e_2$ is a legal path. Then since $h$ is a isometry we have that we will have as leaf segments of the form $e_1 g_n e_2$ where $g_n = \psi^n (g_1)$. But since there are finitely many inequivalent paths of a specific length, we can get that after passing some power if needed, that there is some $g \in G_v$ such that $h(ge_2) = g e_2 $ and after changing the fundamental domain (in particular, $e_2$ with $g e_2$), we have that $h$ fixes pointwise the fundamental domain. Since this can be done for every vertex we have that we can suppose that every edge of the fundamental domain is fixed by $h$ (after possibly passing to some power). Therefore $h$ is an automorphism that sends a path of the form $g_1 e_1, ..., g_m e_m$ to the path $\psi(g_1) e_1, ..., \psi(g_m) e_m$ where $g_i, \psi(g_i) \in G_{\o(e_i)}$, and as a consequence $h$ depends only on the induced automorphisms on each $G_i$.
\end{proof}

 As conclusion of the discussion above, if we denote the subgroup $ker(\sigma)$ by $A$, we get:
\begin{theorem}
	Then there is normal subgroup $A$ of $Stab(\Lambda)$, where $Stab(\Lambda)$ a cyclic extension of $A$. Moreover, every $\phi \in A$ virtually fixes a point of $\mathcal{O}$.
\end{theorem}
As a consequence, using the properties of $Out(G, \{ G_i \} ^{t})$ and the proof of the lemma above we get:

\begin{theorem}
	If every $Inn(G_i)$ is finite, then:
	\begin{enumerate}
		\item There is a normal periodic subgroup $A$ of $Stab(\Lambda)$, such that the group $Stab(\Lambda) / A$ has a normal subgroup $B$ isomorphic to subgroup of $\bigoplus \limits_{i=1} ^{q} Out(G_i)$ and $(Stab(\Lambda) / A) / B$ is isomorphic to $\mathbb{Z}$.
		\item Let's also suppose that $Out(G)$ is virtually torsion free. Then $Stab(\Lambda)$ has a (torsion free) finite index subgroup $K$ such that $K / B'$ is isomorphic to $\mathbb{Z}$, where $B'$ is a normal subgroup of $K$ isomorphic to subgroup of $\bigoplus \limits _{i=1} ^{q} Out(G_i)$.
	
	\end{enumerate}
\end{theorem}
Finally, we have that:
\begin{theorem}
	If every $Aut(G_i)$ is finite and $Out(G)$ is virtually torsion free, then $Stab(\Lambda)$ is virtually infinite cyclic.
\end{theorem}

One could possibly expect that actually every automorphism of the kernel of the stretching map, virtually fixes the same point of $\mathcal{O}$. However, our attempts to prove it using train track methods or providing a counter-example were unsuccessful. If this fact is true, we could improve our results, as the stabilisers of points can be described in terms of the automorphisms groups of the $G_i$'s (see \cite{GF}).\\

A direct corollary of the previous theorem is the following. Let's denote $C(\phi)$ the relative centraliser of $\phi$ in $Out(G, \mathcal{O})$. As we have seen, $C(\phi)$ is a subgroup of $Stab(\Lambda)$ and so we get:

 \begin{theorem}
 	Then there is normal subgroup $B$ of $C(\phi)$, where $C(\phi)$ is a cyclic extension of $A$. Moreover, every $\phi \in B$ virtually fixes a point of $\mathcal{O}$.\\
 	Moreover, if every $Inn(G_i)$ is finite, then:
 	\begin{enumerate}
 		\item There is a normal periodic subgroup $A_1$ of $C(\phi)$, such that the group $C(\phi) / A_1$ has a normal subgroup $B_1$ isomorphic to a subgroup of $\bigoplus \limits_{i=1} ^{q} Out(G_i)$ and $(C(\phi) / A_1) / B_1$ is isomorphic to $\mathbb{Z}$.
 		\item Let's also suppose that $Out(G)$ is virtually torsion free. Then $C(\phi)$ has a (torsion free) finite index subgroup $K_1 '$ such that $K_1 ' / B_1 '$ is isomorphic to $\mathbb{Z}$, where $B_1 '$ is a normal subgroup of $K_1 '$ isomorphic to subgroup of $\bigoplus \limits _{i=1} ^{q} Out(G_i)$.
 	\end{enumerate}
 \end{theorem}
Also, we have that:
\begin{theorem}
If we further suppose that every $Aut(G_i)$ is finite and $Out(G)$ is virtually torsion free, then $C(\phi)$ is virtually (infinite) cyclic. 
\end{theorem}

Note that in the case in which $\mathcal{O}$ corresponds to the Grushko decomposition of $G$, we have that the previous theorem generalises the theorem in the classical case that the centraliser of an IWIP (for f.g. free groups with the absolute notion of irreducibility) is virtually cyclic since there are no $G_i$'s and so the factor automorphisms are trivial in the free case. Additionally, we can take also relative results for the free and for the general case. This is possible since we can use the fact that every automorphism is irreducible relative to some appropriate space.\\
Moreover, note that if $\phi$ doesn't commute with the automorphisms of the free factors then $C(\phi)$ is virtually cyclic. But as we will see, in the general case there are examples that this is not true. In particular, we can find centralisers of IWIP automorphisms (relative to some space) which contain big subgroups and as a consequence they are not virtually cyclic.\\

In fact, we can get something stronger than the previous theorem. Remember that if $G$ is a group and $H$ is a subgroup of $G$, the commensurator (or
virtual normalizer) of $H$ in $G$ is the subgroup $Comm_G (H) =: \{ g \in G | [H : H \cap g^{-1}
Hg] < \infty $, and $[g^{-1}Hg : H \cap g^{-1} H g] < \infty \}$. Here we have that the commensurator $Comm_ {Out(G, \mathcal{O})}( \phi)$ contains $C_{Out(G,\mathcal{O})} (\phi)$ for every automorphism $\phi$. But for every IWIP $\phi$ the subgroup $Comm_ {Out(G, \mathcal{O})}( \phi)$ stabilises the lamination, since for $\psi \in Comm_ {Out(G, \mathcal{O})}( \phi)$ there are $n,m$ such that $\psi \phi ^{m} \psi ^{-1} = \phi ^{n} $, we get a similar statement as above for commensurators of IWIP automorphisms instead of centralisers.

Now let's give an example of a relative IWIP which has (relative) centraliser which fails to be virtually cyclic.
 \begin{exa}
As in the introduction, we fix the free product decomposition $G = G_1 \ast <a> \ast <b>$, where $a,b$ are of infinite order and we denote by $F_2 = <a> \ast <b>$ the "free part". Then in the corresponding outer space $\mathcal{O}(G, G_1, F_2)$, which we denote by $\mathcal{O}$. In each tree $T \in \mathcal{O}$ there is exactly one non free vertex $v_1$ s.t $G_{v_1} = G_1$. Then we define the outer automorphism $\phi$, which satisfies $\phi(g) = g$ for every $g \in G_1$, $\phi(a)= bg_1, \phi(b) = a b$ for some non-trivial $g_1 \in G_1$, then $\phi \in Out(G, \mathcal{O})$ is an IWIP relative to $\mathcal{O}$. But every factor automorphism of $G_1$ that fixes $g_1$ commutes with $\phi$ and therefore $C(\phi)$ contains the subgroup $A$ of  $Aut(G_1)Inn(G)$ that fixes $g_1$. So the centraliser is not virtually cyclic if $A$ is sufficiently big.\\
We will prove that $\phi$ is an IWIP relative to $\mathcal{O}$. Firstly, note that there are no $\phi$ -invariant free factor systems of the form $\{ [G_1], [<b>] \}$ or $\{ <G_1,b> \}$ that contain the free factor system $\{ [G_1] \} $. So the only possible case is the case where there is a $\phi$-invariant free factor system of the form $\{ [G_1], [<x , y>] \}$. Using the fact that we have two free factors, we can assume that the free factors $G_1$ and $<x,y>$ are actually $\phi$-invariant. Therefore after possibly changing the basis we can suppose that the projection map from $G$ to $G / << G_1>> = <a, b>$ sends $x,y$ to $a,b$, respectively. Moreover, we can see that $x = a m$, $y = b n$, where $m,n \in << G_1 >>$. By the relations, $\phi(G_1) = G_1$ and $\phi(<x,y>) = <x,y>$, we have that $\phi$ induces the identity on $G_1$ (after possibly conjugacy with an element of $G_1$). In the first case, we can see that $\phi(x) = x y$ and $\phi(y) = x$. Then we get the identities $ (am)(bn) = ab(\phi(m))$ and $am = a g_1 \phi(n)$. By combining these together, we have that $m b g_1 ^{-1} \phi^{-1} (m) = b \phi(m)$ which easily leads to a contradiction to the fact that $m \in <<G_1>>$. Similarly, we get a contradiction in the second case. Therefore there is no such a $\phi$-invariant free factor system.\\
In the case that $G_1$ is isomorphic to $F_3$ and $g_1$ an element of its free basis, we have that $C(\phi)$ contains a subgroup which is isomorphic to $Aut(F_2)Inn(G)$.
 \end{exa}

\newpage

{\small SCHOOL OF MATHEMATICS, UNIVERSITY OF SOUTHAMPTON, HIGHFIELD, SOUTHAMPTON,
SO17 1BJ, UNITED KINGDOM.
\\
\textit{E-mail address}: D.Syrigos@soton.ac.uk
}


\begin{thebibliography}{9}


\bibitem{BFM} Mladen Bestvina, Mark Feighn and Michael Handel, \textit{Laminations, trees, and irreducible automorphisms of free groups}, Geometric and Functional Analysis GAFA, May 1997, Volume 7, Issue 2, pp 215-244
		
\bibitem{BFMH1} Mladen Bestvina, Mark Feighn, and Michael Handel, \textit{The Tits alternative for Out (Fn) I: Dynamics of exponentially-growing automorphisms}, Annals of Mathematics-Second Series 151.2 (2000): 517-624.

\bibitem{BFMH2} Mladen Bestvina,  Mark Feighn, and Michael Handel. \textit{The Tits Alternative for II: A Kolchin Type Theorem}, Annals of mathematics (2005): 1-59.

\bibitem{BH} Mladen Bestvina and Michael Handel, \textit{Train tracks and automorphisms of free groups. Annals of Mathematics} (2), vol. 135 (1992), no. 1, pp. 1–51

\bibitem{CT} D. J. Collins. and  E.C. Turner, \textit{Efficient representatives for automorphisms of free products}, The Michigan Mathematical Journal 41 (1994), no. 3, 443--464. doi:10.1307/mmj/1029005072. http://projecteuclid.org/euclid.mmj/1029005072.

\bibitem{CHL1} Thierry Coulbois and Arnaud Hilion and Martin Lustig,
\textit{R-trees and laminations for free groups I: Algebraic laminations}, J. Lond. Math. Soc, (2008), No. 3, 723--736

\bibitem{CHL2}  Thierry Coulbois and Arnaud Hilion and Martin Lustig, \textit{$ \mathbb{R}$-trees and laminations for free groups II: The dual lamination of an $\mathbb{R}$-tree}, J. Lond. Math. Soc., 78 (2008), No. 3, 737—754.


\bibitem{CV} Marc Culler and Karen Vogtmann, \textit{Moduli of graphs and automorphisms of free groups}, Invent. Math., 84(1):91–119, 1986.

\bibitem{FH} Mark Feighn and Michael Handel, \textit{Abelian subgroups of} $Out(F_n)$, Geometry and Topology 13.3 (2009): 1657-1727.

\bibitem{FM} Stefano Francaviglia and Armando Martino, \textit{Stretching factors, metrics and train tracks for free products}, arXiv:1312.4172, 2013

\bibitem{GF} Vincent Guirardel and Gilbert Levitt, \textit{The Outer space of a free product}, Proc. London Math. Soc. (3) 94 (2007) 695–714

\bibitem{HL} Michael Handel and Lee Mosher, \textit{Relative free splitting and free factor complexes I: Hyperbolicity}, 2014, arXiv:1407.3508

\bibitem{Hi} Arnaud Hilion,  \textit{On the Maximal Subgroup of Automorphisms of a Free Group FN which Fix a Point of the Boundary $\partial F_N$},
International Mathematics Research Notices, 01/2007; 2007(18). DOI: 10.1093/imrn/rnm066

\bibitem{H1} Camille Horbez, \textit{Hyperbolic graphs for free products, and the Gromov boundary of the graph of cyclic splittings}, arXiv:1408.0544v2 2014
\bibitem{H2} Camille Horbez, \textit{The boundary of the outer space of a free product}, arXiv:1408.0543v2,  2014

\bibitem{H3} Camille Horbez, \textit{The Tits alternative for the automorphism group of a free
	product}, arXiv:1408.0546, 2014



\bibitem{K} Ilya Kapovich, \textit{Algorithmic detectability of iwip automorphisms}, Bull. London Math. Soc. (2014) 46 (2): 279-290 January 9, 2014
	
\bibitem{KL1} Ilya Kapovich and Martin Lustig, \textit{Stabilisers of  $\mathbb{R}$-trees with isometric free actions}, Journal of Group Theory. Volume 14, Issue 5, Pages 673–694, ISSN (Online) 1435-4446, ISSN (Print) 1433-5883, DOI: 10.1515/jgt.2010.070, August 2011

\bibitem{K2} Ilya Kapovich and Martin Lustig, \textit{Invariant laminations for irreducible
automorphisms of free groups}, Q. J Math first published online January 30, 2014 doi:10.1093/qmath/hat056

\bibitem{L} Martin Lustig, \textit{Conjugacy and Centralizers for Iwip Automorphisms of Free Groups},
Geometric Group Theory, Trends in Mathematics, 2007,doi:10.1007/978-3-7643-8412-811, 197-224.

\bibitem{M} Erika Meucci, \textit{Relative Outer Automorphisms of Free Groups}, Ph.D. thesis, University of Utah (2011).

\bibitem{S} Jean-Pierre Serre, \textit{Trees}, Translated from the French original by John Stillwell. Corrected 2nd printing of the 1980 English translation. Springer Monographs in Mathematics. Springer-Verlag, Berlin (2003). APA	

\bibitem{Syk} Mihalis Sykiotis, \textit{Stable representatives for symmetric automorphisms of groups and the general form of the Scott conjecture}, Trans. Amer. Math. Soc. 356 (2004), no. 6, 2405–2441.

\bibitem{S1} Dionysios Syrigos, \textit{Asymmetry of outer space of a free product} arXiv:1507.01849, 2015

\bibitem{S2} Dionysios Syrigos, \textit{Stabiliser of an Attractive Fixed Point of an IWIP Automorphism}, in preparation.
	\end{thebibliography}
\end{document}